\def\th@plain{%
  \itshape 
}
\renewenvironment{proof}[1][\proofname]{\par
  \pushQED{\qed}%
  \normalfont \topsep6\p@\@plus6\p@\relax
  \trivlist
  \item[\hskip\labelsep
        \bfseries
    #1\@addpunct{.}]\ignorespaces
}{%
  \popQED\endtrivlist\@endpefalse
}
\numberwithin{equation}{section}
\newtheorem{thm}{Theorem}[section]
\newtheorem{cor}[thm]{Corollary}
\newtheorem{lem}[thm]{Lemma}
\newtheorem{defn}{Definition}
\newtheorem{pblm}[thm]{Problem}
\newtheorem{obs}{Observation}
\newtheorem{rem}{\bf{Remark}}
\numberwithin{equation}{section}
\numberwithin{equation}{section}
\newcommand{\defi}{\mathrm{def}_i}
\begin{document}
\title{\LARGE Defective incidence coloring of graphs 
\thanks{Supported by the National Natural Science Foundation of China (No.\,11871055).}
\thanks{Mathematics Subject Classification (2010): 05C15, 68R10}
}
\author{Huimin Bi~~~~~~ Xin Zhang\thanks{Corresponding author. Email: xzhang@xidian.edu.cn.}\\
{\small School of Mathematics and Statistics, Xidian University, Xi'an, 710071, China}}


\maketitle

\begin{abstract}\baselineskip 0.60cm

We define the $d$-defective incidence chromatic number of a graph, generalizing the notion of incidence chromatic number,
and determine it for some classes of graphs including trees, complete bipartite graphs, complete graphs, and outerplanar graphs.
Fast algorithms for constructing the optimal $d$-defective incidence colorings of those graphs are presented.

\vspace{3mm}\noindent \emph{Keywords: incidence coloring; defective coloring; Latin square; outerplanar graph; polynomial-time algorithm}.
\end{abstract}

\baselineskip 0.60cm

\section{Introduction}\label{sec:1}

The incidence coloring of graphs, introduced by Brualdi and Quinn Massey \cite{MR1246668} in 1993, has been attracting attention of many researchers (see an online survey of Sopena \cite{Sopena} for the recent progresses of the study of the incidence coloring).
This coloring has many applications in theoretical computer science and information science as it can model the multi-frequency assignment problem where each transceiver can be simultaneously in sending and receiving modes \cite{MR4285045}.

Formally, let $G$ be a graph with vertex set $V(G)$ and edge set $E(G)$. An \textit{incidence} of $G$ is a vertex-edge pair $(v,e)$ such that the vertex $v$ is incident with the edge $e$. For a vertex $u\in V(G)$ and its neighbor $v$ in $G$ (say $v\in N_G(u)$), the incidence $(u,uv)$ is a \textit{strong incidence} of $u$, and the incidence $(v,uv)$ is a \textit{weak incidence} of $u$. We use $I_u$ and $A_u$ to denote the set of strong incidences and weak incidences of $u$, respectively.

Brualdi and Quinn Massey \cite{MR1246668} defined two \textit{adjacent incidences} as $(u , e)$ and $(w , f)$ such that
$u=w$ or $uw\in \{e,f\}$ (it may happen that $e=f$). They also defined the following.
\begin{defn}\label{def:1}
A \textit{proper incidence $k$-coloring} of $G$ is a mapping $\varphi$ from the set $I(G)$ of all incidences of $G$ to the set $[k]:=\{1,2,\ldots,k\}$ of integers in such a way that two adjacent incidences receive different colors.
\end{defn}

We think about the proper incidence coloring in another view of point, by giving an equivalent definition as follows.
\begin{defn}\label{def:2}
A proper incidence $k$-coloring of $G$ is a mapping $\varphi:I(G) \longrightarrow [k]$ 
such that the following conditions hold for every $u\in V(G)$: \vspace{-2mm}
\begin{enumerate}[label=\textbf{$(\alph*\ref{def:2})$}]\setlength{\itemsep}{-3pt}
\item \label{a1} $\varphi(u,uv)\neq \varphi(u,uw)$ for any two distinct vertices $v,w\in N_G(u)$;
\item \label{b1} every color in $\varphi(I_u):=\{\varphi(u,uv)~|~v\in N_G(u)\}$ does not appear among $A_u$.\vspace{-2mm}
\end{enumerate}
\end{defn}

We similarly define $\varphi(A_u):=\{\varphi(v,uv)~|~v\in N_G(u)\}$ and $\varphi(I_u\cup A_u):=\varphi(I_u)\cup \varphi(A_u)$, which will be frequently used throughout this paper.

The \textit{incidence chromatic number} of $G$, denoted by $\chi_i(G)$, is the minimum integer $k$ such that $G$ has a proper incidence $k$-coloring.
From either Definition \ref{def:1} or Definition \ref{def:2}, one can easily see that $\chi_i(G)\geq \Delta(G)+1$ for every graph $G$.
The best known upper bound for $\chi_i(G)$ is $\Delta(G)+20\log \Delta(G)+84$, due to Guiduli \cite{MR1428581}, who disproved
a conjecture of Brualdi and Quinn Massey \cite{MR1246668} that 
$\chi_i(G)\leq \Delta(G)+2$ for every graph $G$. This upper bound is also asymptotically sharp \cite{ALGOR198911,MR1428581}.

There are many interesting variations of incidence coloring of graphs, including incidence list coloring \cite{MR3981224}, incidence game coloring \cite{MR2522464,MR2594489}, interval incidence coloring \cite{MR3163167},
fractional incidence coloring \cite{MR2976372}, and oriented incidence coloring \cite{MR3878287}.

Motivated by Definition \ref{def:2}, we introduce the defective incidence coloring of graphs in this paper.
\begin{defn}\label{def:3}
A \textit{$d$-defective incidence $k$-coloring} of $G$ is a mapping $\varphi:I(G) \longrightarrow [k]$ 
such that the following conditions hold for every $u\in V(G)$: \vspace{-2mm}
\begin{enumerate}[label=\textbf{$(\alph*\ref{def:3})$}]\setlength{\itemsep}{-3pt}
\item \label{a} $\varphi(u,uv)\neq \varphi(u,uw)$ for any two distinct vertices $v,w\in N_G(u)$;
\item \label{b} $\varphi(u,uv)\neq \varphi(v,uv)$ for every  $v\in N_G(u)$;
\item \label{c} every color in $\varphi(I_u)$ appears at most $d$ times among $A_u$.\vspace{-2mm}
\end{enumerate}
\end{defn}
The minimum number of colors used among all $d$-defective incidence colorings of a graph $G$, denoted by $\chi^d_i(G)$, is the \textit{$d$-defective incidence chromatic number}. 

Comparing Definition \ref{def:2} with Definition \ref{def:3}, one can easily see that 
the $0$-defective incidence coloring is coincide with the proper incidence coloring, and thus $\chi^0_i(G)=\chi_i(G)$. 
Moreover,
\begin{align}\label{rela}
    \chi^0_i(G)\geq \chi^1_i(G)\geq \chi^2_i(G)\geq \cdots \geq \Delta(G).
\end{align}
This motivates us to define the \textit{incidence defectivity} $\defi(G)$ of a graph $G$.
Formally, we let
\begin{align*}
    \defi(G)=\min\{k~|~\chi^k_i(G)=\Delta(G)\}.
\end{align*}

This paper is organized as follows. 

In Section \ref{sec:cycle}, we show that the $d$-defective incidence chromatic number of a tree or a complete bipartite graph is its maximum degree whenever $d\geq 1$. Moreover, we construct linear-time algorithm to compute a $d$-defective incidence $\Delta$-coloring for every such graph with maximum degree $\Delta$. In Section \ref{sec:complete}, we establish an interesting relationship between the $1$-defective incidence colorings of the complete graph $K_n$ and the $n\times n$ Latin squares, and then prove that $K_n$ has a $d$-defective incidence $(n-1)$-coloring for every integer $d\geq 1$ whenever $n\neq 2,4$.
Thereafter, based on our new results on Latin squares, a quadratic-time algorithm is designed for constructing a $d$-defective incidence $(n-1)$-coloring of $K_n$ with $n\neq 2,4$. In Section \ref{sec:outerplanar}, we move our attention to the class of outerplanar graphs, a special graph class with bounded treewidth. 
We show that every outerplanar graph with maximum degree $\Delta\geq 4$ admits a $1$-defective incidence $\Delta$-coloring, and this bound for $\Delta$ is sharp.
Furthermore, we prove that every outerplanar graph with maximum degree $\Delta$ admits a $d$-defective incidence $\Delta$-coloring whenever $d\geq 2$, unless the graph is isomorphic to disjoint copies of $K_1$ and $K_2$. All colorings mentioned above can be constructed in polynomial time according to the proofs in Section \ref{sec:outerplanar}.

\section{Trees and complete bipartite graphs} \label{sec:cycle}

It is easy to observe that every path and every cycle has $1$-defective incidence chromatic number exactly 2, as we can color its incidences alternately by two colors following a fixed direction. Hence we have the following. 

\begin{thm}\label{thm:path}
If $n\geq 3$ is an integer, then $\chi^d_i(P_n)=2$ and $\chi^d_i(C_n)=2$ for every integer $d\geq 1$. \hfill$\square$
\end{thm}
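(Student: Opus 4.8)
The plan is to establish matching lower and upper bounds, both of which are elementary. For the lower bound, observe that each of $P_n$ (with $n\geq 3$) and $C_n$ has a vertex of degree $2$, so $\Delta(P_n)=\Delta(C_n)=2$; condition \ref{a} forces the two strong incidences at such a vertex to receive distinct colors, giving $\chi^d_i\geq 2$, as also recorded in \eqref{rela}. It then suffices to construct, for every $d\geq 1$, a $d$-defective incidence $2$-coloring. In fact I would build a single $1$-defective incidence $2$-coloring and observe that it serves for all $d\geq 1$ at once: if every color of $\varphi(I_u)$ occurs at most once in $A_u$, it occurs at most $d$ times for each $d\geq 1$, so \ref{c} is met for every $d\geq 1$ simultaneously.

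For the construction I would fix an orientation -- along the path from one end to the other, or around the cycle -- and write the vertices as $v_1,v_2,\ldots$ in that direction. I then set $\varphi(v_i,v_iv_{i+1})=1$ on every forward strong incidence and $\varphi(v_i,v_{i-1}v_i)=2$ on every backward strong incidence; equivalently, the incidences are colored $1,2,1,2,\ldots$ alternately following the direction. Because on each edge the incidence at its tail is colored $1$ and the incidence at its head is colored $2$, the two incidences of a common edge always differ, so \ref{b} holds; and the two strong incidences at any degree-$2$ vertex are colored $1$ and $2$, so \ref{a} holds as well.

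The only condition carrying real content is \ref{c}. At an interior vertex $v_i$ we have $\varphi(I_{v_i})=\{1,2\}$, whereas the two weak incidences $(v_{i-1},v_{i-1}v_i)$ and $(v_{i+1},v_iv_{i+1})$ are a forward strong incidence (color $1$) and a backward strong incidence (color $2$); thus each of the colors $1$ and $2$ appears exactly once in $A_{v_i}$, which is permissible for every $d\geq 1$. For $P_n$ I would finish by inspecting the two endpoints: each carries a single strong incidence and a single weak incidence, and a one-line check shows that the unique color of $\varphi(I_u)$ does not appear in $A_u$ at all. For $C_n$ there are no endpoints, and the only thing to verify is that the alternating pattern closes up on returning to $v_1$; this is automatic, since $C_n$ has $2n$ incidences and $2n$ is even (and indeed the role-based description above is consistent by definition).

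I do not anticipate a genuine obstacle: the coloring is explicit and every check is local, so the argument also yields the linear-time algorithm promised in this section via a single pass along the vertices. The one point deserving attention is condition \ref{c}, the feature that separates defective from proper incidence coloring: it is precisely this condition that fails at $d=0$, where properness forces $\chi^0_i=\chi_i\geq\Delta+1=3$, yet it becomes satisfiable the moment $d\geq 1$, which is exactly the content of the theorem.
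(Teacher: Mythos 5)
Your proof is correct and follows essentially the same route as the paper, which simply observes that the incidences can be colored alternately by two colors following a fixed direction along the path or cycle. Your additional verification of conditions \ref{a}--\ref{c} at interior vertices and endpoints, and the lower bound via $\Delta=2$, just makes explicit what the paper leaves as "easy to observe."
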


A \textit{rooted tree} is a connected acyclic graph with a special vertex that is called the \textit{root} of the tree and every edge directly or indirectly originates from the root. In a rooted tree, every vertex $v$, except the root, has exactly one
\textit{parent vertex} $u$, which is the first vertex traversed on the
path from $v$ to the root. The vertex $v$ is called a \textit{child} of $u$. We use ${\rm Par}(v)$ to denote the parent vertex of $v$ in a rooted tree. An \textit{ordered tree} is a rooted tree in which an ordering is specified for the children of each vertex \cite{Bender}.
The \textit{depth} of a vertex in an ordered tree is the length of the (unique) path from
the root to the vertex. Note that the root has depth 0.

\begin{thm}\label{thm:tree}
If $T$ is a tree , then $\chi^d_i(T)=\Delta(T)$ for every integer $d\geq 1$.
\end{thm}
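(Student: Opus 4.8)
The lower bound $\chi^d_i(T)\ge \Delta(T)$ is free: it is the tail of the chain (\ref{rela}), and indeed condition (a) of Definition~\ref{def:3} forces the $\Delta(T)$ strong incidences at a maximum-degree vertex to receive pairwise distinct colours. So the entire content is the upper bound $\chi^d_i(T)\le\Delta(T)$. Since $\chi^d_i(T)$ is non-increasing in $d$ by (\ref{rela}), it suffices to produce a single $1$-defective incidence $\Delta$-coloring, which is then automatically $d$-defective for every $d\ge 1$. After disposing of the trivial cases $\Delta(T)\le 1$ (the graphs $K_1$ and $K_2$) and the paths $\Delta(T)=2$ (covered by Theorem~\ref{thm:path}), I will assume $\Delta:=\Delta(T)\ge 3$.

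The plan is a root-to-leaf sweep. I root $T$ at a leaf and process the vertices in non-decreasing order of depth, so every vertex is treated after its parent. I maintain the invariant that, when a vertex $v$ with parent $u$ is processed, both incidence colours on $uv$ are already fixed: write $p=\varphi(u,uv)$ (a weak incidence of $v$) and $q=\varphi(v,uv)$ (the strong incidence of $v$ towards $u$), where $p\ne q$ because (b) was enforced when $u$ was treated. At $v$ I assign, for each child $w_i$, both incidence colours of $vw_i$ simultaneously: the strong incidence $a_i:=\varphi(v,vw_i)$ of $v$ and the strong incidence $b_i:=\varphi(w_i,vw_i)$ of $w_i$ (a weak incidence of $v$). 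The point is that this makes $A_v=\{p\}\cup\{b_1,\dots,b_k\}$ fully determined during $v$'s own turn, so (c) at $v$ can be secured now; the only thing deferred to $w_i$'s turn is (c) at $w_i$ together with the completion of $w_i$'s strong palette, and the latter is always possible since $\deg(w_i)\le\Delta$. The root, a leaf, is handled by the same rule with the $p,q$ slots empty.

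Everything thus reduces to a local feasibility question at $v$: choose pairs $(a_i,b_i)$, $i=1,\dots,k$ with $k+1=\deg(v)$, so that $q,a_1,\dots,a_k$ are distinct (condition (a)); $a_i\ne b_i$ (condition (b)); and every strong colour of $v$, i.e. every element of $\{q,a_1,\dots,a_k\}$, occurs at most once in $\{p,b_1,\dots,b_k\}$ (condition (c) with $d=1$). If $\deg(v)<\Delta$ this is immediate: take the $a_i$ to be any $k$ distinct colours other than $q$, and give every $b_i$ one fixed colour chosen outside the strong palette $\{q,a_1,\dots,a_k\}$, which is nonempty exactly because $\deg(v)<\Delta$. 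Then no strong colour appears among the $b_i$, only $p$ can meet a strong colour and it does so once, and $b_i\ne a_i$ is automatic.

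The single genuine obstacle is a vertex of degree exactly $\Delta$. There $q,a_1,\dots,a_{\Delta-1}$ exhaust all $\Delta$ colours, so every colour is a strong colour and (c) with $d=1$ forces the $\Delta$-element multiset $\{p,b_1,\dots,b_{\Delta-1}\}$ to be rainbow; equivalently $\{b_1,\dots,b_{\Delta-1}\}$ must be a bijective labelling of $C=[\Delta]\setminus\{p\}$ with $b_i\ne a_i$. I would view this as a perfect matching in the bipartite graph joining the $\Delta-1$ children to $C$ in which child $i$ is forbidden only from $a_i$. Because the down-colours $a_1,\dots,a_{\Delta-1}$ are pairwise distinct (they are precisely $[\Delta]\setminus\{q\}$), any two children forbid different colours, so any set of at least two children already sees all of $C$, while a single child sees at least $\Delta-2\ge 1$ colours; Hall's condition holds and the matching exists. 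An explicit cyclic (derangement-type) rule on $C$ realises such a matching, keeping the sweep linear-time as claimed. Splicing the local solutions together along the BFS order produces the required $1$-defective incidence $\Delta$-coloring and completes the upper bound.
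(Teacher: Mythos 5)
Your argument is correct, but it takes a genuinely different route from the paper's. The paper roots $T$ at a vertex of maximum degree and writes down one closed arithmetic formula modulo $\Delta$: the $i$-th child edge of a vertex gets its two incidence colours by shifting the parent edge's two colours by $i$. This single rule makes \emph{all} weak incidences of \emph{every} vertex pairwise distinct (a rainbow $A_u$ everywhere, strictly stronger than \ref{c} with $d=1$), so the verification is one line and the formula is already Algorithm \ref{algo:tree}. You instead root at a leaf, sweep down, and solve a local feasibility problem at each vertex: trivial below maximum degree (where you deliberately allow all the $b_i$ to coincide on a colour outside the strong palette), and a system-of-distinct-representatives problem, settled by Hall's condition, exactly at degree-$\Delta$ vertices. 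Your analysis buys a clear view of where the constraint actually bites and why (only full-degree vertices force $A_v$ to be rainbow), and it would adapt to situations where no global formula is available; the paper's formula buys brevity, an immediate explicit linear-time algorithm, and the stronger rainbow property for free. One shared caveat: like the paper, you wave at $\Delta(T)\le 1$ as "trivial", but $K_2$ is in fact a genuine exception to the statement, since \ref{b} of Definition \ref{def:3} forces its two incidences to get distinct colours, so $\chi^d_i(K_2)=2>\Delta(K_2)$; this is a defect of the theorem as stated rather than of either proof, but "disposing of" it should mean excluding it, not verifying it.
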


\begin{proof}
It is sufficient to show that $\chi^1_i(T)=\Delta(T)$.
Assume that $T$ has  already been embdded as an ordered tree whose root $r$ has the maximum degree $\Delta$.
We construct an incidence $\Delta$-coloring $\varphi$ of $T$ as follows.

First of all, for each child $u_i$ of $r$, let $\varphi(r,ru_i)=i-1$ (mod $\Delta$) and $\varphi(u_i,ru_i)=i$ (mod $\Delta$), where 
$u_i$ denotes the $i$-th child of $r$,
 And then for each vertex $u$ at depth $\ell\geq 1$, let $\varphi(u,uw_i)=\varphi(u,uv)+i$ (mod $\Delta$) and 
 $\varphi(w_i,uw_i)=\varphi(v,uv)+i$ (mod $\Delta$), where $w_i$ is the $i$-th child of $u$  and 
 $v$ is the parent of $u$. 
 
 We show that this incidence coloring of $T$ is $1$-defective.
Clearly, \ref{a} and \ref{b} by the construction of the coloring, and one can see that every two incidences of $A_u$ for any $u\in V(T)$ are colored distinctly. Hence \ref{c} with $d=1$ holds naturally. 
\end{proof}

Below we release a linear-time algorithm to construct a $1$-defective incidence $\Delta(T)$-coloring of $T$
based on the proof of Theorem \ref{thm:tree}.
Figure \ref{fig:Tree} shows an instance of Algorithm \ref{algo:tree} on how to construct a 1-defective incidence 3-coloring of a given tree with maximum degree three.

\begin{figure}[htp]
    \centering
    \includegraphics[width=12cm,height=8cm]{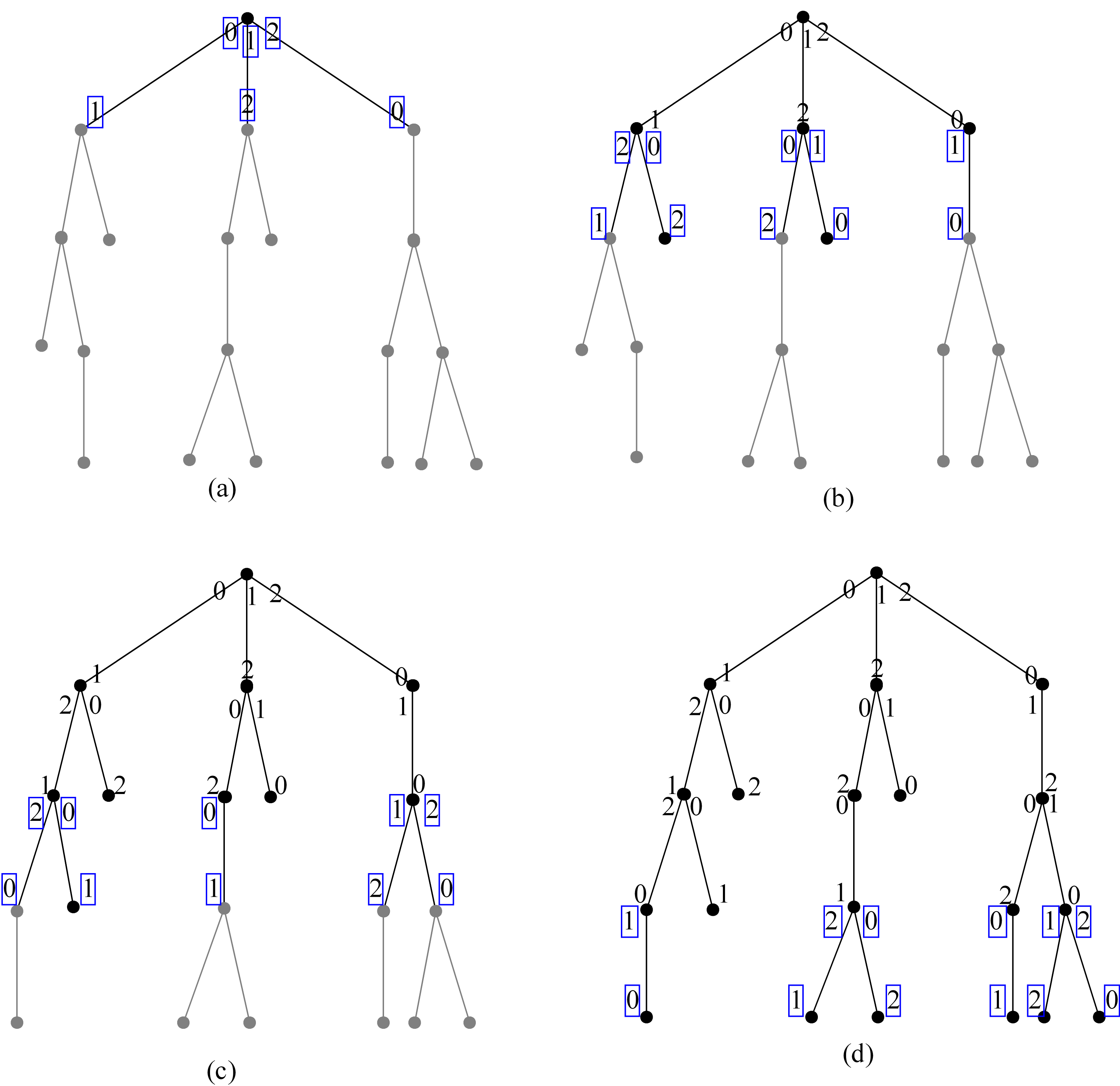}
    \caption{Constructing a 1-defective incidence 3-coloring using breadth-first search}
    \label{fig:Tree}
\end{figure}  

\begin{algorithm}[htp]
\BlankLine
\KwIn{A tree $T$ with maximum degree $\Delta$.}
\KwOut{A $1$-defective incidence coloring of $T$ using $\Delta$ colors. }
\BlankLine
Find a vertex $r$ of maximum degree in $T$\\
Root $T$ at $r$ using breadth-first search so that $r$ has a labelling $f(r)=(0)$ and each vertex $u$ at depth $\ell\geq 1$ has a labelling with an $(\ell+1)$-array $(x_0,\ldots,x_{\ell-1},x_{\ell})$ such that $(x_0,\ldots,x_{\ell-1})$ is the labelling of the parent of $u$ and $u$ is the $(x_\ell+1)$-th child of its parent\\
\tcc{$p(u)$ denotes below the last entry of the labelling of $u$.}
\For{$\ell=0$ to ${\rm Dep}(T)$}
{
\eIf{$\ell=0$}
{\For{each child $u$ of $r$}
{
$\varphi(r,ru) \gets\ p(u)$ (mod $\Delta)$\\
$\varphi(u,ru) \gets p(u)+1$  (mod $\Delta)$}
}{

\For{each vertex $u$ at depth $\ell$}
{
$v\gets {\rm Par}(u)$\\

\For {each child $w$ of $u$}
{
{$\varphi(u,uw) \gets\ \varphi(u,uv)+p(u)+1$ (mod $\Delta)$\\
$\varphi(w,uw) \gets\ \varphi(v,uv)+p(u)+1$ (mod $\Delta)$}
}
}
}
}
\caption{$1$-defective incidence coloring of $T$}\label{algo:tree}
\end{algorithm}


\begin{thm}\label{kmn}
$\chi^d(K_{m,n})= \max\{m,n\}=\Delta(K_{m,n})$ for every integer $d\geq 1$.
\end{thm}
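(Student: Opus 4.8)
The plan is to prove the two inequalities implied by the equality. Write $A=\{a_1,\dots,a_m\}$ and $B=\{b_0,\dots,b_{n-1}\}$ for the two parts of $K_{m,n}$, and assume without loss of generality that $m\le n$, so that $\Delta(K_{m,n})=n$. The lower bound $\chi^d_i(K_{m,n})\ge n$ is immediate from \eqref{rela}, since $\chi^d_i(G)\ge\Delta(G)$ for every $G$. Moreover \eqref{rela} gives $\chi^d_i(K_{m,n})\le\chi^1_i(K_{m,n})$ for every $d\ge1$, so it suffices to construct a single $1$-defective incidence coloring of $K_{m,n}$ that uses the $n$ colours $\{0,1,\dots,n-1\}$; together with the lower bound this forces $\chi^d_i(K_{m,n})=n$ for all $d\ge1$ at once.

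First I would record the two colour matrices a coloring $\varphi$ produces: set $\alpha_{ij}=\varphi(a_i,a_ib_j)$ and $\beta_{ij}=\varphi(b_j,a_ib_j)$ for $i\in\{1,\dots,m\}$ and $j\in\{0,\dots,n-1\}$. Translating Definition \ref{def:3} cell by cell shows that \ref{a} forces every row of $\alpha$ to be a permutation of $\{0,\dots,n-1\}$ and every column $\beta_{1j},\dots,\beta_{mj}$ of $\beta$ to be repetition-free; \ref{b} is exactly $\alpha_{ij}\ne\beta_{ij}$; and, because each high-degree vertex $a_i$ already sees all $n$ colours on $I_{a_i}$, condition \ref{c} with $d=1$ forces the weak incidences at $a_i$ to be pairwise distinct, i.e. every row of $\beta$ to be a permutation of $\{0,\dots,n-1\}$. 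The upshot is that building a $1$-defective incidence $n$-coloring reduces to exhibiting two $m\times n$ Latin rectangles $\alpha$ and $\beta$ (rows permutations of the colour set, columns repetition-free) that disagree in every cell.

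For this I would use the cyclic shift $\alpha_{ij}=(i+j)\bmod n$ and $\beta_{ij}=(i+j+1)\bmod n$. Each row of each matrix runs over all residues modulo $n$, and since $m\le n$ each column is a block of $m$ consecutive residues modulo $n$ and hence repetition-free, so both are Latin rectangles; they disagree everywhere because $\beta_{ij}-\alpha_{ij}\equiv1\not\equiv0\pmod n$ whenever $n\ge2$ (the case $n=1$, i.e.\ $K_{1,1}$, is excluded, since there $\Delta=1$ while \ref{b} already needs two colours). Unwinding the dictionary, this colours the strong incidence $(a_i,a_ib_j)$ with $(i+j)\bmod n$ and the strong incidence $(b_j,a_ib_j)$ with $(i+j+1)\bmod n$.

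It then remains to verify the three conditions directly against this formula. Conditions \ref{a} and \ref{b} are exactly the Latin-rectangle and disagreement properties just established. For \ref{c} with $d=1$, at a vertex $a_i$ the multiset $\varphi(A_{a_i})=\{(i+j+1)\bmod n: j\in\{0,\dots,n-1\}\}$ is all residues without repetition, so every colour occurs at most once; at a vertex $b_j$ the multiset $\varphi(A_{b_j})=\{(i+j)\bmod n: i\in\{1,\dots,m\}\}$ is $m$ consecutive residues and is likewise repetition-free, so \ref{c} holds automatically irrespective of $\varphi(I_{b_j})$. I expect the only genuine obstacle to be the bookkeeping of the second paragraph—correctly reading off what each of \ref{a}, \ref{b}, \ref{c} demands of the two matrices, and in particular noticing that on the low-degree side $B$ condition \ref{c} comes for free from distinct columns—after which the cyclic construction is essentially forced and the verification is a one-line residue computation. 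This construction is also explicit enough to yield the linear-time algorithm promised in the text.
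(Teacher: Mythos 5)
Your proposal is correct and follows essentially the same route as the paper: the same cyclic-shift construction (coloring the two incidences of $a_ib_j$ with $i+j$ and $i+j+1$ modulo $\max\{m,n\}$, which matches the paper's $i+j-1$ and $i+j$ up to a relabeling of colors and of which side is larger), verified by the same observation that all weak incidences at every vertex receive pairwise distinct colors. Your Latin-rectangle framing and your remark about $K_{1,1}$ are pleasant refinements of the write-up but do not change the argument.
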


\begin{proof}
Let $\{u_1,u_2,\ldots u_m\}$ and $\{v_1,v_2,\ldots v_n\}$ be 
the bipartition of $K_{m,n}$. Assume, without loss of generality, that $m\geq n$.
We construct an incidence $m$-coloring $m$ of $K_{m,n}$
by coloring $(v_i, u_jv_i)$ with $i+j-1$~(mod $m$), and
$(u_j, u_jv_i)$ with $i+j$~(mod $m$) for each $i\in [n]$ and $j\in [m]$.
Now we prove that this incidence coloring is 1-defective.

Clearly, \ref{a} and \ref{b} hold by the construction of $\varphi$. 

For each vertex $v_i$ with $i\in [n]$ and for each color $\ell\in \varphi(I_{v_i})$, if $\ell\in \varphi(A_{v_i})$, then by the construction of $\varphi$, there is a vertex $u_k$ with $k\in[m]$ such that $\varphi(u_k,u_kv_i)=\ell=i+k$~(mod $m$). 
Since $k\in[m]$, $k=\ell-i$~(mod $m$). This implies that the vertex $u_k$ is uniquely determined.
Hence the color $\ell$ appears exactly once among $A_{v_i}$.

On the other hand, for each vertex $u_j$ with $j\in [m]$ and each color $\ell\in \varphi(I_{u_j})$, there is a vertex $v_i$  with $i\in [n]$ such that
$\varphi(u_j,u_jv_i)=\ell=i+j$~(mod~$m$) by the construction of $\varphi$. If $\ell\in \varphi(A_{u_j})$, then by \ref{b}, there is a vertex $v_k$ with $k\in [n]\setminus \{i\}$ such that $\varphi(v_k,u_jv_k)=\ell=k+j-1$~(mod~$m$). This implies $k-1=i$~(mod~$m$). 
Since $1\leq k\leq n\leq m$, $k=i+1$~(mod~$m$). 
Hence the vertex $u_k$ is uniquely determined if it exists (note that it may happen that $i=m-1$ and $n<m$, and thus there is no solution for $k$).
Hence the color $\ell$  appears at most once among $A_{u_j}$.

Therefore, \ref{c} holds in each case, and thus $m\geq \chi^d(K_{m,n})\geq \Delta(K_{m,n})=m$ for every integer $d\geq 1$.
\end{proof}

Note that the proof of Theorem \ref{kmn} naturally yields a linear-time algorithm to construct a $1$-defective incidence coloring of $K_{m,n}$ using $\Delta(K_{m,n})$ colors. 

We conclude the following from Theorems \ref{thm:path}, \ref{thm:tree}, and \ref{kmn}. 
\begin{cor}
$\defi(G)=1$ if $G$ is a path, a cycle, a tree, or a bipartite graph.
\end{cor}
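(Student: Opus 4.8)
The plan is to read the result off directly from the three preceding theorems, so the real work is to unwind the definition $\defi(G)=\min\{k\mid \chi^k_i(G)=\Delta(G)\}$ and to check that $1$ is the smallest index $k$ for which $\chi^k_i(G)=\Delta(G)$. I would split this into the two inequalities $\defi(G)\le 1$ and $\defi(G)\ge 1$: the first says that allowing defect $1$ already brings the chromatic number down to $\Delta(G)$, while the second says that defect $0$ does not.

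For $\defi(G)\le 1$ I would match each listed family to its theorem. If $G$ is a path $P_n$ or a cycle $C_n$ with $n\ge 3$, then $\Delta(G)=2$ and Theorem \ref{thm:path} gives $\chi^1_i(G)=2=\Delta(G)$; if $G$ is a tree, Theorem \ref{thm:tree} gives $\chi^1_i(G)=\Delta(G)$; and if $G$ is a complete bipartite graph $K_{m,n}$, which is the family actually covered by Theorem \ref{kmn} and to which the last case refers, then $\chi^1_i(G)=\max\{m,n\}=\Delta(G)$. In every case $1$ lies in the set $\{k\mid \chi^k_i(G)=\Delta(G)\}$, so $\defi(G)\le 1$. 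For $\defi(G)\ge 1$ I would use the universal bound $\chi_i(G)\ge\Delta(G)+1$ recalled in Section \ref{sec:1} together with the identity $\chi^0_i(G)=\chi_i(G)$: this gives $\chi^0_i(G)\ge\Delta(G)+1>\Delta(G)$, so $0$ does not lie in that set and the minimum is at least $1$. Combining the two inequalities yields $\defi(G)=1$.

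I expect no serious obstacle, since this is a bookkeeping corollary of results already in hand; only two points need care. First, to guarantee that the minimum equals $1$ rather than merely being bounded above by it, I would invoke the monotone chain \eqref{rela}: once $\chi^1_i(G)=\Delta(G)$, monotonicity forces $\chi^k_i(G)=\Delta(G)$ for all $k\ge 1$, while the lower bound excludes $k=0$, so $1$ is exactly the least admissible index. Second, I would restrict attention to graphs having a vertex of degree at least $2$, as the three cited theorems implicitly do: for the degenerate graphs $K_1$ and $K_2$ one has $\Delta\le 1$, the constructions behind Theorems \ref{thm:tree} and \ref{kmn} break, and condition \ref{b} alone forces $\chi^k_i(K_2)=2$ for every $k$, so that $\defi(K_2)$ is in fact undefined. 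With this mild proviso the corollary follows.
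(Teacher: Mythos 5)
Your proposal is correct and matches the paper's (unwritten) argument exactly: the corollary is stated with no proof beyond citing Theorems \ref{thm:path}, \ref{thm:tree}, and \ref{kmn} for the upper bound $\defi(G)\le 1$, with the lower bound coming from $\chi^0_i(G)=\chi_i(G)\ge\Delta(G)+1$ and the monotone chain \eqref{rela}, which is precisely your route. Your two caveats are well taken rather than defects of your proof --- Theorem \ref{kmn} only covers \emph{complete} bipartite graphs, and the statement fails for trees/bipartite graphs with $\Delta\le 1$ such as $K_2$ (where $\defi(K_2)=\infty$, as the paper itself notes after Theorem \ref{complete2}) --- so the corollary as literally printed is slightly overstated and your restricted reading is the correct one.
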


\begin{algorithm}[htp]
\BlankLine
\KwIn{Integers $m$ and $n$ with $m\geq n$ ;}
\KwOut{A $1$-defective incidence coloring of $K_{m,n}$ using colors $m$.}
\BlankLine
\tcc{ $\{u_1,u_2,\ldots u_m\}$ and $\{v_1,v_2,\ldots v_n\}$ are the bipartition of $K_{m,n}$.}
\For{$i=1$ to $n$}
{\For{$j=1$ to $m$}
{$\varphi(v_i,u_jv_i) \gets\ i+j-1$ (mod $m$)\\
 $\varphi(u_j,u_jv_i) \gets\ i+j$ (mod $m$)}
 }
\caption{$1$-defective  incidence coloring of $K_{m,n}$}
\end{algorithm}

\section{Complete graphs and Latin squares} \label{sec:complete}

\begin{lem}\label{keylemma}
$\varphi$ is a $1$-defective incidence $(n-1)$-coloring of $K_n$ if and only if every two strong incidences and every two weak incidences of any vertex are colored differently under $\varphi$, and  $(u,uv)$ and $(v,uv)$ receive distinct colors for each pair of vertices $u$ and $v$.
\end{lem}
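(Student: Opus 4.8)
The plan is to unwind Definition \ref{def:3} in the special case $G = K_n$, $d = 1$, and $k = n-1$, and to observe that the tightness of the palette forces a clean reformulation. First I would note that $K_n$ is $(n-1)$-regular, so for every vertex $u$ the sets $I_u$ and $A_u$ each contain exactly $n-1$ incidences, and the color set $[n-1]$ also has exactly $n-1$ elements.

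The crucial observation is this: condition \ref{a} of Definition \ref{def:3} requires the $n-1$ strong incidences of $u$ to receive pairwise distinct colors, and since the palette has only $n-1$ colors, this forces $\varphi(I_u) = [n-1]$ for every $u \in V(K_n)$. In other words, once the strong incidences at each vertex are properly colored with an $(n-1)$-palette, every color is automatically a strong color at every vertex. This single counting remark is what makes condition \ref{c} collapse to a statement purely about distinctness of weak incidences, and I expect it to be the one nontrivial idea in the argument.

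For the forward direction, I would assume $\varphi$ is a $1$-defective incidence $(n-1)$-coloring and verify the three listed properties. Conditions \ref{a} and \ref{b} of the definition are literally the assertions that the strong incidences of each vertex are colored differently and that $(u,uv)$ and $(v,uv)$ differ, so two of the three listed properties hold immediately. For the weak incidences I invoke the observation $\varphi(I_u) = [n-1]$: condition \ref{c} with $d = 1$ then says that each of the $n-1$ colors appears at most once among the $n-1$ weak incidences of $u$, and a pigeonhole count forces all of them to be distinct, which is precisely the weak-incidence condition.

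For the converse I would start from the three listed properties. The first and third give \ref{a} and \ref{b} directly. The first property again yields $\varphi(I_u) = [n-1]$, so the distinctness of the $n-1$ weak incidences means that each color of $[n-1]$—in particular each color of $\varphi(I_u)$—occurs exactly once, hence at most once, among $A_u$; this is exactly \ref{c} with $d = 1$. The only subtlety worth flagging is that the equivalence genuinely depends on using \emph{exactly} $n-1$ colors: with a larger palette, $\varphi(I_u)$ could be a proper subset of the colors and condition \ref{c} would no longer be equivalent to full distinctness of the weak incidences. Beyond stating the counting step carefully, I do not anticipate any real obstacle.
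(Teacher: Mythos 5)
Your proposal is correct and follows essentially the same route as the paper: both arguments hinge on the single counting observation that condition \ref{a} together with the $(n-1)$-color palette forces $\varphi(I_u)=[n-1]$ at every vertex, so that \ref{c} with $d=1$ becomes equivalent to the pairwise distinctness of the $n-1$ weak incidences. The paper's proof is terser (it dismisses the sufficiency as trivial), but there is no substantive difference in the argument.
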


\begin{proof}
The sufficiency is obvious since \ref{a}, \ref{b}, and \ref{c} with $d=1$ hold trivially. 
For the necessary, we know that every two strong incidences of any vertex are colored differently under $\varphi$ by \ref{a}, and 
$(u,uv)$ and $(v,uv)$ receive distinct colors for each pair of vertices $u$ and $v$ by \ref{b}. For each vertex $v$, since
$\varphi(I_v)=[\Delta]$, there is no color appearing twice on $A_v$ by \ref{c} with $d=1$. Since $|A_v|=n-1$ and $\varphi$ has exactly $n-1$ colors, every two weak incidences of $v$ are colored differently under $\varphi$.
\end{proof}

A \textit{Latin square} is an $n \times n$ square matrix whose entries consist of $n$ symbols such that each symbol appears exactly once in each row
and each column. An \textit{intercalate} in a Latin square $L$ is a $2 \times 2$ Latin subsquare, that is, two rows and two columns whose intersection includes only two symbols. A \textit{principal  intercalate} in a Latin square $L$ is an intercalate obtained by striking out from $L$ the same rows as columns.

\begin{lem}\label{keylemma2}
$K_n$ has a $1$-defective incidence $(n-1)$-coloring if and only if there is an $n \times n$ Latin square without principal intercalates whose 
entries at the main diagonal are the same.
\end{lem}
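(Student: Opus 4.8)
The plan is to exhibit an explicit correspondence between $1$-defective incidence $(n-1)$-colorings of $K_n$ and the Latin squares described in the statement, and then check that each defining condition on one side translates into a defining condition on the other. Label the vertices of $K_n$ by $\{1,2,\ldots,n\}$. Given a $1$-defective incidence $(n-1)$-coloring $\varphi$, whose color set I take to be $[n-1]$, I would define an $n\times n$ matrix $L$ by setting $L_{u,v}=\varphi(u,uv)$ for $u\neq v$ and $L_{u,u}=n$, where $n$ is a fresh symbol not used by $\varphi$. In the other direction, starting from a Latin square $L$ with constant main diagonal and no principal intercalate, I would first relabel the $n$ symbols so that the common diagonal symbol becomes $n$ (so that the off-diagonal entries of each row and column use exactly the symbols $[n-1]$), and then set $\varphi(u,uv)=L_{u,v}$ for every ordered pair of distinct vertices.

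For the forward direction I would read off the three conditions of Lemma \ref{keylemma}. Since $L_{u,v}=\varphi(u,uv)$ records the strong incidences of $u$ along row $u$, condition \ref{a} says precisely that the off-diagonal entries of each row are pairwise distinct; since the weak incidences of $u$ are exactly the entries $\varphi(v,uv)=L_{v,u}$ lying in column $u$, the distinctness of weak incidences guaranteed by Lemma \ref{keylemma} says precisely that the off-diagonal entries of each column are pairwise distinct. Together with $L_{u,u}=n$ for all $u$, each row and each column then contains each of the $n$ symbols exactly once, so $L$ is a Latin square with constant diagonal. It then remains only to rule out principal intercalates.

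Here is the crux of the argument. A principal intercalate lives on a pair of rows and the same pair of columns $\{i,j\}$, and amounts to the submatrix $\bigl(\begin{smallmatrix} L_{i,i} & L_{i,j}\\ L_{j,i} & L_{j,j}\end{smallmatrix}\bigr)$ being a $2\times 2$ Latin square on two symbols, i.e. $L_{i,i}=L_{j,j}$ and $L_{i,j}=L_{j,i}$ with these two symbols distinct. Because the diagonal is constant, the equality $L_{i,i}=L_{j,j}$ holds automatically, so the existence of a principal intercalate is equivalent to the existence of distinct $i,j$ with $L_{i,j}=L_{j,i}$, that is, with $\varphi(i,ij)=\varphi(j,ij)$. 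This is exactly the negation of condition \ref{b} in Lemma \ref{keylemma}. Thus the no-principal-intercalate condition corresponds precisely to \ref{b}, which finishes the forward direction; the backward direction runs the same equivalences in reverse, invoking Lemma \ref{keylemma} to conclude that the constructed $\varphi$ is a $1$-defective incidence $(n-1)$-coloring.

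The only genuinely delicate point I anticipate is the bookkeeping in the previous paragraph: one must confirm that the diagonal symbol $n$ never appears in an off-diagonal cell, so that a potential principal intercalate is forced to use a symbol of $[n-1]$ off the diagonal and the reduction to condition \ref{b} is clean. This is immediate once the diagonal is constant and the off-diagonal entries of each row exhaust $[n-1]$. With that observed, the entire argument is a direct translation of conditions rather than a construction requiring any new idea.
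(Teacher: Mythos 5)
Your proposal is correct and follows essentially the same route as the paper: both build the dictionary $L(i,j)=\varphi(v_i,v_iv_j)$ with a constant diagonal symbol, invoke Lemma \ref{keylemma} to identify row/column distinctness with the strong/weak incidence conditions, and translate the absence of principal intercalates into condition \ref{b}. Your extra remark that the diagonal symbol never occurs off the diagonal is a worthwhile clarification, but it does not change the argument.
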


\begin{proof}
Let $L$ be an $n \times n$ Latin square without principal intercalates whose 
entries on the main diagonal are the same. 
Permute (if necessary) the symbols $0,1,\ldots,n-1$ of $L$ so that the entries at the main diagonal are $0$.
Clearly, the resulting Latin square $\widetilde{L}$ still has no principal intercalates.  

Let $v_1,v_2,\ldots,v_n$ be vertices of $K_n$. We color the incidence $(v_i,v_iv_j)$ with the entry $\widetilde{L}(i,j)$ of the Latin square that appears at row $i$ and column $j$, and denote this coloring by $\varphi$. Now for each vertex $v_i$, $\varphi(I_{v_i})=\{\widetilde{L}(i,j)~|~j\in [n]\setminus \{i\}\}=[n-1]$ and
$\varphi(A_{v_i})=\{\widetilde{L}(j,i)~|~j\in [n]\setminus \{i\}\}=[n-1]$, since $\widetilde{L}$ is a Latin square with $\widetilde{L}(i,i)=0$ for each $i\in [n]$.
This implies that every two strong incidences and every two weak incidences of $v_i$ are colored differently under $\varphi$.
For each pair of vertices $v_i$ and $v_j$, $\varphi(v_i,v_iv_j)=\widetilde{L}(i,j)$ and $\varphi(v_j,v_iv_j)=\widetilde{L}(j,i)$. Since $\widetilde{L}(i,i)=\widetilde{L}(j,j)=0$ and $\widetilde{L}$ has no 
principal intercalates, $\widetilde{L}(i,j)\neq \widetilde{L}(j,i)$, and thus $\varphi(v_i,v_iv_j)\neq \varphi(v_j,v_iv_j)$. By Lemma \ref{keylemma}, $\varphi$ is a $1$-defective incidence $(n-1)$-coloring of $K_n$.

In the other direction, if $\varphi$ is a $1$-defective incidence $n$-coloring of $K_n$, then we construct an $n \times n$ Latin square $L$ whose entry $L(i,j)$ at row $i$ and column $j$ with $i\neq j$ is $\varphi(v_i,v_iv_j)$, and entries at the main diagonal are all $n$. Since for each pair of $i$ and $j$, $L(i,j)=\varphi(v_i,v_iv_j)\neq \varphi(v_j,v_iv_j)=L(j,i)$ and $L(i,i)=L(j,j)=0$, $L$ has no principal intercalates.
\end{proof}

\begin{lem}\cite{McLeish,Kotzig1976263,KLR}\label{latin}
There exist $n\times n$ Latin squares with no intercalates if and only if $n\neq 2,4$,
\end{lem}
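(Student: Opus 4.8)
The plan is to prove the two directions of the equivalence separately: the \emph{necessity} that an intercalate-free square can exist only if $n\neq 2,4$, and the \emph{sufficiency} that such a square does exist for every such $n$. For necessity I would show that orders $2$ and $4$ are genuinely exceptional by a finite inspection, exploiting the fact that intercalate-freeness is invariant under permutations of rows, columns and symbols (an intercalate $\left(\begin{smallmatrix}a&b\\b&a\end{smallmatrix}\right)$ is carried to another intercalate under any such permutation). For $n=2$ the only Latin square is $\left(\begin{smallmatrix}0&1\\1&0\end{smallmatrix}\right)$, which is itself an intercalate, so the claim is immediate. For $n=4$ I would reduce to the finitely many squares up to isotopy: there are only four reduced Latin squares of order $4$ (two isotopy classes, represented by the Cayley tables of $\mathbb{Z}_4$ and of $\mathbb{Z}_2\times\mathbb{Z}_2$), and one checks directly that each contains a $2\times2$ Latin subsquare. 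By isotopy invariance this settles all of order $4$.

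For sufficiency the odd case is clean and I would dispatch it first. Take $L(i,j)=i+j \pmod n$ on $\{0,1,\ldots,n-1\}$, the addition table of $\mathbb{Z}_n$. If rows $i_1\neq i_2$ and columns $j_1\neq j_2$ formed an intercalate, then $i_1+j_1\equiv i_2+j_2$ and $i_1+j_2\equiv i_2+j_1 \pmod n$; subtracting these gives $2(j_1-j_2)\equiv 0 \pmod n$, and since $\gcd(2,n)=1$ for odd $n$ this forces $j_1\equiv j_2$, a contradiction. Hence the cyclic square is intercalate-free for every odd $n$.

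The even case $n\geq 6$ is where the real difficulty lies, since the cyclic construction fails exactly when $n$ is even: the relation $2(j_1-j_2)\equiv 0 \pmod n$ now has the nontrivial solution $j_1-j_2=n/2$, which does create intercalates. Here I would build the squares recursively, passing from an intercalate-free square of odd order $n-1$ to one of even order $n$ by a \emph{prolongation}: choose a transversal of the order-$(n-1)$ square, recolor its cells with a fresh symbol, and displace the evicted symbols into a newly adjoined row and column. A prolongation along a carelessly chosen transversal can itself introduce intercalates, so the heart of the argument, and the main obstacle, is to exhibit a transversal (or a small family of local repairs) for which no new $2\times2$ Latin subsquare appears. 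I expect to split this analysis according to whether $n\equiv 2\pmod 4$ or $n\equiv 0\pmod 4$, and to treat the smallest orders $n=6,8$ by writing down explicit squares should the general recursion fail to reach them.

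The step I anticipate being the most laborious is verifying intercalate-freeness of the prolonged square: one must examine how any two cells sharing a common value can align within the modified rows, columns and symbol, a finite but delicate case check that is the technical core underlying the constructions of McLeish and of Kotzig--Lindner. The odd case and the order-$2,4$ obstructions, by contrast, are short; the even constructive case carries essentially all of the weight of the lemma.
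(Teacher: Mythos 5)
This lemma is not proved in the paper at all: it is quoted from the literature (McLeish; Kotzig--Lindner--Rosa), so there is no in-paper argument to match yours against. Judged on its own terms, your proposal is correct and complete only for the easy parts. The impossibility for $n=2$ is immediate, the reduction of $n=4$ to the two isotopy classes (the Cayley tables of $\mathbb{Z}_4$ and $\mathbb{Z}_2\times\mathbb{Z}_2$) is legitimate because possessing a $2\times 2$ Latin subsquare is an isotopy invariant, and your cyclic construction for odd $n$ with the computation $2(j_1-j_2)\equiv 0 \pmod n$ is exactly right.

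The genuine gap is the even case $n\ge 6$, which you yourself identify as carrying ``essentially all of the weight of the lemma'' and then do not carry. Saying one should ``choose a transversal \ldots for which no new $2\times 2$ Latin subsquare appears'' is a restatement of the problem, not a proof: you exhibit no such transversal, give no criterion guaranteeing its existence, and do not perform the verification that the prolonged square is intercalate-free. Note also that a single prolongation of an intercalate-free square of odd order $n-1$ cannot be the whole story, since iterating it would have to be seeded somewhere and the case $n\equiv 0\pmod 4$ does not follow from one prolongation of an odd-order square; the published proofs require genuinely more elaborate constructions. As written, the proposal proves the lemma for odd $n$ and refutes it for $n=2,4$, but for even $n\ge 6$ it is a plan that defers the technical core back to the very papers being cited. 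It is worth observing that for everything this paper actually uses, only the weaker property ``no \emph{principal} intercalates with constant diagonal'' is needed, and Lemmas \ref{lem:a}--\ref{lem:c} construct such squares self-containedly via odd-order circulants and the $\nabla$-doubling; if your goal is to make the paper's argument self-contained, that easier target avoids the transversal/prolongation machinery entirely.
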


\begin{lem}\label{intercalate}
A $4\times 4$ Latin square with same entries at the main diagonal 
has no intercalate if and only if it has no principal intercalate.
\end{lem}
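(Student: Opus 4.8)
The plan is to prove the contrapositive of the non-trivial implication. One direction is immediate: a principal intercalate is in particular an intercalate, so if the square has no intercalate at all it has no principal one. For the converse I would show that if a $4\times 4$ Latin square $L$ with constant main diagonal contains an intercalate, then it must contain a principal one. After permuting symbols (which preserves the intercalate structure) I may assume every diagonal entry equals $0$. The elementary fact I would record first is that the symbol $0$ occupies exactly the diagonal: since $0$ appears once per row and sits at $(i,i)$, every off-diagonal entry is nonzero.

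I would then classify an arbitrary intercalate, sitting on row-set $R=\{r_1,r_2\}$ and column-set $C=\{c_1,c_2\}$, by the value of $|R\cap C|\in\{0,1,2\}$. If $|R\cap C|=2$ then $R=C$ and the intercalate is principal by definition, so there is nothing to prove. If $|R\cap C|=1$, write the common index as $a$ and the other two as $b\in R$ and $d\in C$, with $a,b,d$ pairwise distinct. Among the four cells of the intercalate, $(a,a)$ is on the main diagonal and equals $0$, while $(b,d)$ is off-diagonal (since $b\neq d$) and hence nonzero; but $(a,a)$ and $(b,d)$ are precisely the two cells of the intercalate sharing neither a common row nor a common column, so the Latin-subsquare condition forces them to carry the same symbol, a contradiction. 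Hence this case cannot occur at all, and this is exactly where the constant-diagonal hypothesis does its work.

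The remaining case $|R\cap C|=0$ is where the only genuine content lies, and where I expect the main (mild) obstacle of organizing the argument cleanly; here $n=4$ is used crucially. Disjointness forces $R$ and $C$ to partition $\{1,2,3,4\}$, say $R=\{1,2\}$ and $C=\{3,4\}$, with intercalate values $L(1,3)=L(2,4)=x$ and $L(1,4)=L(2,3)=y$, $x\neq y$. Writing $z$ for the third nonzero symbol, row $1$ already shows $0,x,y$ in columns $1,3,4$, so $L(1,2)=z$; similarly row $2$ shows $0,y,x$ in columns $2,3,4$, forcing $L(2,1)=z$. Thus $L(1,2)=L(2,1)=z\neq 0$, so the submatrix on rows and columns $\{1,2\}$ is $\left(\begin{smallmatrix}0&z\\ z&0\end{smallmatrix}\right)$, which is a principal intercalate. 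Collecting the three cases, every intercalate yields a principal intercalate, which establishes the contrapositive and completes the proof of the lemma.
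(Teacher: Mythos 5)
Your proof is correct and follows essentially the same route as the paper's: both arguments reduce to showing that an intercalate must have disjoint row and column index sets (you do this by the case $|R\cap C|=1$ being impossible, the paper by noting the diagonal symbol cannot occur in the intercalate), and then use $n=4$ to force the complementary $2\times 2$ principal submatrix to be a Latin subsquare on the symbols $0$ and $z$. No gaps.
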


\begin{proof}
The necessary is trivial so we prove the sufficiency.
Suppose for a contradiction $L$ has an intercalate appearing at the intersection of rows $i,j$ ($i<j$) and columns $m,n$ ($m<n$).  
Assume that $L(i,m)=L(j,n)=a$ and $L(i,n)=L(j,m)=b$. 
Since this intercalate is not principal, $L(1,1)\not\in \{a,b\}$.
It follows that $i\not\in \{m,n\}$, $j\not\in \{m,n\}$ and thus $\{i,j,m,n\}=[4]$. 
Therefore, $L(i,j)=L(j,i)$ and thus there is a principal intercalate
appearing at the intersection of rows $i,j$ and columns $i,j$, a contradiction.
\end{proof}

\begin{thm}\label{complete}
\begin{align*}
  \chi^1(K_{n})= 
  \begin{cases}
        n-1
        &\text{if } n\neq 2,4,
        \\
        n
        &\text{otherwise}.
        \end{cases}
\end{align*}
\end{thm}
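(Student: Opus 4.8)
The plan is to establish the two easy universal bounds $\chi^1(K_n)\ge n-1$ and $\chi^1(K_n)\le n$ first, and then, for each $n$, decide which of the two values is correct by testing whether $K_n$ admits a $1$-defective incidence $(n-1)$-coloring. The lower bound $\chi^1(K_n)\ge\Delta(K_n)=n-1$ is immediate from \eqref{rela}. For the universal upper bound I would exhibit an explicit proper (hence, in particular, $1$-defective) incidence $n$-coloring: label the vertices $v_0,\dots,v_{n-1}$ and set $\varphi(v_i,v_iv_j)=j \pmod{n}$. A one-line check shows that the strong incidences at $v_i$ receive all colors except $i$, that every weak incidence of $v_i$ is colored $i$, and that $\varphi(v_i,v_iv_j)=j\ne i=\varphi(v_j,v_iv_j)$; thus \ref{a}, \ref{b}, and \ref{c} (even with $d=0$) hold, giving $\chi^1(K_n)\le n$. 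By Lemma \ref{keylemma2}, the whole theorem then reduces to deciding, for each $n$, whether there is an $n\times n$ Latin square with constant main diagonal and no principal intercalate: if yes then $\chi^1(K_n)=n-1$, and if no then $\chi^1(K_n)=n$.

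For $n\ne 2,4$ I would produce such a square. By Lemma \ref{latin} there is an $n\times n$ Latin square $L$ with no intercalates at all; the only hypothesis of Lemma \ref{keylemma2} it may fail is the constant-diagonal one. The key observation is that the cells carrying a fixed symbol, say $0$, form a permutation pattern (exactly one in each row and each column), so there is a column permutation $\sigma$ sending those cells onto the main diagonal. The reordered array $M(i,j)=L(i,\sigma(j))$ is again a Latin square, satisfies $M(i,i)=0$ for all $i$, and — because permuting columns neither creates nor destroys a $2\times2$ Latin subsquare — still has no intercalates, hence no principal intercalates. Lemma \ref{keylemma2} then yields a $1$-defective incidence $(n-1)$-coloring, so $\chi^1(K_n)=n-1$. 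I expect this column-permutation step, which reconciles the ``no intercalate'' conclusion of Lemma \ref{latin} with the ``constant diagonal'' hypothesis of Lemma \ref{keylemma2}, to be the only nonroutine point of the argument.

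For $n=4$ I would argue by contradiction that no $(n-1)=3$-coloring exists. If one did, Lemma \ref{keylemma2} would supply a $4\times4$ Latin square with constant diagonal and no principal intercalate; by Lemma \ref{intercalate} that square would then have no intercalate whatsoever, contradicting Lemma \ref{latin} in the case $n=4$. Hence $\chi^1(K_4)\ge 4$, which together with the universal upper bound gives $\chi^1(K_4)=4$. For $n=2$ the cleanest route is direct: condition \ref{b} forces the two incidences of the single edge of $K_2$ to receive different colors, so $\chi^1(K_2)\ge 2$; equivalently, the unique $2\times2$ Latin square with constant diagonal is itself a principal intercalate, so Lemma \ref{keylemma2} again excludes a $1$-coloring. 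Combined with $\chi^1(K_2)\le 2$ this yields $\chi^1(K_2)=2$, completing all cases.
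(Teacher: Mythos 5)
Your proposal is correct and follows essentially the same route as the paper: reduce via Lemma \ref{keylemma2} to Latin squares with constant diagonal and no principal intercalates, invoke Lemma \ref{latin} for $n\neq 2,4$ after a row/column permutation that centers a fixed symbol on the diagonal, and rule out $n=4$ via Lemma \ref{intercalate}. Your only additions — the explicit justification that permuting columns preserves intercalate-freeness, and the explicit proper incidence $n$-coloring giving the universal upper bound — are harmless refinements of steps the paper leaves to the reader.
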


\begin{proof}
First, it is trivial $\chi^1(K_2)=2$.
If $n\neq 2,4$, then there exists an $n\times n$ Latin square $L$ with no intercalates by Lemma \ref{latin}.
 Permute (if necessary) the rows of $L$ so that the entries at the main diagonal are the same. One can easily observe that the resulting Latin square $\widetilde{L}$ does not has any intercalate either, and thus has no principal intercalates.
Hence $K_n$ has a $1$-defective incidence $(n-1)$-coloring by Lemma \ref{keylemma2}. It follows that $\Delta(K_n)\leq \chi^1(K_{n})\leq n-1=\Delta(K_n)$.

If $K_4$ has a $1$-defective incidence $3$-coloring, then by Lemma \ref{keylemma2},
there is a $4 \times 4$ Latin square $L$ without principal intercalates whose 
entries at the main diagonal are the same, contradicting Lemma \ref{intercalate}.
Hence $\chi^1(K_{4})\geq 4$. On the other hand, one can easily construct a $1$-defective incidence $4$-coloring of $K_4$, implying $\chi^1(K_{4})\leq 4$.
\end{proof}

 To construct a $1$-defective incidence coloring of $K_n$ ($n=3$ or $n\geq 5$) with $n-1$ colors, we need by Lemma \ref{keylemma2}  to generate an $n \times n$ Latin square without principal intercalates whose entries at the main diagonal are the same. Lemma \ref{latin} guarantees the existence of such an Latin square. 

In the following, we discuss the recursive construction of such Latin squares and design a quadratic-time algorithm to generate an $n \times n$ Latin square satisfying those properties for every $n\neq 2,4$. 

An $n \times n$ matrix $M=(m_{ij})$ is a \textit{circulant} if it has the form 
$m_{ij}=a_{j-i}$ for some $a_0,a_1,\ldots,a_{n-1}$, where the subscript $j-i$ is taken modulo $n$.
In this paper we denote such a matrix by $(a_0,a_1,\ldots,a_{n-1})_{{\rm circ}}$.

\begin{lem}\label{lem:a}
Every $n\times n$ circulant matrix with $n$ being odd is a Latin square without principal intercalates.
\end{lem}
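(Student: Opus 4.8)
Every $n \times n$ circulant matrix with $n$ odd, of the form $(a_0, a_1, \ldots, a_{n-1})_{\rm circ}$, is a Latin square without principal intercalates.

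Wait, let me think carefully about what's being claimed.

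The plan is to exploit the rigid structure of a circulant matrix, whose main diagonal is constant and whose symmetry is governed by a single index difference; once this is in place, the odd order of the matrix rules out the only configuration that could yield a principal intercalate. First I would record that the circulant $M=(a_0,a_1,\ldots,a_{n-1})_{\rm circ}$ is indeed a Latin square. Row $i$ consists of the entries $m_{ij}=a_{j-i}$ as $j$ ranges over $\{0,\ldots,n-1\}$, which is merely a cyclic shift of $(a_0,\ldots,a_{n-1})$ and hence contains each of the $n$ distinct symbols exactly once; the same holds for every column. Thus each symbol occurs once per row and once per column, giving the Latin square property. (Here I read the hypothesis as declaring $a_0,\ldots,a_{n-1}$ to be the $n$ distinct symbols, which is exactly the condition under which a circulant is a Latin square.)

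The key structural observation I would make next is that the main diagonal is constant: $m_{ii}=a_{i-i}=a_0$ for every $i$. This both connects the lemma to the equal-diagonal hypothesis of Lemma~\ref{keylemma2} and reduces the search for a principal intercalate to a single scalar condition. A principal intercalate lives at rows $\{i,j\}$ and columns $\{i,j\}$, so the relevant submatrix is $\begin{pmatrix} m_{ii} & m_{ij}\\ m_{ji} & m_{jj}\end{pmatrix}$. Because $M$ is a Latin square, entries sharing a row or a column already differ, so this submatrix is an intercalate precisely when $m_{ii}=m_{jj}$ and $m_{ij}=m_{ji}$. The first equality is automatic since both sides equal $a_0$; hence a principal intercalate exists if and only if $a_{j-i}=a_{i-j}$ for some $i\neq j$.

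The heart of the argument is then a one-line congruence computation. Writing $t=j-i\not\equiv 0 \pmod n$, the condition $a_{j-i}=a_{i-j}$ reads $a_t=a_{-t}$; since the symbols are pairwise distinct, this forces the indices to coincide, i.e. $t\equiv -t \pmod n$, equivalently $2t\equiv 0 \pmod n$. As $n$ is odd, $2$ is invertible modulo $n$, so $2t\equiv 0$ yields $t\equiv 0 \pmod n$, contradicting $t\neq 0$. Therefore no principal intercalate can occur, which completes the proof.

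I do not anticipate a genuine obstacle, as the reasoning is elementary once the constant diagonal is identified. The single point deserving care, and where oddness is used essentially rather than incidentally, is the passage to $2t\equiv 0 \pmod n$: for even $n$ the choice $t=n/2$ satisfies $t\equiv -t$ and would manufacture a principal intercalate, which is exactly why the circulant construction fails in even order and the recursive construction developed afterward becomes necessary. I would flag this contrast explicitly so the reader sees that the parity assumption is doing real work.
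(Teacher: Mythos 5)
Your proof is correct and follows essentially the same route as the paper's: both reduce the absence of a principal intercalate to showing $m_{ij}\neq m_{ji}$ for $i\neq j$, which comes down to $j-i\not\equiv i-j \pmod{n}$ when $n$ is odd. You are slightly more explicit than the paper in verifying the Latin square property and in isolating where oddness is used, but the key computation is identical.
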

\begin{proof}
Let $M=(a_0,a_1,\ldots,a_{n-1})_{{\rm circ}}$ where $n$ is odd.
Clearly, $m_{ii}=a_0$ for each $1\leq i\leq n$.  It is sufficient to check that $a_{ij}\neq a_{ji}$ if $i>j$. 
Since $n$ is odd, $j-i\neq n+i-j$. This implies 
$m_{ij}=a_{j-i}\not=a_{n+i-j}=m_{ji}$.
\end{proof}

\begin{lem}\label{lem:b}
For a positive even integer $n$ that is not a power of 2, there exists an odd $m\geq 3$ such that the quotient of n divided by m is a positive power of 2
\end{lem}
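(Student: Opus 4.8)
The plan is to use the unique factorization of $n$ into its $2$-power part and its odd part. Specifically, I would write $n = 2^a m$, where $m$ is the largest odd divisor of $n$ and $a$ is the exponent of $2$ in the prime factorization of $n$ (equivalently, $a$ is the $2$-adic valuation of $n$). Such a decomposition exists and is unique for every positive integer, obtained by repeatedly dividing out factors of $2$ until an odd number remains.

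Next I would extract the consequences of the two hypotheses. Since $n$ is even, at least one factor of $2$ divides $n$, so the exponent satisfies $a \geq 1$; this is precisely what guarantees that $2^a$ is a positive power of $2$. Since $n$ is not a power of $2$, its odd part cannot be trivial, that is, $m \neq 1$; and because $m$ is odd, this forces $m \geq 3$.

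Finally I would read off the conclusion directly: the integer $m$ is odd with $m \geq 3$, and the quotient $n/m = 2^a$ with $a \geq 1$ is a positive power of $2$, which is exactly what the lemma asserts. I do not expect any genuine obstacle here; the only two things to check are the elementary inequalities $a \geq 1$ and $m \geq 3$, and both follow immediately from the assumptions that $n$ is even and that $n$ is not a power of $2$, respectively.
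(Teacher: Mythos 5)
Your proposal is correct and follows essentially the same route as the paper: both extract the $2$-adic valuation $a$ (the paper calls it $t$), write $n=2^a m$ with $m$ the odd part, and observe that $m\neq 1$ since $n$ is not a power of $2$, hence $m\geq 3$. Your explicit remark that $a\geq 1$ (needed for the quotient to be a \emph{positive} power of $2$) is a small point the paper leaves implicit, but otherwise the arguments coincide.
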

\begin{proof}
Let $t$ be the largest integer such that $2^t~|~n$ and let $n=2^t\cdot m$. If $m$ is even then $2^{t+1}~|~n$, contradicting the choice of $t$. If $m=1$, then $n=2^t$, contradicting the choice of $n$. Hence $m\geq 3$ is an odd.
\end{proof}

Let $A$ be an $n\times n$ matrix and let
$$
  A\nabla A=\left[
  \begin{matrix}
  A & A+nJ  \\
  B & A  \\
  \end{matrix}
  \right],
 $$
  where
 $$
  B=(A+nJ)^T\left[
  \begin{matrix}
  0 & E_{n-1}  \\
  1 & 0  \\
  \end{matrix}
  \right]$$
 and $J$ is an $n\times n$ matrix in which each element is $1$.
 
 We use $A^{(0\nabla)}$ and $A^{(1\nabla)}$ to denote $A$ and $A\nabla A$, respectively.
 By $A^{(t\nabla)}$ with integer $t\geq 2$, we denote $A^{((t-1)\nabla)}\nabla A^{((t-1)\nabla)}$. 
 Clearly, $A^{(t\nabla)}$ is a $2^t n \times  2^t n$ matrix. 

\begin{lem}\label{lem:c}
If $A$ is a Latin square without principal intercalates, then so does $A^{(t\nabla)}$ for each $t\geq 1$.
\end{lem}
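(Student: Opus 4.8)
The plan is to prove Lemma \ref{lem:c} by induction on $t$, so it suffices to establish the base case: if $A$ is an $n\times n$ Latin square without principal intercalates, then $A\nabla A$ is a $2n\times 2n$ Latin square without principal intercalates. The inductive step then follows immediately, since $A^{(t\nabla)}=A^{((t-1)\nabla)}\nabla A^{((t-1)\nabla)}$ and $A^{((t-1)\nabla)}$ is a Latin square without principal intercalates by the induction hypothesis. Throughout, I would write $M=A\nabla A$ with the four blocks labeled $M_{11}=A$ (top-left), $M_{12}=A+nJ$ (top-right), $M_{21}=B$ (bottom-left), and $M_{22}=A$ (bottom-right), where rows/columns $1,\ldots,n$ are the ``top/left'' indices and $n+1,\ldots,2n$ are the ``bottom/right'' indices.

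First I would verify that $M$ is a Latin square on the symbol set $\{0,\ldots,2n-1\}$. The top-left and bottom-right blocks use symbols $\{0,\ldots,n-1\}$ (those of $A$), while the top-right block $A+nJ$ shifts these into $\{n,\ldots,2n-1\}$. For the rows: each of the first $n$ rows sees the symbols of one row of $A$ (from $M_{11}$) together with the same symbols shifted by $n$ (from $M_{12}$), giving all of $\{0,\ldots,2n-1\}$ exactly once. The key structural fact is that $B$ is obtained from $(A+nJ)^T$ by a column permutation (the cyclic shift matrix $\left[\begin{smallmatrix}0&E_{n-1}\\1&0\end{smallmatrix}\right]$ acting on the right), so $B$ is again a Latin rectangle whose rows carry the high symbols $\{n,\ldots,2n-1\}$; I would check that each bottom row, reading $M_{21}$ then $M_{22}$, likewise covers $\{0,\ldots,2n-1\}$ once. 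The column condition is symmetric and uses that $B$ comes from a \emph{transpose}, so its columns inherit the row-completeness of $A+nJ$. This block bookkeeping is routine but must be written carefully.

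Next I would check the main-diagonal and principal-intercalate conditions, which is the heart of the lemma. The diagonal of $M$ splits into the diagonal of $M_{11}=A$ (all equal, say to the common value $a_0$, since $A$ has constant diagonal — this is implicit in ``no principal intercalates with equal diagonal'' and is what the recursion preserves) and the diagonal of $M_{22}=A$ (the same constant). A principal intercalate in $M$ is a pair of indices $\{p,q\}$ with $M(p,q)=M(q,p)$. I would split into cases by whether $p,q$ lie in the top block $\{1,\ldots,n\}$, the bottom block $\{n+1,\ldots,2n\}$, or straddle the two. When both indices lie in the same block, a violation $M(p,q)=M(q,p)$ reduces directly to a principal intercalate of $A$, contradicting the hypothesis. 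The genuinely delicate case is the mixed one, $p\le n<q$, where $M(p,q)$ comes from $M_{12}=A+nJ$ and $M(q,p)$ comes from $M_{21}=B$. Here I must use the explicit definition of $B$: writing $q=n+q'$ and unwinding the cyclic shift, the entry $M(q,p)=B(q',p)$ equals $(A+nJ)^T$ at a shifted index, i.e.\ a high symbol $A(\,\cdot\,)+n$, and I would show the equation $M(p,q)=M(q,p)$ forces an equality among entries of $A$ that the cyclic shift by one rules out — analogous to the odd-circulant argument of Lemma \ref{lem:a}, the shift guarantees the two referenced cells of $A$ are never a symmetric pair.

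The main obstacle I anticipate is precisely this mixed-block analysis: correctly tracking how the permutation matrix $\left[\begin{smallmatrix}0&E_{n-1}\\1&0\end{smallmatrix}\right]$ and the transpose relabel indices, and confirming that the resulting constraint cannot be satisfied. I would handle it by fixing clear index notation once and computing $M(q,p)$ as an explicit entry $A(r,s)+n$ for indices $r,s$ determined by $p$ and $q'$, then comparing with $M(p,q)=A(p,q')+n$ to see that $M(p,q)=M(q,p)$ would require $A(p,q')=A(r,s)$ at cells that the construction keeps out of any symmetric pairing. Once the base case is settled, I would close the induction in one sentence and remark that the constant-diagonal property is maintained at each doubling, so the hypotheses of Lemma \ref{keylemma2} remain available for the subsequent algorithmic construction.
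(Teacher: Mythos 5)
Your proposal is correct and follows essentially the same route as the paper: reduce to a single $\nabla$ step, write $A\nabla A$ blockwise, and verify $M(p,q)\neq M(q,p)$ case by case, with the mixed top/bottom case handled by unwinding the cyclic column shift in $B$. One small correction to your anticipated mechanism in that mixed case: the two cells of $A$ you end up comparing are $A(p,q')$ and $A(p-1,q')$ (row index mod $n$), which lie in the same \emph{column} and different rows, so their distinctness follows from the Latin property of $A$ alone rather than from any ``symmetric pairing'' or intercalate-freeness of $A$.
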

\begin{proof}
It is sufficient to show that $A\nabla A$ is a Latin square without principal intercalates.

Let $A=(a_{ij})_{n\times n}$ and $A\nabla A=(m_{ij})_{2n\times 2n}$.
According to the construction of $A\nabla A$, 
\begin{align*}
        m_{ij}
        =
        \begin{cases}
        a_{ij}
        &\text{if } 1\leq i \leq n~\text{and }1\leq j \leq n;
        \\
        a_{i,j-n}+n
        &\text{if } 1\leq i\leq n~\text{and }n+1\leq j\leq 2n;
        \\
         a_{n,i-n}+n
        &\text{if } n+1\leq i\leq 2n~\text{and }j=1;
        \\
        a_{j-1,i-n}+n
        &\text{if } n+1\leq i\leq 2n~\text{and }2\leq j\leq n;
        \\
        a_{i-n,j-n}
        &\text{if } n+1\leq i\leq 2n~\text{and }n+1\leq j\leq 2n.
        \end{cases}
   \end{align*}
 To complete the proof, we show that  $m_{ij}\neq m_{ji}$ for each pair of $i$ and $j$ with $i\neq j$.  
 
 If $1\leq i\leq n$ and $1\le j\leq n$, then $m_{ij}=a_{ij}$ and $m_{ji}=a_{ji}$. Since $A$ is a Latin square without principal intercalates, $a_{ij}\neq a_{ji}$. It follows that $m_{ij}\neq m_{ji}$.

 If $1\leq i\leq n$ and $n+1\leq j\leq 2n$, then $m_{ij}=a_{i,j-n}+n$, $m_{ji}=a_{i-1,j-n}+n$ if $i\neq 1$, and $m_{ji}=a_{n,j-n}+n$ if $i=1$. Since $a_{i,j-n}$ and $a_{i-1,j-n}$ (resp.\,$a_{1,j-n}$ and $a_{n,j-n}$) are in the same column and not in the same row of $A$, 
they are not equal and thus $m_{ij}\neq m_{ji}$.
 
If $n+1\leq i \leq 2n$ and $n+1\leq j \leq 2n$, then $m_{ij}=a_{i-n,j-n}\neq a_{j-n,i-n}=m_{ji}$, since $A$ is a Latin square without principal intercalates.
 
Hence in each case we have $m_{ij}\neq m_{ji}$, as desired. 
\end{proof}

\begin{algorithm}[htp]
\BlankLine
\KwIn{An integer $n\neq 2,4$;}
\KwOut{An $n \times n$ Latin square $L$ without principal intercalates  whose entries at the main diagonal are 0.}
\BlankLine

\eIf{$n$ is odd}
{$L \gets (0,1,\ldots,n-1)_{{\rm circ}}$}
{\eIf{$n=2^t$ for some integer $t\geq 3$}
{
$A \gets \begin{bmatrix}
0 & 1 & 2 & 3 & 4 & 5 & 6 & 7\\
2 & 0 & 7 & 4 & 5 & 3 & 1 & 6\\
3 & 6 & 0 & 7 & 2 & 1 & 4 & 5\\
4 & 5 & 6 & 0 & 3 & 2 & 7 & 1\\
7 & 4 & 5 & 1 & 0 & 6 & 3 & 2\\
1 & 2 & 4 & 6 & 7 & 0 & 5 & 3\\
5 & 3 & 1 & 2 & 6 & 7 & 0 & 4\\
6 & 7 & 3 & 5 & 1 & 4 & 2 & 0 
\end{bmatrix}$\\
$L \gets A^{((t-3)\nabla)}$
}{
Find the largest integer $t$ such that $2^t~|~n$ and let $m=n/2^t$\\
$A\gets (0,1,\ldots,m-1)_{{\rm circ}}$\\
$L \gets A^{(t\nabla)}$
}
}
\caption{\texttt{LATIN-SQARE}(n)}\label{algo:b}
\end{algorithm}

\begin{algorithm}[htp]
\BlankLine
\KwIn{An integer $n\neq 2,4$;}
\KwOut{A $1$-defective incidence coloring of $K_n$ using colors from $\{1,2,\ldots,n\}$.}
\BlankLine
\tcc{The set of vertices of $K_n$ is $\{v_1,v_2,\ldots,v_n\}$.}
$L \gets$ \texttt{LATIN-SQARE}(n)\\
\For{$i=1:n$}
{\For{$j=1:n$ and $j\neq i$}
{Color the incidence $(v_i,v_iv_j)$ with $L(i,j)$}}
\caption{\texttt{COLOR-COMPLETE-GRAPH}(n)}\label{algo:c}
\end{algorithm}

The following Algorithm \ref{algo:b} outputs in $O(n^2)$ time an $n \times n$ Latin square $L$ without principal intercalates  whose entries at the main diagonal are 0 whenever we input an integer $n\neq 2,4$. It works by Lemmas \ref{lem:a}, \ref{lem:b}, and \ref{lem:c}. Afterwards, by Lemma \ref{keylemma2}, we can construct a $1$-defective incidence coloring of $K_n$ using colors from $\{1,2,\ldots,n\}$ according to Algorithm \ref{algo:c}, which also runs in $O(n^2)$ time.

To close this section, we present the following.

\begin{thm}\label{complete2}
$\chi^d(K_n)=n-1$ for every integer $d\geq 2$ and $n\neq 2$.
\end{thm}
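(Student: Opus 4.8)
The plan is to combine the monotonicity chain \eqref{rela} with the value of $\chi^1(K_n)$ already determined in Theorem \ref{complete}, and then to dispose of the single exceptional order $n=4$ by an explicit construction. First I would record the universal lower bound $\chi^d(K_n)\ge \Delta(K_n)=n-1$, which holds for every $d$ by \eqref{rela}. Consequently, for each fixed $n$ it suffices to produce one $2$-defective incidence $(n-1)$-coloring of $K_n$: such a coloring forces $\chi^2(K_n)\le n-1$, and then \eqref{rela} gives $\chi^d(K_n)\le \chi^2(K_n)\le n-1$ for all $d\ge 2$, matching the lower bound.

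Second, for every $n\neq 2,4$ (in particular $n=1,3$ and all $n\ge 5$) Theorem \ref{complete} already yields $\chi^1(K_n)=n-1$. Since any $1$-defective coloring is a fortiori $2$-defective, \eqref{rela} gives $\chi^2(K_n)\le \chi^1(K_n)=n-1$, and combined with the lower bound we obtain $\chi^d(K_n)=n-1$ for all $d\ge 2$. Thus the entire theorem reduces to the lone case $n=4$.

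Third, and this is the only genuine content, I would construct an explicit $2$-defective incidence $3$-coloring of $K_4$. Here the Latin-square correspondence of Lemma \ref{keylemma2} gives no help, because Lemma \ref{intercalate} shows that no admissible $4\times4$ Latin square exists; this is precisely the reason $\chi^1(K_4)=4$. The key observation is that relaxing from $d=1$ to $d=2$ weakens condition \ref{c} for $K_4$ from ``each column (the weak incidences of a vertex) contains every color exactly once'' to merely ``no color occurs three times in a column,'' i.e.\ no column is monochromatic, which is easily arranged. Concretely, writing $L(i,j)=\varphi(v_i,v_iv_j)$, I would take
\[
L=\begin{pmatrix} * & 0 & 1 & 2\\ 1 & * & 2 & 0\\ 2 & 0 & * & 1\\ 0 & 1 & 2 & * \end{pmatrix},
\]
each of whose rows is a permutation of $\{0,1,2\}$ (yielding \ref{a} together with $\varphi(I_{v_i})=\{0,1,2\}$), whose transposed off-diagonal entries differ, i.e.\ $L(i,j)\neq L(j,i)$ (yielding \ref{b}), and in each of whose columns no color appears three times (yielding \ref{c} with $d=2$). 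Verifying these three properties is routine. This shows $\chi^2(K_4)\le 3$, and with the lower bound and \eqref{rela} we conclude $\chi^d(K_4)=3$ for all $d\ge 2$.

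The main obstacle is the case $n=4$: one must recognize that the intercalate obstruction of Lemma \ref{intercalate}, which defeats the $1$-defective $(n-1)$-coloring, disappears once $d\ge2$, so that a direct ad hoc array such as the $L$ above replaces the Latin-square machinery; all remaining orders follow immediately from Theorem \ref{complete} and monotonicity.
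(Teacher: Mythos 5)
Your proposal is correct and takes essentially the same route as the paper: reduce to the case $n=4$ via Theorem \ref{complete} and the monotonicity chain \eqref{rela}, then exhibit a $2$-defective incidence $3$-coloring of $K_4$. The only difference is that the paper merely asserts such a coloring of $K_4$ ``can be easily constructed,'' whereas you supply an explicit array, and I verified that your $L$ indeed satisfies \ref{a}, \ref{b}, and \ref{c} with $d=2$.
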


\begin{proof}
Theorem \ref{complete} and \eqref{rela} imply the result for $n\neq 1,4$. The case of $n=1$ is trivial and 
a $2$-defective $3$-coloring of $K_4$ can be easily constructed. Hence $\chi^d(K_4)=3$ for every integer $d\geq 2$.
\end{proof}

Combining Theorems \ref{complete} with \ref{complete2}, we conclude the following.

\begin{cor}

\begin{align*}
  \defi(K_{n})= 
  \begin{cases}
        1
        &\text{if } n\neq 2, 4,
        \\
        2
        &\text{if } n=4,
        \\
        \infty
        &\text{if } n=2.
        \end{cases}
\end{align*}
\end{cor}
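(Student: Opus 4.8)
The plan is to read the value of $\defi(K_n)=\min\{k~|~\chi^k_i(K_n)=\Delta(K_n)\}$ directly off the two theorems just proved, using the monotonicity chain \eqref{rela}. Since $\Delta(K_n)=n-1$, the quantity $\defi(K_n)$ is just the first index at which the non-increasing sequence $\chi^0_i(K_n)\ge\chi^1_i(K_n)\ge\chi^2_i(K_n)\ge\cdots$ drops to $n-1$ (or $\infty$ if it never does). So the whole argument reduces to locating that first index in each of the three regimes, and no new construction is needed beyond what Theorems \ref{complete} and \ref{complete2} provide.

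For $n\neq 2,4$ (the non-degenerate values being $n=3$ and $n\ge 5$; the isolated vertex $K_1$ is immediate), I would first note that $\chi^1_i(K_n)=n-1=\Delta(K_n)$ by Theorem \ref{complete}, which gives $\defi(K_n)\le 1$. For the matching lower bound I would invoke the classical fact $\chi_i(G)\ge\Delta(G)+1$ recalled in the introduction: since $\chi^0_i(K_n)=\chi_i(K_n)\ge n>n-1$, the index $k=0$ does not witness $\Delta$, so $\defi(K_n)\ge 1$. Combining the two bounds yields $\defi(K_n)=1$. For $n=4$ the same two facts show $\chi^0_i(K_4)=\chi_i(K_4)\ge 5$ and, by Theorem \ref{complete}, $\chi^1_i(K_4)=4$, both strictly above $\Delta(K_4)=3$; hence $\defi(K_4)\ge 2$. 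The upper bound $\defi(K_4)\le 2$ then follows from Theorem \ref{complete2}, which gives $\chi^2_i(K_4)=3=\Delta(K_4)$. Thus $\defi(K_4)=2$.

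The genuinely different case, and the one I expect to require a separate observation rather than a citation, is $n=2$. Here $\Delta(K_2)=1$, while condition \ref{b} of Definition \ref{def:3} forces the two incidences $(v_1,v_1v_2)$ and $(v_2,v_1v_2)$ to receive distinct colors regardless of the defect parameter $d$; hence $\chi^d_i(K_2)=2>1=\Delta(K_2)$ for every $d\ge 0$. Consequently the defining set $\{k~|~\chi^k_i(K_2)=\Delta(K_2)\}$ is empty, and $\defi(K_2)=\infty$ by the convention $\min\emptyset=\infty$. The key point, and the only conceptual obstacle in the whole corollary, is that condition \ref{b} is insensitive to $d$, so enlarging the allowed defect never lowers $\chi^d_i(K_2)$ below $2$; this is precisely why the defectivity of $K_2$ is infinite while every larger complete graph has finite defectivity. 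The remaining cases are pure bookkeeping against Theorems \ref{complete} and \ref{complete2}.
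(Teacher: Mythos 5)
Your proposal is correct and follows essentially the same route as the paper, which simply reads the corollary off Theorems \ref{complete} and \ref{complete2} via the monotonicity chain \eqref{rela}, together with the observation (stated in the paper as a note after the corollary) that condition \ref{b} forces $\chi^d_i(K_2)=2>1=\Delta(K_2)$ for all $d$. One harmless slip: the general bound $\chi_i(G)\geq\Delta(G)+1$ gives $\chi^0_i(K_4)\geq 4$, not $\geq 5$ as you wrote, but $4>3=\Delta(K_4)$ is all your argument needs.
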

Note that $\chi^d(K_2)=2$ for every integer $d\geq 1$.


\section{Outerplanar graphs} \label{sec:outerplanar}

\begin{defn}\label{def:4}
A \textit{conditional incidence $\Delta$-coloring} of $G$ is 
an incidence $\Delta$-coloring such that \vspace{-2mm}
\begin{enumerate}[label=\textbf{$(\alph*\ref{def:4})$}]\setlength{\itemsep}{-3pt}
\item \label{i} $(u,uv)$ and $(v,uv)$ receive distinct colors for each edge $uv$;
\item \label{ii} each color appears at most once among $A_u$ for each vertex $u$;
\item \label{iii} each color appears at most once among $I_u$ for each vertex $u$;
\item \label{iv} each color appears at least once among $A_u\cup I_u$ for each vertex $u$ with 
$\deg_G(u)\geq \Delta-1$.  \vspace{-2mm}
\end{enumerate}
\end{defn}

\begin{obs}\label{obs}
Any conditional incidence $\Delta$-coloring of $G$ is a $1$-defective incidence coloring. \hfill$\square$ 
\end{obs}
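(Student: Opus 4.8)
The plan is to verify directly that a conditional incidence $\Delta$-coloring $\varphi$ of $G$ satisfies each of the three requirements \ref{a}, \ref{b}, and \ref{c} (with $d=1$) of Definition \ref{def:3}, simply by matching them one-to-one against the defining conditions of Definition \ref{def:4}. Since a conditional incidence $\Delta$-coloring is by definition an incidence $\Delta$-coloring, it is already a mapping $\varphi:I(G)\to[\Delta]$, so it is a legitimate candidate to be a $1$-defective incidence coloring and only the three structural conditions need checking.

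First I would observe that condition \ref{iii}, asserting that each color appears at most once among $I_u$, says precisely that the strong incidences $(u,uv)$ and $(u,uw)$ are colored differently for any two distinct $v,w\in N_G(u)$; this is exactly \ref{a}. Next, \ref{b}---that $\varphi(u,uv)\neq\varphi(v,uv)$ for every $v\in N_G(u)$---is nothing but condition \ref{i} read edge by edge. Finally, for \ref{c} with $d=1$ I would use condition \ref{ii}, which guarantees that \emph{every} color appears at most once among $A_u$ for each vertex $u$; a fortiori, every color lying in the subset $\varphi(I_u)$ appears at most once among $A_u$, and this is exactly \ref{c} with $d=1$.

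There is essentially no real obstacle here; the argument is a straightforward bookkeeping match between the two definitions, and the proof can be written in a few lines. The only points worth flagging are, first, that \ref{c} is actually deduced from the strictly stronger condition \ref{ii} (which bounds the multiplicity of \emph{all} colors on $A_u$, not merely those appearing in $\varphi(I_u)$), so the implication holds with room to spare; and second, that condition \ref{iv} plays no role whatsoever in establishing the observation. Condition \ref{iv} is a surplus requirement of the conditional coloring that will instead be exploited in the inductive constructions for outerplanar graphs later in this section, rather than being needed to guarantee $1$-defectiveness.
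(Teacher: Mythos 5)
Your proposal is correct and is exactly the routine definition-matching the paper has in mind; the paper omits the proof entirely (the observation is stated with a terminating $\square$), treating conditions \ref{iii}, \ref{i}, and \ref{ii} as giving \ref{a}, \ref{b}, and \ref{c} with $d=1$ respectively, with \ref{iv} playing no role. Your write-up, including the remarks that \ref{ii} is strictly stronger than needed and that \ref{iv} is reserved for the later inductive arguments, is accurate.
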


An \textit{outplanar graph} is a graph that can be embedded in the plane in such a way that all vertices lie on the outer face.
In this section we show the following theorem.

\begin{thm}\label{thm:outerplanar}
If $G$ is an outerplanar graph with $\Delta(G)\leq \Delta$ and $\Delta\geq 4$, then $G$ has a conditional incidence $\Delta$-coloring.
\end{thm}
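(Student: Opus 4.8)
The plan is to argue by taking a minimal counterexample $G$, minimal with respect to $|V(G)|+|E(G)|$; since the components of $G$ can be coloured independently (every condition in Definition \ref{def:4} is vertex-local), I may assume $G$ is connected. The guiding observation is that the conditions sort vertices by degree: a vertex of degree $\Delta$ satisfies \ref{iv} automatically once \ref{iii} holds, because its $\Delta$ pairwise-distinct strong incidences already exhaust $[\Delta]$; a vertex of degree at most $\Delta-2$ is never constrained by \ref{iv}; so the only genuinely delicate vertices for \ref{iv} are those of degree exactly $\Delta-1$. The whole proof then reduces to exhibiting a small reducible configuration at which a conditional $\Delta$-colouring of a proper subgraph, supplied by minimality, can be extended.

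First I would dispose of a vertex $v$ of degree $1$ with neighbour $u$. Colour $G-v$ and extend by choosing $\varphi(u,uv)$ and $\varphi(v,uv)$. Writing $P=[\Delta]\setminus\varphi(I_u)$ and $Q=[\Delta]\setminus\varphi(A_u)$ for the colours missing at $u$ in $G-v$, condition \ref{iii} forces $\varphi(u,uv)\in P$, condition \ref{ii} forces $\varphi(v,uv)\in Q$, and \ref{i} asks them to differ. The key point is that a colour can fail to appear in $A_u\cup I_u$ afterwards only if it lies in $P\cap Q$ and equals neither new colour; when $\deg_G(u)=\Delta$ the hypothesis \ref{iv} at $u$ in $G-v$ already gives $P\cap Q=\emptyset$, and when $\deg_G(u)=\Delta-1$ one has $|P|=|Q|=2$ and an easy case check produces an admissible pair covering $P\cap Q$. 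Since $\deg_G(v)=1<\Delta-1$, vertex $v$ imposes nothing. Hence I may assume every vertex of $G$ has degree at least $2$, and as $G$ is outerplanar it then contains a vertex $v$ of degree exactly $2$; let $x,y$ be its neighbours.

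The heart of the argument is the extension across such a degree-$2$ vertex. I would delete $v$, colour $G-v$ by minimality, and then select the four colours $\alpha=\varphi(v,vx)$, $\beta=\varphi(x,vx)$, $\gamma=\varphi(v,vy)$, $\delta=\varphi(y,vy)$ subject to: $\beta,\delta$ missing from the strong sets at $x,y$ (condition \ref{iii}); $\alpha,\gamma$ missing from the weak sets at $x,y$ (condition \ref{ii}); the inequalities $\alpha\ne\gamma$ and $\beta\ne\delta$ (conditions \ref{iii},\ref{ii} at $v$), together with $\alpha\ne\beta$ and $\gamma\ne\delta$ (condition \ref{i}); and \ref{iv} at $x$ and $y$. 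Because $\Delta\ge 4$ forces $\deg_G(v)=2<\Delta-1$, condition \ref{iv} never concerns $v$ itself. When at least one of $x,y$ has degree at most $\Delta-1$ there are at least two admissible colours for each incidence on that side, which gives enough slack to meet every constraint by a short counting argument in the spirit of the degree-$1$ case.

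The main obstacle is the remaining case $\deg_G(x)=\deg_G(y)=\Delta$. Then each of $\alpha,\beta,\gamma,\delta$ is forced to be the unique colour missing from the relevant strong or weak set at a degree-$\Delta$ neighbour, and these forced values can collide in $\alpha=\gamma$ or $\beta=\delta$, so a bare deletion of $v$ need not extend. To break this I would use the outerplanar structure twice. First, since outerplanar graphs contain no $K_{2,3}$-minor, $x$ and $y$ have at most two common neighbours, which tightly limits the picture around $v$ and, in particular, whether $xy\in E(G)$; if $xy\in E(G)$ then the triangle $vxy$ supplies the already-coloured incidences $(x,xy)$ and $(y,xy)$ as levers. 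Second, I expect to remove any forced collision by a local recolouring at $x$ or $y$—recolouring an incidence on $xy$, or on a suitable chord, so as to shift the unique missing colour at one endpoint and destroy the equality $\alpha=\gamma$ or $\beta=\delta$ while preserving \ref{i}--\ref{iv}—or, should that fail, by choosing $v$ more carefully among the degree-$2$ vertices guaranteed to lie on a boundary face of a fixed outerplanar embedding, so that not both of its neighbours have degree $\Delta$. Proving that such a bounded recolouring always exists and does not propagate is the delicate, case-heavy core of the theorem; everything else is routine missing-colour bookkeeping.
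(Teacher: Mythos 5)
There is a genuine gap, and it sits exactly where you yourself say ``the delicate, case-heavy core'' lies: your induction bottoms out at an \emph{arbitrary} degree-$2$ vertex $v$ with neighbours $x,y$, and you have no argument when $\{\deg_G(x),\deg_G(y)\}\subseteq\{\Delta-1,\Delta\}$. Deleting $v$ can leave each of $\alpha,\beta,\gamma,\delta$ uniquely forced by \ref{ii}--\ref{iv} at a degree-$\Delta$ neighbour into a collision $\alpha=\gamma$ or $\beta=\delta$, and neither of your escape routes is substantiated: a ``local recolouring that does not propagate'' is precisely the hard content of the theorem, and there is no guarantee that some degree-$2$ vertex has a low-degree neighbour if all you use is $\delta(G)\geq 2$ together with $K_{2,3}$-minor-freeness. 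The paper closes this hole with the structural Lemma \ref{lem:outerplanar}: an outerplanar graph contains not merely a degree-$2$ vertex but one of the refined configurations \ref{c1}--\ref{c4}, in which the degree-$2$ vertex either has a degree-$2$ neighbour, or lies on a triangle with a degree-$3$ companion, or sits in two intersecting triangles around a degree-$4$ vertex. These configurations pin down the degrees of the relevant neighbours, and their reducibility is then established in Lemmas \ref{lem:no234triangle}, \ref{no-T1}, \ref{no-T2} and \ref{no-T3} --- with Lemma \ref{no-T1} passing to a \emph{minor} $G-\{v,w\}+xy$ rather than a subgraph (hence the paper's notion of $\varrho$-$\Delta$-criticality) and being verified by an exhaustive computer search over all boundary colourings. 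Without an analogue of this structure theorem your induction cannot close.

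A secondary underestimate: the case you dismiss as having ``enough slack'', where a neighbour of $v$ has degree $\Delta-1$, is not a short counting argument. The paper's Lemma \ref{lem:2-no-twominus1} (both neighbours of degree $\Delta-1$) requires a case analysis whose final subcase must uncolour the edge $uv$ as well, construct two competing extensions $\varphi_1,\varphi_2$ of the colouring of $G-\{uv,uw\}$, and show that at least one of them survives; a single greedy extension can genuinely fail there because of condition \ref{iv}. The mixed case (one neighbour of degree $\Delta-1$, one of degree $\Delta$) is not reduced at all in your scheme; in the paper it never has to be, because it does not occur inside the configurations \ref{c2}--\ref{c4} once the earlier lemmas are in place. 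Your opening reductions (connectivity, degree-$1$ vertices) do match Lemmas \ref{lem:connected} and \ref{lem:minimumdegree2}, but the proof is incomplete from the degree-$2$ step onward.
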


Combining Observation \ref{obs} and Theorem \ref{thm:outerplanar}, we immediately obtain the following.

\begin{thm}\label{thm:outerplanar2}
$\chi^1(G)=\Delta(G)$ if $G$ is an outerplanar graph with $\Delta(G)\geq 4$. \hfill $\square$
\end{thm}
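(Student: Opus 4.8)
The plan is to prove Theorem~\ref{thm:outerplanar} by induction on $|E(G)|$, keeping the palette $[\Delta]$ with $\Delta\ge 4$ fixed throughout and arguing via a minimal counterexample $G$. Because the four requirements \ref{i}--\ref{iv} of Definition~\ref{def:4} are all local---each constrains only the incidences around a single vertex or edge---a disconnected graph may be coloured one component at a time, so I may assume $G$ is connected, with the edgeless graph as base case. The driving structural fact is that every outerplanar graph is $2$-degenerate, hence $G$ has a vertex $v$ with $\deg_G(v)\le 2$. I would delete $v$ (in one subcase after first suppressing it into a chord), apply the inductive hypothesis to the strictly smaller outerplanar graph of maximum degree $\le\Delta$, and then extend the colouring to the incidences created at $v$. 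The reason for proving the stronger ``conditional'' statement rather than attacking Theorem~\ref{thm:outerplanar2} head-on is that condition \ref{iv} is carried as an invariant: it is precisely the reservoir that makes each extension feasible at neighbours of degree $\Delta$.

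When $\deg_G(v)\le 1$, say with neighbour $u$, deleting $v$ lowers $\deg(u)$ by one. If $\deg_G(u)=\Delta$, then in $G-v$ the vertex $u$ has degree $\Delta-1$, so by \ref{iv} the unique colour missing from $\varphi(I_u)$ already occurs in $A_u$; assigning that colour to $(u,uv)$ completes $I_u$ and leaves exactly one admissible colour for $(v,uv)$, so the extension is forced and valid. If $\deg_G(u)=\Delta-1$, a short check on the two colours missing from $I_u$ and the two missing from $A_u$ shows one can always choose $(u,uv)$ and $(v,uv)$ so as to re-establish \ref{iv} at $u$, and for $\deg_G(u)\le\Delta-2$ there is slack to spare. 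When $\deg_G(v)=2$ with non-adjacent neighbours $u,w$, I would suppress $v$, i.e.\ pass to $G-v+uw$ (still simple, outerplanar, and of maximum degree $\le\Delta$), colour it inductively, and then \emph{inherit}: writing $\alpha,\beta$ for the colours of $(u,uw),(w,uw)$, set $\varphi(u,uv)=\alpha$, $\varphi(v,uv)=\beta$, $\varphi(v,vw)=\alpha$, $\varphi(w,wv)=\beta$. This leaves the degrees and the colour sets $I_u,A_u,I_w,A_w$ literally untouched, so \ref{ii}--\ref{iv} at $u,w$ survive automatically, while $\alpha\ne\beta$ (from \ref{i} on $uw$) delivers \ref{i} and \ref{ii} at the new vertex $v$, whose degree $2<\Delta-1$ exempts it from \ref{iv}.

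The main obstacle is the remaining subcase, a degree-$2$ vertex $v$ whose neighbours $u,w$ are adjacent---a triangular ear---since the chord $uw$ must now be retained and cannot absorb $v$. Deleting $v$ and attempting to inherit fails in the worst case $\deg_G(u)=\deg_G(w)=\Delta$: here \ref{iii} forces $(u,uv)$ to the single colour $c_u$ missing from $I_u$ and $(w,wv)$ to the single colour $c_w$ missing from $I_w$, and if $c_u=c_w$ the two weak incidences of $v$ coincide, violating \ref{ii} at $v$. I expect to resolve this with a local Kempe-type recolouring along the retained chord---for instance recolouring $(u,uw)$ from $\alpha$ to $c_u$ and releasing $\alpha$ to $(u,uv)$, which keeps $I_u$ a full palette while changing the value forced at the apex. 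Verifying that such a swap can be performed without propagating a fresh conflict among the weak incidences of $w$ is the delicate point, and it is exactly here that a fourth colour is indispensable, matching the stated sharpness of the hypothesis $\Delta\ge4$; the intermediate regimes $\deg_G(u),\deg_G(w)\le\Delta-1$ relax the forcing and yield to the same counting as in the pendant case. Finally, since every reduction removes one vertex and every extension is an explicit colour choice (or a single bounded recolouring), the induction unwinds into a linear sequence of constant-work steps, giving the promised polynomial-time construction.
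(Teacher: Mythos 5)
Your overall architecture (minimal counterexample, proving the stronger ``conditional'' statement of Theorem~\ref{thm:outerplanar} so that \ref{iv} serves as the extension reservoir, then reducing a low-degree vertex) is the same as the paper's, and your pendant-vertex and chordless degree-$2$ reductions are sound --- indeed the suppression-and-inherit trick for a degree-$2$ vertex with non-adjacent neighbours is cleaner than the paper's corresponding edge-deletion arguments. But the proof has a genuine gap exactly where you flag ``the main obstacle'': the triangular ear. Relying only on $2$-degeneracy, your minimal counterexample could have \emph{every} vertex of degree at most $2$ sitting as the apex of a triangle whose two base vertices both have degree $\Delta$, and you never actually reduce this configuration. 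Your sketched Kempe swap does not work as stated: after deleting the apex $v$, both $u$ and $w$ have degree $\Delta-1$, so $A_w$ already carries $\Delta-1$ distinct colours; recolouring $(u,uw)$ from $\alpha$ to $c_u$ (the colour missing from $\varphi(I_u)$) re-injects $c_u$ into $A_w$ and violates \ref{ii} at $w$ unless $c_u$ happens to be the unique colour missing from $\varphi(A_w)$. So the swap is available only under a coincidence you cannot guarantee, and no amount of ``slack from the fourth colour'' is exhibited.

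The paper avoids ever having to reduce this hard configuration: it invokes the structural result of Lemma~\ref{lem:outerplanar} (from \cite{WZ99}), which guarantees that an outerplanar graph contains not just \emph{some} vertex of degree at most $2$ but one of the specific configurations \ref{c1}--\ref{c4}, in which any triangle through a degree-$2$ vertex comes with a second vertex of degree $3$ or $4$. Even with that extra leverage, discharging the triangle cases costs the paper Lemmas \ref{lem:2-no-minus2}, \ref{lem:2-no-twominus1}, \ref{lem:no234triangle}, \ref{no-T1}, \ref{no-T2} and \ref{no-T3}, the last three of which treat multi-vertex configurations (two intersecting triangles) and one of which is verified by an exhaustive computer search over colour patterns. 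To complete your proof you would either need to import that structure theorem and redo essentially the paper's case analysis, or supply an actual argument (not a hope) that a triangular ear with two degree-$\Delta$ base vertices is reducible --- a claim the paper neither proves nor needs.
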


\begin{rem}\label{rem1}
The bound 4 in Theorems \ref{thm:outerplanar} and \ref{thm:outerplanar2} are both sharp.
To see this, we look at the outerplanar graph $H$ derived from a cycle $uxvy$ of length four via adding an edge $uv$ and a pendant edge $xz$. 
It has maximum degree 3. 
Suppose that $H$ has a $1$-defective incidence $3$-coloring $\varphi$.  
Assume by symmetry that $\varphi(u,ux)=1$, $\varphi(u,uv)=2$ and $\varphi(u,uy)=3$. It follows that $\varphi(v,uv)\neq 2$.
If $\varphi(v,uv)=1$, then $\varphi(x,xv)=1$, $\varphi(y,uy)=\varphi(v,vy)=2$, and $\varphi(x,ux)=\varphi(v,vx)=\varphi(y,vy)=3$,
which forces $\varphi(x,xz)=\varphi(z,xz)=2$. 
If $\varphi(v,uv)=3$, then $\varphi(y,uy)=\varphi(v,vy)=\varphi(x,vx)=1$, $\varphi(v,xv)=\varphi(x,ux)=2$, and $\varphi(y,vy)=3$, forcing $\varphi(x,xz)=\varphi(z,xz)=3$. 
Each case contradicts \ref{b} of Definition \ref{def:3}.
Hence there exist outerplanar graphs with maximum degree 3 that are not $1$-defectively incidence $3$-colorable, and thus not conditionally incidence $3$-colorable.
\end{rem}

A graph $H$ is a \textit{minor} of a graph $G$ if a copy of $H$ can be obtained from $G$ via repeated edge deletion and/or edge contraction. 
Chartrand and Harary \cite{AIHPB_1967__3_4_433_0} first pointed out that a graph is outerplanar if and only if it is $\{K_4,K_{2,3}\}$-minor-free.
The following classic structural theorem on outerplanar graphs were applied in quite a lot of papers.

\begin{lem}\cite{WZ99}\label{lem:outerplanar}
Every outerplanar graph $G$ contains one of the following configurations
\begin{enumerate}[label={\rm (C\arabic*)}]\setlength{\itemsep}{-3pt}
\item \label{c1} a vertex of degree at most 1;
\item \label{c2} an edge $uv$ with $\deg_G(u)=\deg_G(v)=2$;
\item \label{c3} a triangle $uvw$ with $\deg_G(u)=2$ and $\deg_G(v)=3$;
\item \label{c4} two intersecting triangle $uvx$ and $uwy$ such that $\deg_G(u)=4$ and $\deg_G(v)=\deg(w)=2$.
\end{enumerate}
\end{lem}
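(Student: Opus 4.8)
\emph{Proof proposal.} The plan is to argue by contradiction: assume $G$ is an outerplanar graph containing none of \ref{c1}--\ref{c4} and force a contradiction from the sparsity of outerplanar graphs. Since each component may be treated separately, and a component that is a single vertex or contains a vertex of degree at most $1$ already yields \ref{c1}, I may assume $G$ is connected with $\delta(G)\ge 2$. First I would reduce to the $2$-connected case by passing to an end-block $B$ of the block decomposition of $G$: $B$ is again outerplanar, and every vertex of $B$ other than its unique cut-vertex keeps its $G$-degree, so a configuration found inside $B$ and avoiding that cut-vertex survives in $G$; the blocks $K_2$ and $K_3$ are disposed of directly, producing \ref{c1} and \ref{c2} respectively. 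It then suffices to find a configuration in a $2$-connected outerplanar graph $G$, whose outer boundary is a Hamiltonian cycle $C$ and whose chords cut the interior into inner faces, so that the weak dual $T$ (one node per inner face, two nodes adjacent when the faces share a chord) is a tree.

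The engine of the proof is the interplay between the three forbidden local patterns and this tree. From the absence of \ref{c2} I would show that every leaf face of $T$ is a triangle: a leaf face meets the rest of $G$ along a single chord, so its remaining boundary is a path of $C$ all of whose interior vertices have degree $2$, and two such interior vertices would be adjacent degree-$2$ vertices, i.e.\ \ref{c2}. Hence each leaf face is a triangle whose apex has degree $2$, and the absence of \ref{c3} forces both partners of that apex to have degree $\ge 4$. Since a tree with at least two nodes has at least two leaves --- and the remaining case of a single inner face means $G$ is a cycle, which already contains \ref{c2} --- the graph $G$ has at least two degree-$2$ vertices each lying in a triangle with two partners of degree $\ge 4$. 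I would then quantify this by discharging: give each vertex the charge $4-\deg_G(v)$, so that the outerplanar bound $\abs{E(G)}\le 2\abs{V(G)}-3$ makes the total charge $\sum_v(4-\deg_G(v))=4\abs{V(G)}-2\abs{E(G)}\ge 6>0$, with positive charges only on the degree-$2$ and degree-$3$ vertices. Redistributing the surplus of each degree-$2$ vertex onto its two heavier neighbours and bounding how much any vertex can accumulate, the target is to show that under the no-configuration hypothesis every vertex finishes with nonpositive charge, contradicting the strictly positive total.

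The crux is the bookkeeping at the receiving vertices, and this is exactly where \ref{c4} enters. A degree-$4$ vertex carries charge $0$ and so has no slack: it may absorb nothing. The condition forbidding \ref{c4} says precisely that a degree-$4$ vertex cannot be the common vertex of two triangles each containing a degree-$2$ vertex, hence it is a triangle-partner of at most one degree-$2$ vertex; the rules must therefore send strictly less than a full unit to degree-$4$ partners and route the remainder to genuinely heavy vertices of degree $\ge 5$, which do have slack. I expect this calibration to be the main obstacle: one must fix the exact amounts moved so that \ref{c2}, \ref{c3}, and \ref{c4} together annihilate all positive charge, while separately handling the degree-$2$ vertices lying on inner faces of length $\ge 4$ (which are not leaf apexes) and checking that the end-block reduction never misattributes a configuration to the cut-vertex. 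Once every final charge is shown to be at most $0$, the contradiction with the positive total finishes the proof.
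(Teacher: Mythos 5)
First, note that the paper does not prove this lemma at all: it is quoted verbatim from \cite{WZ99} as a known structural result, so there is no in-paper proof to compare your argument against; I can only judge the proposal on its own terms. Your structural setup is sound and is indeed the standard skeleton for such lemmas: reducing to an end-block, handling $K_2$ and cycles, passing to the weak dual tree $T$ of a $2$-connected outerplanar graph, and observing that (C2)-freeness forces every leaf face of $T$ to be a triangle whose apex has degree $2$ and whose two partners, by (C2)- and (C3)-freeness, have degree at least $4$. Up to the (real but manageable) caveat about the cut-vertex's degree changing in the end-block, all of that is correct.

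The genuine gap is the discharging. With initial charge $4-\deg_G(v)$ and total at least $6$, you must show every vertex ends with nonpositive charge, and the configuration list simply does not support this. A degree-$3$ vertex starts at $+1$, and nothing in (C1)--(C4) constrains a degree-$3$ vertex that lies in no triangle with a degree-$2$ vertex; it has no surplus to send and no guaranteed neighbour of degree at least $5$ to send it to, so it finishes at $+1>0$ and the contradiction evaporates. The problem recurs at degree $4$: such a vertex has zero slack, yet (C4)-freeness only prevents it from being the common vertex of \emph{two} triangles with degree-$2$ apexes, so it may legitimately absorb a full unit from one apex whose other partner also has degree $4$, leaving $+1$ with nowhere further to push it; and a degree-$2$ vertex on an inner face of length at least $4$ may have two degree-$3$ neighbours, which (C3) does not touch because there is no triangle. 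This is not a calibration problem that sharper rules will fix --- the charges of the positive vertices cannot all be absorbed locally under only the hypotheses (C1)--(C4). The usual proof abandons global counting and instead argues extremally on the weak dual: take a longest path in $T$, let $f_0$ be its leaf end and $f_1$ the neighbouring face, note that all other neighbours of $f_1$ in $T$ are also leaves (hence triangles with degree-$2$ apexes), and a short case analysis on the length of $f_1$ and the number of its leaf neighbours produces (C2), (C3), or --- when two leaf triangles hang off the same degree-$4$ vertex of $f_1$ --- exactly (C4). I would redirect your effort to that tree analysis rather than trying to rescue the charge bookkeeping.
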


In the following, we show a series of self-contained lemmas that not only support the proof of Theorem \ref{thm:outerplanar} but also can be applied in other places.

A family $\mathcal{G}$ of graphs is \textit{hereditary} (resp.\,\textit{minor-closed}) if every subgraph (resp.\,minor) $H$ of each graph $G\in \mathcal{G}$ belongs to $\mathcal{G}$. 
A graph $G$ is \textit{$\rho$-$\Delta$-critical} (resp\,\textit{$\varrho$-$\Delta$-critical}) if $\Delta(G)\leq \Delta$ and
$G$ has no conditional incidence $\Delta$-coloring but every subgraph  (resp.\,minor) $H$ of $G$ does.
Since every subgraph is a minor, we have the following.

\begin{lem}\label{lem:relationship}
A $\varrho$-$\Delta$-critical graph is definitely $\rho$-$\Delta$-critical. \hfill$\square$
\end{lem}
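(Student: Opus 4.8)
The plan is to verify, directly from the hypothesis, the three defining conditions of a $\rho$-$\Delta$-critical graph, exploiting the fact that the subgraph relation is contained in the minor relation. Recall that $G$ being $\varrho$-$\Delta$-critical means precisely three things: $\Delta(G)\leq \Delta$; the graph $G$ itself admits no conditional incidence $\Delta$-coloring (in the sense of Definition \ref{def:4}); and every proper \emph{minor} of $G$ does admit one. The conclusion to be reached, that $G$ is $\rho$-$\Delta$-critical, asks for exactly the same first two conditions, together with the a priori weaker requirement that every proper \emph{subgraph} of $G$ be conditionally incidence $\Delta$-colorable.

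First I would dispatch the two shared conditions, which transfer verbatim: the degree bound $\Delta(G)\leq \Delta$ is common to both notions, and the non-colorability of $G$ itself is as well, so no work is required for these. The only substantive step is the subgraph condition, and for this I would argue that every proper subgraph $H$ of $G$ is in particular a proper minor of $G$: a subgraph arises by deleting edges and vertices, which is a special case of the minor operations listed before the lemma (edge deletion and contraction), with no contraction ever needed. Since $G$ is $\varrho$-$\Delta$-critical, every proper minor of $G$ carries a conditional incidence $\Delta$-coloring; applying this to $H$ supplies the desired coloring. As $H$ was an arbitrary proper subgraph, all three conditions defining $\rho$-$\Delta$-criticality hold, which finishes the argument.

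There is essentially no obstacle here—the content is entirely the inclusion of the subgraph order into the minor order, already signalled by the sentence ``Since every subgraph is a minor'' preceding the statement. The single point that warrants a clause of care is reconciling this with the paper's minor operations, which are phrased using only edge deletion and contraction: deleting the edges at an unwanted vertex leaves it isolated, and an isolated vertex has empty strong and weak incidence sets and falls outside the scope of condition \ref{iv}, so it is irrelevant to any conditional incidence coloring. Hence restricting a coloring of such a near-subgraph to $H$ loses nothing, and the inheritance of colorability from minors to subgraphs goes through without complication.
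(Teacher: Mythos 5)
Your proof is correct and takes essentially the same approach as the paper, which justifies the lemma with the single observation that every subgraph is a minor and marks it with $\square$. Your extra care about isolated vertices (reconciling the subgraph relation with the paper's edge-deletion/contraction definition of minor) is a reasonable tidying of a point the paper glosses over, but it does not change the argument.
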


\begin{lem}\label{lem:connected}
If $G$ is a $\rho$-$\Delta$-critical graph, then $G$ is connected.
\end{lem}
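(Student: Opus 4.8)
The plan is to argue by contradiction, exploiting the disjoint structure of a disconnected graph. Suppose $G$ is $\rho$-$\Delta$-critical but disconnected, and write $G$ as the disjoint union of its connected components $G_1,G_2,\ldots,G_k$ with $k\geq 2$. Since $k\geq 2$, each $G_i$ is a proper subgraph of $G$, so by the definition of $\rho$-$\Delta$-criticality each $G_i$ admits a conditional incidence $\Delta$-coloring $\varphi_i$; note that $\Delta(G_i)\leq \Delta(G)\leq \Delta$, so these colorings indeed take values in $[\Delta]$.

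Next I would glue these colorings together. Because the components are pairwise vertex-disjoint, the incidence set $I(G)$ is the disjoint union $\bigcup_{i=1}^{k} I(G_i)$, so the map $\varphi\colon I(G)\to[\Delta]$ defined by $\varphi(x)=\varphi_i(x)$ whenever $x\in I(G_i)$ is well defined. It then remains to check that $\varphi$ satisfies the four requirements \ref{i}--\ref{iv} of Definition \ref{def:4}.

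The verification is immediate once one observes that every requirement in Definition \ref{def:4} is local: for a fixed vertex $u$, the sets $I_u$ and $A_u$, the edges incident with $u$, and the degree $\deg_G(u)$ are all determined entirely by the component $G_i$ containing $u$; in particular $\deg_G(u)=\deg_{G_i}(u)$ and $A_u\cup I_u$ lies wholly in $G_i$. Hence condition \ref{i} for an edge $uv$, conditions \ref{ii} and \ref{iii} at $u$, and condition \ref{iv} for a vertex $u$ with $\deg_G(u)\geq \Delta-1$ each reduce to the corresponding statement for $\varphi_i$ on $G_i$, which holds because $\varphi_i$ is a conditional incidence $\Delta$-coloring. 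Thus $\varphi$ is a conditional incidence $\Delta$-coloring of $G$, contradicting the assumption that $G$ has none; therefore $G$ is connected.

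Since this is a standard criticality argument, I do not anticipate a genuine obstacle. The only point requiring a little care is condition \ref{iv}, which singles out vertices of degree at least $\Delta-1$; but this causes no trouble, because passing to a component preserves vertex degrees, so a vertex is high-degree in $G$ precisely when it is high-degree in its component, and the ``at least once'' requirement transfers unchanged.
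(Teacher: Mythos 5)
Your proof is correct and follows essentially the same route as the paper's: split the disconnected graph into components, invoke criticality to color each proper subgraph, and glue, noting that all four conditions of Definition \ref{def:4} are local to a component. The paper states this in two sentences; your version merely spells out the locality check (including the degree-preservation point needed for condition \ref{iv}) in more detail.
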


\begin{proof}
If $G$ has two components $G_1$ and $G_2$, then $G_1$ and $G_2$ are conditionally incidence $\Delta$-colorable. Combining the 
conditional incidence $\Delta$-colorings of them, we obtain a conditional incidence $\Delta$-coloring of $G$, contradicting the fact that $G$ is 
$\rho$-$\Delta$-critical.
\end{proof}

\begin{lem}\label{lem:minimumdegree2}
If $G$ is a $\rho$-$\Delta$-critical graph, then $\delta(G)\geq 2$.
\end{lem}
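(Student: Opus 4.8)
The plan is to argue by contradiction and rule out vertices of degree $0$ and of degree $1$ separately. The degree-$0$ case is immediate: by Lemma \ref{lem:connected} a $\rho$-$\Delta$-critical graph $G$ is connected, so a vertex of degree $0$ would force $G=K_1$; but $K_1$ has a (vacuous) conditional incidence $\Delta$-coloring and hence is not $\rho$-$\Delta$-critical. So I may assume $\delta(G)\ge 1$, and it remains to exclude a vertex $u$ with $\deg_G(u)=1$.

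Suppose such a $u$ exists, with unique neighbour $v$, and set $G'=G-u$, a proper subgraph of $G$. By criticality $G'$ admits a conditional incidence $\Delta$-coloring $\varphi'$. The only incidences present in $G$ but not in $G'$ are $(u,uv)$ and $(v,uv)$, and these lie only in the sets $I_u,A_u,I_v,A_v$; hence every requirement of Definition \ref{def:4} at a vertex other than $u,v$ is inherited from $\varphi'$. At $u$, conditions \ref{i}--\ref{iii} are trivial (it has a single strong and a single weak incidence) and \ref{iv} is vacuous since $\deg_G(u)=1<\Delta-1$. Thus the whole task reduces to choosing two colors, $c_1:=\varphi(u,uv)$ (a weak incidence of $v$) and $c_2:=\varphi(v,uv)$ (a strong incidence of $v$), so that, writing $W$ and $S$ for the sets of colors that $\varphi'$ assigns to the weak and to the strong incidences of $v$ in $G'$, the enlarged color sets $\varphi(A_v)=W\cup\{c_1\}$ and $\varphi(I_v)=S\cup\{c_2\}$ still satisfy \ref{i}--\ref{iv} at $v$.

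I would then split on $\deg_G(v)$. If $\deg_G(v)\le\Delta-2$ then \ref{iv} is inactive at $v$, and since $|W|=|S|=\deg_G(v)-1\le\Delta-3$, one greedily picks $c_1\notin W$ and then $c_2\notin S\cup\{c_1\}$, settling \ref{i}--\ref{iii}. The delicate case---and the main obstacle---is $\deg_G(v)\in\{\Delta-1,\Delta\}$, where \ref{iv} forces $\varphi(I_v)\cup\varphi(A_v)=[\Delta]$. When $\deg_G(v)=\Delta$ we have $|W|=|S|=\Delta-1$, so $c_1$ is forced to be the unique color missing from $W$ and $c_2$ the unique color missing from $S$; these are automatically distinct because $\varphi'$ already obeys \ref{iv} at $v$ in $G'$ (of degree $\Delta-1$), i.e. $W\cup S=[\Delta]$. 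When $\deg_G(v)=\Delta-1$ the sets $[\Delta]\setminus W$ and $[\Delta]\setminus S$ each have size $2$, and I must choose $c_1\in[\Delta]\setminus W$ and $c_2\in[\Delta]\setminus S$ with $c_1\neq c_2$ so that the single colors $([\Delta]\setminus W)\setminus\{c_1\}$ and $([\Delta]\setminus S)\setminus\{c_2\}$ remaining uncovered afterwards are distinct (which is exactly what makes the union $[\Delta]$); a short check of whether $[\Delta]\setminus W$ and $[\Delta]\setminus S$ are disjoint, meet in one color, or coincide shows a valid choice always exists. In every case $\varphi'$ extends to a conditional incidence $\Delta$-coloring of $G$, contradicting its criticality; therefore $G$ has no vertex of degree $1$, and combined with the degree-$0$ argument we obtain $\delta(G)\ge 2$.
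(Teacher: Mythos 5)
Your proof is correct and follows essentially the same route as the paper's: delete the degree-one vertex, take a conditional incidence $\Delta$-coloring of the subgraph, and extend it by choosing the two new colors at the neighbour so that \ref{i}--\ref{iii} hold and condition \ref{iv} is restored when the neighbour has degree $\Delta-1$ or $\Delta$. The only difference is cosmetic: you organize the cases by $\deg_G(v)$, whereas the paper splits on whether $\varphi'(I_v\cup A_v)$ already equals $[\Delta]$; both reduce to the same elementary choices, and your subcase check for $\deg_G(v)=\Delta-1$ indeed goes through.
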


\begin{proof}
Suppose for a contradiction that $G$ has a vertex $v$ of degree at most 1. By Lemma \ref{lem:connected}, we may assume $N_G(v)=\{u\}$.
Since $G$ is $\rho$-$\Delta$-critical, $G'=G-v$ admits a conditional incidence $\Delta$-coloring $\varphi'$.

We extend $\varphi'$ to a conditional incidence $\Delta$-coloring $\varphi$ of $G$ as follows.


If $\varphi'(I_u\cup A_u)=[\Delta]$, then $\varphi'(I_u)\setminus \varphi'(A_u)\neq \emptyset$ and $\varphi'(A_u)\setminus \varphi'(I_u)\neq \emptyset$, since
$\deg_{G'}(u)\leq \Delta(G)-1\leq \Delta-1$.
Hence we can color $(u,uv)$ with $a\in \varphi'(A_u)\setminus \varphi'(I_u)$ and $(v,uv)$ with $b\in \varphi'(I_u)\setminus \varphi'(A_u)$ to complete $\varphi$.

If $|\varphi'(I_u\cup A_u)|<\Delta$, then $\deg_{G'}(u)\leq \Delta-2$.
If $\deg_{G'}(u)< \Delta-2$, then color $(u,uv)$ with $a\in [\Delta]\setminus \varphi'(I_u)$.
and $(v,uv)$ with $b\in [\Delta]\setminus (\varphi'(A_u)\cup \{a\})$.
If $\deg_{G'}(u)= \Delta-2$, then $[\Delta]\setminus \varphi'(I_u)=\{a,b\}$ has one element, say $b$, that does not appear in $\varphi'(A_u)$.
Hence we can color $(u,uv)$ with $a$ and $(v,uv)$ with $b$ to complete $\varphi$. Note that this extension guarantees $\varphi(I_u\cup A_u)=[\Delta]$.
\end{proof}

\begin{rem} If $G$ is an outerplanar graph with $\Delta(G)\leq \Delta$ such that $G$ has no 1-defective incidence $\Delta$-coloring but every subgraph $H$ of $G$ does, then the idea of proving Lemma \ref{lem:minimumdegree2} cannot be applied to prove $\delta(G)\geq 2$. 
Actually, if $G$ has an edge $uv$ with $\deg_G(v)=1$ and $\deg_G(u)=\Delta$, then $G-v$ has a 1-defective incidence $\Delta$-coloring $\varphi'$ by the minimality of $G$. However, it may happen that $\varphi'(I_u)=\varphi'(A_u)=[\Delta-1]$ and thus we have to color both $(u,uv)$ and $(v,uv)$ with $\Delta$ while extending $\varphi'$ to $G$ and then return a failure. This is indeed the reason why we introduce the notion of conditional incidence coloring and then prove Theorem \ref{thm:outerplanar} instead of proving Theorem \ref{thm:outerplanar2} directly.
\end{rem}

\begin{lem}\label{lem:2-no-minus2}
If $G$ is a $\rho$-$\Delta$-critical graph, then $G$ does not contain an edge $uv$ with $\deg_G(u)=2$ and $\deg_G(v)\leq \Delta-2$.
\end{lem}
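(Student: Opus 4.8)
The plan is to argue by contradiction with the standard reducibility scheme: assume $G$ contains such an edge $uv$, delete the degree-$2$ vertex $u$, use criticality to colour the smaller graph, and then show the colouring always extends, contradicting that $G$ admits no conditional incidence $\Delta$-colouring. Let $w$ be the second neighbour of $u$, so $N_G(u)=\{v,w\}$ with $v\neq w$ ($G$ is simple). By $\rho$-$\Delta$-criticality the proper subgraph $H:=G-u$ has a conditional incidence $\Delta$-colouring $\varphi$, and $\Delta(H)\le\Delta$. Removing $u$ lowers only the degrees of $v$ and $w$ by one (this holds whether or not $v$ and $w$ are adjacent), so $\deg_H(v)=\deg_G(v)-1\le\Delta-3$ and $\deg_H(w)=\deg_G(w)-1$. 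Restoring $u$ with its edges changes the incidence sets only at $u$, $v$, $w$; everywhere else properties \ref{i}--\ref{iv} of Definition~\ref{def:4} survive untouched. It therefore suffices to assign colours $c_1=\varphi(u,uv)$, $c_2=\varphi(v,uv)$, $c_3=\varphi(u,uw)$, $c_4=\varphi(w,uw)$ so that the local requirements hold: $c_1\neq c_3$ and $c_2\neq c_4$ (the two strong, resp.\ two weak, incidences of $u$); $c_1\neq c_2$ and $c_3\neq c_4$ (condition \ref{i} on $uv,uw$); $c_2\notin\varphi(I_v)$ and $c_1\notin\varphi(A_v)$; $c_4\notin\varphi(I_w)$ and $c_3\notin\varphi(A_w)$; and the coverage condition \ref{iv} at $w$ if $\deg_G(w)\ge\Delta-1$. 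Crucially, \ref{iv} is vacuous at $u$ (degree $2<\Delta-1$, as $\Delta\ge4$) and at $v$ (degree $\le\Delta-2$); this is exactly where the hypothesis $\deg_G(v)\le\Delta-2$ is used.

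I would colour the tight $w$-side first, treating $uw$ as a pendant edge attached at $w$, which is precisely the extension problem already solved in the proof of Lemma~\ref{lem:minimumdegree2}. Concretely, split on $\deg_G(w)$. If $\deg_G(w)\le\Delta-2$, then \ref{iv} is vacuous at $w$ as well, and since $\varphi(I_w),\varphi(A_w)$ each have at most $\Delta-3$ colours there is ample room to pick $c_4\notin\varphi(I_w)$ and $c_3\notin\varphi(A_w)\cup\{c_4\}$. If $\deg_G(w)=\Delta$, then $\varphi(I_w\cup A_w)=[\Delta]$ already holds in $H$ by \ref{iv}, so $\varphi(I_w)$ and $\varphi(A_w)$ each miss exactly one colour; $c_4$ and $c_3$ are forced to be those missing colours, the coverage at $w$ is inherited, and $c_3\neq c_4$ follows because the colour absent from $I_w$ lies in $A_w$.

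The delicate subcase is $\deg_G(w)=\Delta-1$, where \ref{iv} now applies in $G$ although it did not in $H$: here $\varphi(I_w)$ and $\varphi(A_w)$ each miss two colours, and I must cover all $\Delta$ colours on $I_w\cup A_w$ using only the two free choices $c_3,c_4$. I would choose $c_4$ to be one missing colour of $\varphi(I_w)$; the single colour then still uncovered by $\varphi(I_w)\cup\{c_4\}$ must be supplied by $\varphi(A_w)\cup\{c_3\}$, and a short check on whether that colour already lies in $\varphi(A_w)$ or must be taken as $c_3$ shows the coverage can always be met while keeping $c_3\neq c_4$. This is the one step requiring genuine (if brief) case analysis, and it is the main obstacle of the proof; everything else reduces to slack-counting.

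Finally, with $c_3,c_4$ fixed, I colour the loose $v$-side. The strong incidence colour $c_2$ must avoid $\varphi(I_v)$ (at most $\Delta-3$ colours) and $c_4$, while the weak incidence colour $c_1$ must avoid $\varphi(A_v)$ (at most $\Delta-3$ colours) and $c_3$; thus each of $c_1,c_2$ has at most $\Delta-2$ forbidden colours before the mutual constraint $c_1\neq c_2$. Choosing $c_2$ first from its at least two admissible colours and then $c_1\neq c_2$ from its at least two admissible colours always succeeds. The resulting $\varphi$ is a conditional incidence $\Delta$-colouring of $G$, contradicting criticality, which proves the lemma.
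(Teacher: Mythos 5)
Your argument is correct, but it takes a genuinely different reduction from the paper's. The paper deletes only the \emph{edge} $uv$: in $G'=G-uv$ the vertex $w$ keeps its full degree, so the incidences $(u,uw)$ and $(w,uw)$ are already coloured and every condition at $w$ (including \ref{iv}) is inherited for free; only the two incidences of $uv$ need colours, and the slack $\deg_{G'}(v)\le\Delta-3$ makes that a two-line greedy choice. You instead delete the \emph{vertex} $u$, which drops the degree of $w$ as well, so you must recolour four incidences and, in particular, re-establish \ref{iv} at $w$ when $\deg_G(w)\ge\Delta-1$ --- this is exactly your ``delicate subcase,'' and it is essentially a rerun of the pendant-edge extension from Lemma \ref{lem:minimumdegree2}. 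Your case analysis there is sound (when $\varphi(I_w)$ and $\varphi(A_w)$ each miss two colours, any common missing colour can be absorbed into $\{c_3,c_4\}$ while keeping $c_3\ne c_4$, and when $\deg_G(w)=\Delta$ the forced choices are distinct because the colour missing from $I_w$ lies in $A_w$), and the subsequent colouring of the $v$-side goes through by the same slack count the paper uses. So both proofs work; the paper's edge-deletion buys a much shorter argument by never disturbing $w$, while yours is self-contained but pays for the vertex deletion with the extra $w$-side bookkeeping. One small point to make explicit: you assert $\Delta\ge4$ without justification --- it follows because Lemma \ref{lem:minimumdegree2} gives $\deg_G(v)\ge2$, which combined with $\deg_G(v)\le\Delta-2$ forces $\Delta\ge4$; the paper states this step.
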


\begin{proof}
 Suppose for a contradiction that $G$ has such an edge $uv$.
By Lemma \ref{lem:minimumdegree2}, we have $2\leq \deg_G(v)\leq \Delta-2$ and thus $\Delta\geq 4$. 
Let $w$ be the other neighbor of $u$ besides $v$. Since $G$ is a $\rho$-$\Delta$-critical, $G'=G-uv$ admits a conditional incidence $\Delta$-coloring $\varphi'$.

Assume $\varphi(w,uw)=a$ and $\varphi(u,uw)=b$.
Since $\deg_{G'}(v)\leq (\Delta-2)-1 \leq \Delta-3$, 
we are able to color $(v,uv)$ with a color $a'\in [\Delta]\setminus (\varphi'(I_v)\cup \{a\})$ and
$(u,uv)$ with a color from $[\Delta]\setminus (\varphi'(A_v) \cup \{a',b\})$.
This extends $\varphi'$ to a conditional incidence $\Delta$-coloring of $G$.
\end{proof}

\begin{lem}\label{lem:2-no-twominus1}
If $G$ is a $\rho$-$\Delta$-critical graph and $\Delta\geq 4$, then $G$ does not contain a vertex with $\deg_G(u)=2$ such that $N_G(u)=\{v,w\}$ and $\deg_G(v)=\deg_G(w)=\Delta-1$.
\end{lem}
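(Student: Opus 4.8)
The plan is to argue by contradiction using the standard reducibility technique for critical graphs. Suppose $G$ contains a vertex $u$ with $\deg_G(u)=2$, $N_G(u)=\{v,w\}$, and $\deg_G(v)=\deg_G(w)=\Delta-1$. I would delete $u$ and invoke criticality: since $G'=G-u$ is a proper subgraph, it admits a conditional incidence $\Delta$-coloring $\varphi'$. In $G'$ we have $\deg_{G'}(v)=\deg_{G'}(w)=\Delta-2$, so by \ref{ii} and \ref{iii} the color sets $\varphi'(I_v)$ and $\varphi'(A_v)$ each consist of $\Delta-2$ distinct colors; writing $\bar I_v=[\Delta]\setminus\varphi'(I_v)$ and $\bar A_v=[\Delta]\setminus\varphi'(A_v)$, and likewise $\bar I_w,\bar A_w$, each of these four sets has exactly two colors. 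The goal is then to color the four new incidences so as to extend $\varphi'$, contradicting the non-colorability of $G$.

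Set $p=\varphi(u,uv)$, $q=\varphi(v,uv)$, $r=\varphi(u,uw)$, $s=\varphi(w,uw)$. Extending $\varphi'$ to a conditional incidence $\Delta$-coloring of $G$ amounts to choosing $p,q,r,s$ so that $q\in\bar I_v$ and $p\in\bar A_v$ (these are \ref{iii} and \ref{ii} at $v$), $s\in\bar I_w$ and $r\in\bar A_w$ (at $w$); $p\neq q$ and $r\neq s$ (\ref{i} on $uv$ and $uw$); $p\neq r$ and $q\neq s$ (\ref{iii} and \ref{ii} at $u$); and the covering conditions $\bar I_v\cap\bar A_v\subseteq\{p,q\}$ and $\bar I_w\cap\bar A_w\subseteq\{r,s\}$ forced by \ref{iv} at $v$ and $w$, whose degrees rise to $\Delta-1$ in $G$. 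Note that \ref{iv} imposes nothing at $u$, since $\deg_G(u)=2<\Delta-1$ because $\Delta\geq 4$, and that all conditions at vertices other than $u,v,w$ are untouched by the extension.

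I would first analyze $v$ in isolation: depending on whether $|\bar I_v\cap\bar A_v|$ equals $0$, $1$, or $2$, the ordered pairs $(p,q)$ satisfying $p\in\bar A_v$, $q\in\bar I_v$, $p\neq q$, and $\bar I_v\cap\bar A_v\subseteq\{p,q\}$ form an explicit nonempty set of admissible pairs; in every case there are at least two of them, and their $p$-coordinates and their $q$-coordinates each range over a set of size at least two. The same holds for $(r,s)$ at $w$. The remaining task is to select an admissible $(p,q)$ for $v$ and an admissible $(r,s)$ for $w$ that additionally satisfy the coupling $p\neq r$ and $q\neq s$; any such choice produces a conditional incidence $\Delta$-coloring of $G$, contradicting that $G$ is $\rho$-$\Delta$-critical.

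The main obstacle is precisely this coupling step, which I would settle by a short case analysis on the two intersection sizes $|\bar I_v\cap\bar A_v|$ and $|\bar I_w\cap\bar A_w|$, reduced by the symmetry $v\leftrightarrow w$. The tightest case is when both intersections have size two, so that $v$ and $w$ each offer only their two ``swapped'' pairs: here one checks that if the two two-element palettes are disjoint then any combination works, while if they overlap one can always select the pairs so that a shared color occupies different coordinates, avoiding both $p=r$ and $q=s$. A brief elimination argument then shows that in each of the remaining cases at least one of the (at most four) combinations of admissible pairs survives the coupling constraints, completing the contradiction. One minor point to keep in mind is that $v$ and $w$ may be adjacent, forming a triangle with $u$; this does not affect the argument, since the edge $vw$ and its incidences already lie in $G'$ and are colored by $\varphi'$, so no further constraints arise.
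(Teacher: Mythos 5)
Your proof is correct but takes a genuinely different route from the paper's. The paper deletes only the edge $uw$, so the two incidences of $uv$ keep their colors $a,b$ from the coloring of $G-uw$, and the whole difficulty is reconciling those two fixed colors with the two missing colors of $I_w$ and of $A_w$; this produces a hard subcase in which no direct extension exists and the authors must erase and recolor the $uv$-incidences, comparing two candidate colorings $\varphi_1,\varphi_2$ of $G-uv$ and showing that one of them extends. You instead delete the vertex $u$, which makes $v$ and $w$ symmetric, drops both of their degrees to $\Delta-2$ so that condition \ref{iv} imposes nothing on them in $G-u$, and leaves all four new incidences free; this is cleaner and avoids the recoloring trick entirely. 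Your constraint list is complete, and your characterization of the admissible pairs at $v$ is right: four pairs when $\bar I_v\cap\bar A_v=\emptyset$, the two pairs $(x,q_2)$ and $(p_2,x)$ sharing the element $x$ in opposite coordinates when the intersection is $\{x\}$, and the two reversed pairs $(x,y),(y,x)$ when $\bar I_v=\bar A_v=\{x,y\}$; the deferred coupling analysis does close in every combination of intersection sizes. One caution: the cardinality facts you cite (at least two admissible pairs, with $p$-coordinates and $q$-coordinates each ranging over a two-element set) are not by themselves sufficient --- the abstract pair sets $\{(1,3),(2,4)\}$ and $\{(1,4),(2,3)\}$ satisfy them yet admit no compatible choice --- so the verification genuinely needs the finer structure just listed (a shared element across coordinates, or reversed pairs), which is exactly what excludes that bad configuration. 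With the case analysis on the two intersection sizes written out, your argument is a complete and arguably simpler proof of the lemma.
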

\begin{proof}
Suppose for a contradiction that $G$ contains such a vertex. 
Since $G$ is $\rho$-$\Delta$-critical, $G'=G-uw$ has a conditional incidence $\Delta$-coloring $\varphi'$.
Assume $\varphi'(v,uv)=a$, $\varphi'(u,uv)=b$, $[\Delta]\setminus\varphi'(I_w)=\{a',c'\}$, and $[\Delta]\setminus\varphi'(A_w)=\{b',d'\}$.
We extend $\varphi'$ to an incidence $\Delta$-coloring $\varphi$ as follows.

Case 1. $\{a',c'\}\cap \{b',d'\}=\emptyset$

Since $\deg_{G'}(w)=\Delta-2$, $\varphi'(I_w\cup A_w)=[\Delta]$ under this case. 
So we color $(w,uw)$ with a color in $\{a',c'\}\setminus \{a\}$ and $(u,uw)$ with a color in $\{b',d'\}\setminus \{b\}$ to complete $\varphi$.
It is easy to see that $\varphi$ is a conditional incidence $\Delta$-coloring of $G$. 

Case 2. $a\in \{a',c'\}\cap \{b',d'\}$.

Assume, without loss of generality, that  $a'=b'=a$.
Coloring $(u,uw)$ with $a$ and $(w,uw)$ with $c'$, we obtain a conditional incidence $\Delta$-coloring $\varphi$ of $G$  as
$\varphi(I_w\cup A_w)\supseteq \varphi'(I_w) \cup \{a,c'\}=\varphi'(I_w) \cup \{a',c'\}=[\Delta]$,
$\varphi(u,uw)=a\neq b=\varphi(u,uv)$, and $\varphi(w,uw)=c'\neq a'=a=\varphi(v,uv)$.

Case 3. $a\not\in \{a',c'\}\cap \{b',d'\}\neq \emptyset$.

Assume, without loss of generality, that $a'=b'\neq a$.

Subcase 3.1. $b\neq d'$.

We color $(w,uw)$ with $a'$ and $(u,uw)$ with $d'$. 
This extended coloring $\varphi$ of $G$  satisfies $\varphi(I_w\cup A_w)\supseteq \varphi'(A_w) \cup \{a',d'\}=\varphi'(A_w) \cup \{b',d'\}=[\Delta]$, $\varphi(w,uw)=a'\neq a=\varphi(v,uv)$, and $\varphi(u,uw)=d'\neq b=\varphi(u,uv)$, and thus is a conditional incidence $\Delta$-coloring of $G$.

Subcase 3.2. $a\neq c'$.

We color $(w,uw)$ with $c'$ and $(u,uw)$ with $b'$. 
This extended coloring $\varphi$ of $G$ satisfies $\varphi(I_w\cup A_w)\supseteq \varphi'(A_w) \cup \{b',c'\}=\varphi'(A_w) \cup \{a',c'\}=[\Delta]$, $\varphi(w,uw)=c'\neq a=\varphi(v,uv)$, and $\varphi(u,uw)=b'\neq b=\varphi(u,uv)$. It follows that $\varphi$ is a conditional incidence $\Delta$-coloring of $G$. 

Subcase 3.3. $b=d'$ and $a=c'$.

This is equivalent to say $[\Delta]\setminus\varphi'(I_w)=\{a,a'\}$ and $[\Delta]\setminus\varphi'(A_w)=\{b,a'\}$.

Erase the colors of $(u,uv)$ and $(v,uv)$. Now we have two ways to transfer $\varphi'$ to a conditional incidence $\Delta$-coloring of $G''=G-uv$.
The first way is to color $(w,uw)$ with $a'$ and $(u,uw)$ with $b$, while the second way is to color $(w,uw)$ with $a$ and $(u,uw)$ with $a'$.
We denote by $\varphi_1$ and $\varphi_2$ be those two colorings respectively.

Assume $[\Delta]\setminus\varphi_1(I_v)=\{a,c''\}$ and $[\Delta]\setminus\varphi_1(A_v)=\{b,d''\}$.
Using the same proof strategies in Cases 1 and 2 and Subcases 3.1 and 3.2, one can find that the only obstacle while extending $\varphi_1$ to a conditional incidence $\Delta$-coloring of $G$ is the case that 
$[\Delta]\setminus\varphi_1(I_v)=\{a',e\}$ and $[\Delta]\setminus\varphi_1(A_v)=\{b,e\}$ for some $e\in [\Delta]$.
In particular, since $a\neq a'$, we deduce $e=d''=a$ and $a'=c''$. 

Therefore, 
$[\Delta]\setminus\varphi_2(I_v)=[\Delta]\setminus\varphi_1(I_v)=\{a',a\}$ and 
$[\Delta]\setminus\varphi_2(A_v)=[\Delta]\setminus\varphi_1(A_v)=\{a,b\}$
if $\varphi_1$ is not extendable. However, we earn chance to extend $\varphi_2$ now.
This can be done by coloring $(u,uv)$ with $a$ and $(v,uv)$ with $a'$.
Since the resulting coloring $\varphi$ satisfies $\varphi(I_v\cup A_v)\supseteq \varphi_2(I_v)\cup \{a,a'\}=[\Delta]$, $\varphi(u,uv)=a\neq a'=\varphi_2(u,uw)$, and $\varphi(v,uv)=a'\neq a=\varphi_2(w,uw)$, $\varphi$ is
a  conditional incidence $\Delta$-coloring of $G$.
\end{proof}

\begin{lem}\label{lem:no234triangle}
If $G$ is a $\rho$-$\Delta$-critical graph and $\Delta\geq 4$, 
then $G$ does not contain a triangle $uvw$ with $\deg_G(u)=2$, $\deg_G(v)=3$ and $\deg_G(w)=4$.
\end{lem}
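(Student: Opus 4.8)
The plan is to argue by contradiction in the style of the preceding lemmas. Suppose $G$ contains such a triangle $uvw$ with $\deg_G(u)=2$, $\deg_G(v)=3$, and $\deg_G(w)=4$, and write $N_G(v)=\{u,w,x\}$ and $N_G(w)=\{u,v,y,z\}$. Since $u$ has degree $2$, I would delete it and invoke $\rho$-$\Delta$-criticality to obtain a conditional incidence $\Delta$-coloring $\varphi'$ of $G'=G-u$; it then remains to color the four incidences $(v,uv)$, $(u,uv)$, $(w,uw)$, $(u,uw)$ so that \ref{i}--\ref{iv} hold. The governing observation is that condition \ref{iv} is active at $w$ exactly when $\deg_G(w)=4\geq\Delta-1$, i.e.\ $\Delta\in\{4,5\}$, and at $v$ exactly when $\deg_G(v)=3\geq\Delta-1$, i.e.\ $\Delta=4$; at every other vertex the incidence sets are unchanged by the extension, so their requirements are inherited from $\varphi'$.

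If $\Delta\geq 6$, then neither $v$ nor $w$ triggers \ref{iv}, and a greedy assignment in the order $(u,uv),(u,uw),(v,uv),(w,uw)$ succeeds: at each step the number of forbidden colors (those already present on the relevant $I$- or $A$-set, plus the at most two incidences sharing $I_u$, $A_u$, or an edge) is at most five, so a free color remains. If $\Delta=5$, only $w$ triggers \ref{iv}. Here I would first color $(w,uw)$ and $(u,uw)$ so as to cover the at most two colors of $[5]$ missing from $\varphi'(I_w)\cup\varphi'(A_w)$, using the two free values of $[5]\setminus\varphi'(I_w)$ and $[5]\setminus\varphi'(A_w)$, and then color $(v,uv)$ and $(u,uv)$, which each have three candidate colors and only two forbidden values, hence remain colorable.

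The crux is $\Delta=4$, where \ref{iv} acts at both $v$ and $w$. Since $\deg_{G'}(w)=3=\Delta-1$, condition \ref{iv} already holds at $w$ under $\varphi'$, so $\varphi'(I_w)$ and $\varphi'(A_w)$ omit exactly one color each, say $p$ and $q$ with $p\neq q$; to maintain \ref{ii}--\ref{iv} at $w$ after reinstating $u$, the colors are forced to be $\varphi(w,uw)=p$ and $\varphi(u,uw)=q$. For $v$ I must then pick $\sigma:=\varphi(v,uv)\in[4]\setminus\varphi'(I_v)$ with $\sigma\neq p$ (so the two weak incidences of $u$ differ) and $\omega:=\varphi(u,uv)\in[4]\setminus\varphi'(A_v)$ with $\omega\neq q$ (so the two strong incidences of $u$ differ), subject to $\sigma\neq\omega$ and to $\{\sigma,\omega\}$ covering the set $M$ of colors missing from $\varphi'(I_v)\cup\varphi'(A_v)$, as \ref{iv} at $v$ demands. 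A short analysis of $|M|\in\{0,1,2\}$ reduces everything to a single bad configuration: $|M|=1$ with $[4]\setminus\varphi'(I_v)=\{m,p\}$ and $[4]\setminus\varphi'(A_v)=\{m,q\}$, in which both ways of covering $m$ collide with $p$ or with $q$.

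Ruling out this configuration is the step I expect to be the main obstacle, and it is precisely where the triangle edge $vw$ enters. Writing $\alpha:=\varphi'(v,vw)$ and $\beta:=\varphi'(w,vw)$, which are distinct by \ref{i}, I would use $\alpha\in\varphi'(I_v)\cap\varphi'(A_w)$ and $\beta\in\varphi'(A_v)\cap\varphi'(I_w)$. These memberships force $p\neq\beta$ and $q\neq\alpha$, while in the bad case $p\notin\varphi'(I_v)$ and $q\notin\varphi'(A_v)$ give $p\neq\alpha$ and $q\neq\beta$; hence $\{p,q\}=[4]\setminus\{\alpha,\beta\}$. But $m\notin\{\alpha,\beta\}$ as well, whereas $m\neq p$ and $m\neq q$, which is impossible in a four-color palette. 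This contradiction shows $\varphi'$ always extends to $G$, contradicting $\rho$-$\Delta$-criticality and proving the lemma.
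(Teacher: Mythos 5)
Your proof is correct, but it takes a genuinely different route from the paper's. The paper first invokes Lemma~\ref{lem:2-no-minus2} to force $\Delta=4$ (since $uv$ is an edge with $\deg_G(u)=2$ and $\deg_G(v)=3\leq\Delta-2$ whenever $\Delta\geq 5$), then deletes the \emph{edge} $uv$ and runs an explicit enumeration over the six pre-assigned colors on $(v,vx)$, $(x,vx)$, $(v,vw)$, $(w,vw)$, $(w,uw)$, $(u,uw)$, split by the size of $\{1,2\}\cup\{a,b\}$. You instead delete the \emph{vertex} $u$, observe that the colors of $(w,uw)$ and $(u,uw)$ are forced to be the unique colors $p,q$ missing from $\varphi'(I_w)$ and $\varphi'(A_w)$, reduce the choice of $\varphi(v,uv)$ and $\varphi(u,uv)$ to a single obstructing configuration $[\Delta]\setminus\varphi'(I_v)=\{m,p\}$, $[\Delta]\setminus\varphi'(A_v)=\{m,q\}$, and then kill it with the counting argument $\{p,q\}=[4]\setminus\{\alpha,\beta\}\ni m$ versus $m\notin\{p,q\}$, where $\alpha,\beta$ are the colors on the triangle edge $vw$ --- I checked the case analysis on $|M|$ and the final contradiction, and they hold. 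Your approach is cleaner and makes explicit exactly where the triangle hypothesis enters (through $\alpha\in\varphi'(I_v)\cap\varphi'(A_w)$ and $\beta\in\varphi'(A_v)\cap\varphi'(I_w)$), at the cost of having to re-verify conditions \ref{i}--\ref{iv} around all three vertices $u,v,w$ rather than only around $u$ and $v$; the paper's edge-deletion version keeps more of $\varphi'$ intact and trades insight for a longer but mechanical case check. One economy you missed: your $\Delta\geq 5$ cases are redundant, since Lemma~\ref{lem:2-no-minus2} already excludes the edge $uv$ there (the greedy and covering arguments you give for $\Delta\geq 5$ are nevertheless correct).
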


\begin{proof}
Suppose for a contradiction that such a triangle exists in $G$. 
By Lemma \ref{lem:2-no-minus2}, we have $\Delta=4$.
Since $G$ is $\rho$-$4$-critical, $G-uv$ has a conditional incidence $4$-coloring $\varphi'$. Let $N_{G}(v)=\{u,w,x\}$ and $N_G(w)=\{u,v,y,z\}$. Assume the colors on $(v,vx)$, $(x,vx)$, $(v,vw)$, $(w,vw)$, $(w,uw)$ and $(u,uw)$ are $1,a,2,b,c,d$, ($a\neq b$, $a\neq1$, $b\neq 2$, $b\neq c$, $c\neq d$ and $d\neq 2$) respectively. 

If $|\{1,2\}\cup \{a,b\}|=2$, then $a=2$ and $b=1$. If $d\not\in \{1,2\}$, say, $d=3$, then $c\neq 3$ and thus we can color $(u,uv)$ with $4$ and $(v,uv)$ with $3$. If $d \in \{1,2\}$, then $d=1$ and color $(v,uv)$ with $a'\in \{3,4\}\setminus \{c\}$ and $(u,uv)$ with $b'\in \{3,4\}\setminus \{a'\}$.

If $|\{1,2\}\cup \{a,b\}|=3$, then assume, without loss of generality, that $4\not\in \{1,2\}\cup \{a,b\}$.

If $a=3$, then $b=1$. 
If $c=4$, then $d\neq 4$, and we color $(u,uv)$ with 4 and $(v,uv)$ with color $3$.
If $c\neq 4$, then we are able to color $(u,uv)$ with 2 as $d\neq 2$ and $(v,uv)$ with color $4$.


If $b=3$, then $a=2$.
If $c=4$, then $d\neq 4$, and we color $(u,uv)$ with 4 and $(v,uv)$ with  color $3$.
If $c\neq 4$ and $d\neq 1$, then we color $(u,uv)$ with 1 and $(v,uv)$ with  color $4$.
If $c\neq 4$ and $d=1$, then $c=2$ and we can color $(u,uv)$ with $4$ and $(v,uv)$ with $3$.



If $|\{1,2\}\cup \{a,b\}|=4$, then assume by symmetry that $a=3$ and $b=4$. Since $d\neq 2$, we are able to  color $(u,uv)$ with $2$ and $(v,uv)$ with a color
in $\{3,4\}\setminus \{c\}$.

In each of the above cases, the resulting extended coloring is a conditional incidence $4$-coloring $\varphi$ of $G$ as the property that $\varphi(I_v\cup A_v)=[4]$ is satisfied throughout the extension.
\end{proof}

\begin{figure}
    \centering
    \includegraphics[width=16cm]{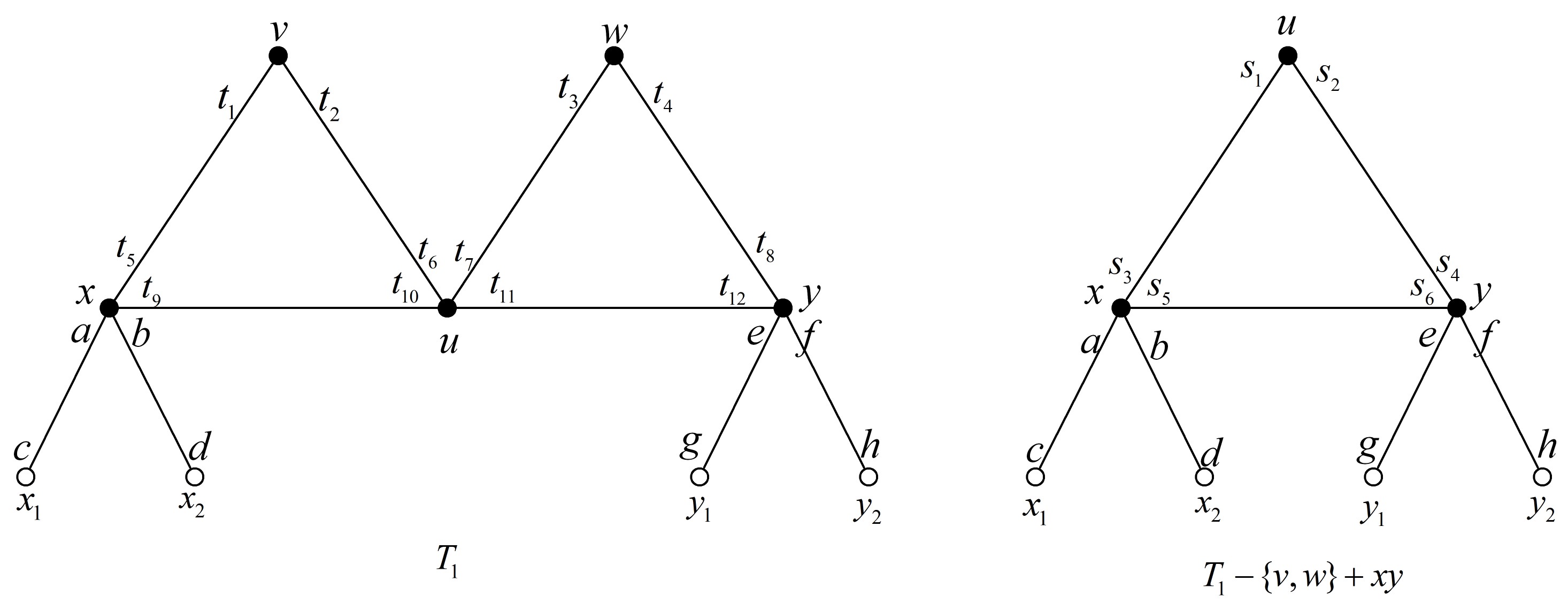}
    \caption{The configuration $T_1$, where $xy$ is a non-edge and we may have $\{x_1,x_2\}\cap \{y_1,y_2\}\neq \emptyset$, and the operation $T_1-\{v,w\}+xy$.}
    \label{fig:pic1}
\end{figure}

In the following, we use $T_1$ to stand for the configuration as shown by the left picture of Figure \ref{fig:pic1}.

\begin{lem}\label{no-T1}
If $G$ is a $\varrho$-$\Delta$-critical graph and $\Delta\geq 4$, then $G$ does not contain the configuration $T_1$.
\end{lem}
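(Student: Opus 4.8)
The plan is to assume, for contradiction, that the $\varrho$-$\Delta$-critical graph $G$ contains $T_1$, and to manufacture a conditional incidence $\Delta$-coloring of $G$, contradicting criticality. Reading $T_1$ off Figure~\ref{fig:pic1}, I take it to be the four-cycle $x\,v\,y\,w$ in which $\deg_G(v)=\deg_G(w)=2$, the diagonal $xy$ is a non-edge, and $x$ (resp.\ $y$) has exactly two further neighbours $x_1,x_2$ (resp.\ $y_1,y_2$), so that $\deg_G(x)=\deg_G(y)=4$. My first step is to pin down $\Delta$: since $v$ is a $2$-vertex adjacent to the two $4$-vertices $x,y$, Lemma~\ref{lem:2-no-minus2} forces $4\geq\Delta-1$, while Lemma~\ref{lem:2-no-twominus1} rules out $4=\Delta-1$; hence $\Delta=4$, and it suffices to construct a conditional incidence $4$-coloring of $G$.

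The crucial point is that here criticality is with respect to \emph{minors}, which is exactly what lets me add the edge $xy$. Let $G'=T_1-\{v,w\}+xy$, that is, delete $v,w$ and insert $xy$. I claim $G'$ is a minor of $G$: delete the vertex $w$ and then contract the edge $xv$; since $v$ is adjacent to $y$, the contracted vertex (still called $x$) becomes adjacent to $y$, producing exactly $x\sim x_1,x_2,y$ and $y\sim y_1,y_2,x$ (a coincidence $\{x_1,x_2\}\cap\{y_1,y_2\}\neq\emptyset$ merely suppresses a parallel edge and is harmless). Only $x$ and $y$ change degree, dropping to $3$, so $\Delta(G')\leq 4$. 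Because $G$ is $\varrho$-$\Delta$-critical, the minor $G'$ admits a conditional incidence $4$-coloring $\varphi'$.

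Next I would extend $\varphi'$ to $G$ by keeping every incidence not lying on the four cycle-edges $xv,vy,yw,wx$ and recoloring only the eight incidences on those edges; all conditions at the outside neighbours $x_1,x_2,y_1,y_2$ are then inherited untouched. Write $p=\varphi'(x,xy)$ and $q=\varphi'(y,xy)$, which are distinct by \ref{i}. Since $\deg_G(x)=4=\Delta$, condition \ref{iii} forces $\varphi(I_x)=[4]$, so the two new strong incidences $(x,xv),(x,wx)$ must realise the pair $\{p,p'\}:=[4]\setminus\{\varphi'(x,xx_1),\varphi'(x,xx_2)\}$, and dually the two new weak incidences $(v,xv),(w,wx)$ must realise $\{q,q'\}:=[4]\setminus\{\varphi'(x_1,xx_1),\varphi'(x_2,xx_2)\}$; symmetric pairs $\{q,q''\}$ and $\{p,p''\}$ arise at $y$. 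Conditions \ref{ii} and \ref{iii} at $x$ and $y$, as well as \ref{iv} (automatic, since $I_x=I_y=[4]$), then hold for \emph{every} ordering of these four pairs.

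The remaining task is where the real work lies: order the four colour-pairs so that \ref{i} holds on each of $xv,vy,yw,wx$ and so that \ref{ii} and \ref{iii} hold at the two $2$-vertices $v$ and $w$, i.e.\ $\varphi(v,vx)\neq\varphi(v,vy)$, $\varphi(x,xv)\neq\varphi(y,vy)$ and their $w$-analogues. This is a finite constraint-satisfaction problem among the $2^4$ orderings, and I expect the main obstacle to be organising it into a short case analysis according to the coincidences among $p,q$ and the fixed colours (equivalently, among $\{p,p'\},\{p,p''\},\{q,q'\},\{q,q''\}$); the symmetries $v\leftrightarrow w$ and $x\leftrightarrow y$ should reduce this to a handful of genuinely distinct cases, in each of which an explicit ordering can be exhibited. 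Producing such an ordering in every case yields a conditional incidence $4$-coloring of $G$, contradicting $\varrho$-$\Delta$-criticality and establishing that $G$ contains no $T_1$.
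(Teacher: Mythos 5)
There are two genuine problems here. First, you have misread the configuration $T_1$. It is not the four\-/cycle $xvyw$ you describe: $T_1$ is the configuration coming from \ref{c4} of Lemma \ref{lem:outerplanar}, namely a vertex $u$ with $\deg_G(u)=4$ carrying two triangles $uvx$ and $uwy$, where $\deg_G(v)=\deg_G(w)=2$, $xy$ is a non-edge, and $x,y$ have degree $4$ with outside neighbours $x_1,x_2$ and $y_1,y_2$. (This is visible from the proof itself, which restricts $\varphi'$ to $G-\{u,v,w\}$ and must recolor $12$ incidences, i.e.\ the six edges $uv,uw,ux,uy,vx,wy$ --- your configuration has only four edges and eight incidences to recolor, and no vertex $u$ at all.) A lemma about your four-cycle configuration would not exclude \ref{c4} and so could not be used where Lemma \ref{no-T1} is invoked in the proof of Theorem \ref{thm:outerplanar}. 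Your preliminary steps (forcing $\Delta=4$ via Lemmas \ref{lem:2-no-minus2} and \ref{lem:2-no-twominus1}, and using $\varrho$-criticality to color the minor $G-\{v,w\}+xy$) do transfer to the correct configuration, but the extension problem you then face is the larger one with twelve unknowns.

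Second, and independently of the misidentification, the heart of the argument is missing: you reduce to a finite constraint-satisfaction problem over the $2^4$ orderings and then only ``expect'' that an explicit ordering exists in every case. That expectation cannot be waved through. In the actual proof the analogous exhaustive check (Algorithms \ref{algo:check} and \ref{algo:reducibility}) shows that certain boundary-color patterns $(a,b,c,d,e,f,g,h)$ are \emph{not} extendable at all; the lemma only survives because those exceptional patterns are then shown to be incompatible with $\varphi'$ being a conditional incidence $4$-coloring, using information one layer further out (the colors $s_1,s_2,s_5,s_6$ at the second neighbourhood). So a completed proof needs either the full case enumeration together with a separate argument disposing of the non-extendable patterns, or some structural reason why they cannot occur --- neither of which your proposal supplies.
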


\begin{proof}
Suppose to the contrary that $G$ contains a copy of $T_1$. 
By Lemmas \ref{lem:2-no-minus2} and \ref{lem:2-no-twominus1}, we have $\Delta=4$.
Since $G$ is $\varrho$-$4$-critical and $G'=G-\{v,w\}+xy$ is a minor of $G$, $G'$
 has a conditional incidence $4$-coloring $\varphi'$.
The right picture of Figure \ref{fig:pic1} shows a partial coloring of $\varphi'$. 

The idea of the proof is to restrict $\varphi'$ to a partial coloring of $G-\{u,v,w\}$ and then extend it to $G$ by coloring the remaining 12 incidences using colors $t_1,t_2,\ldots,t_{12}$ as shown by the first picture of Figure \ref{fig:pic1}. In order to make the extended coloring being a conditional incidence $4$-coloring of $G$, we need choose each $t_i$ carefully from $[4]$ such that 
\begin{align*}
   \Omega=\prod \limits_{i=1}^{12}\prod \limits_{\lambda\in \Lambda_i}(t_i-\lambda)\neq 0,
\end{align*}
where
$\Lambda_1=\{t_2,t_5,t_{10},c,d\}$,
$\Lambda_2=\{t_3,t_6,t_{9},t_{12}\}$,
$\Lambda_3=\{t_4,t_7,t_{9},t_{12}\}$,
$\Lambda_4=\{t_8,t_{11},g,h\}$,
$\Lambda_5=\{t_6,t_{9},a,b\}$,
$\Lambda_6=\{t_7,t_{10},t_{11}\}$,
$\Lambda_7=\{t_8,t_{10},t_{11}\}$,
$\Lambda_8=\{t_{12},e,f\}$,
$\Lambda_9=\{t_{10},t_{12},a,b\}$,
$\Lambda_{10}=\{t_{11},c,d\}$,
$\Lambda_{11}=\{t_{12},g,h\}$,
and $\lambda_{12}=\{e,f\}$.
If such $t_i$'s exist, then we say that $(a,b,c,d,e,f,g,h)$ is extendable.

We traverse all cases of $(a,b,c,d,e,f,g,h)$ and check whether all of them are extendable by computer assistance.
Algorithm \ref{algo:check} defined a function \texttt{CHECK}($a$,$b$,$c$,$d$,$e$,$f$,$g$,$h$). 
This function returns a non-zero vector if and only if we input integers $a,b,c,d,e,f,g,h\in [4]$ such that $(a,b,c,d,e,f,g,h)$ is extendable.

Assume $a=1$ and $b=2$. We run Algorithm \ref{algo:reducibility} on a usual personal computer using MATLAB. It returns a zero matrix $M$ in less than one minute.
Since $(a,b,c,d,e,f,g,h)$ comes from the conditional incidence $4$-coloring $\varphi'$, it naturally holds that 
$c\not\in \{1,d\}$, $d\neq 2$, $g\not\in \{e,h\}$, and $f\not\in \{e,h\}$. So lines \ref{li:1}--\ref{li:10} of Algorithm \ref{algo:reducibility}
returns a matrix such that if $(a,b,c,d,e,f,g,h)$ is not extendable then it is coincide with some row of this matrix. 
Since Algorithm \ref{algo:reducibility} finally returns a zero matrix, we conclude that 
if $(a,b,c,d,e,f,g,h)$ is not extendable then either 
\begin{itemize}
\item $\{1,2\}=\{e,f\}$, $|\{c,d\}\cap \{g,h\}|=1$ and $(\{c,d\}\cap \{g,h\})\cap \{1,2\}=\emptyset$, or 
\item $\{1,2\}=\{e,f\}$, $|\{c,d\}\cap \{g,h\}|=1$, $(\{c,d\}\cap \{g,h\})\cap \{1,2\}\not=\emptyset$ and $(\{c,d\}\oplus \{g,h\}) \cap \{1,2\}=\emptyset$, or 
\item $\{c,d\}=\{g,h\}$, $|\{1,2\}\cap \{e,f\}|=1$ and $(\{1,2\}\cap \{e,f\})\cap \{c,d\}=\emptyset$, or  
\item $\{c,d\}=\{g,h\}$, $|\{1,2\}\cap \{e,f\}|=1$, $(\{1,2\}\cap \{e,f\})\cap \{c,d\}\not=\emptyset$ and $(\{1,2\}\oplus \{e,f\}) \cap \{c,d\}=\emptyset$.
\end{itemize}
Here $\oplus$ is the operation of symmetric difference.

We look back at the conditional incidence $4$-coloring $\varphi'$. 

If the first case occurs, then assume, without loss of generality, that $e=1$, $f=2$, and $c=g=3$. 
It follows $3\not\in \{s_5,s_6\}$ and thus $s_5=s_6=4$, a contradiction.

If the second case occurs, then assume, without loss of generality, that $e=d=h=1$, $f=2$, $c=3$, and $g=4$. It follows $s_1=s_2=2$, a contradiction.

Similarly, we still have contradiction if we meet the third or the fourth case.

Hence every valid $(a,b,c,d,e,f,g,h)$ is extendable.
\end{proof}

\begin{algorithm}[htp]
\BlankLine
\KwIn{Integers $a,b,c,d,e,f,g,h\in [4]$}
\KwOut{A solution matrix $A$}
\BlankLine

\For{$t_1=1$ to $4$}{
\For{$t_2=1$ to $4$}{
\For{$t_3=1$ to $4$}{
\For{$t_4=1$ to $4$}{
\For{$t_5=1$ to $4$}{
\For{$t_6=1$ to $4$}{
\For{$t_7=1$ to $4$}{
\For{$t_8=1$ to $4$}{
\For{$t_9=1$ to $4$}{
\For{$t_{10}=1$ to $4$}{
\For{$t_{11}=1$ to $4$}{
\For{$t_{12}=1$ to $4$}{
\If{$\Omega\neq 0$}{$A \gets [t_1,t_2,t_3,t_4,t_5,t_6,t_7,t_8,t_9,t_{10},t_{11},t_{12}]$\\
return
}
}
}
}
}
}
}
}
}
}
}
}
}
$A \gets [0,0,0,0,0,0,0,0,0,0,0,0]$
\caption{\texttt{CHECK}($a$,$b$,$c$,$d$,$e$,$f$,$g$,$h$)}\label{algo:check}
\end{algorithm}

\begin{algorithm}[htp]
\BlankLine
\KwIn{Null}
\KwOut{An inspection matrix $M$}
\BlankLine
$i \gets 1$ \label{li:1} \\
\For{$c=1$ to $4$}{
\For{$d=1$ to $4$}{
\For{$e=1$ to $4$}{
\For{$f=1$ to $4$}{
\For{$g=1$ to $4$}{
\For{$h=1$ to $4$}{
\If{$c\not\in \{1,d\}$, $d\neq 2$, $g\not\in \{e,h\}$, $f\not\in \{e,h\}$, and 
\texttt{CHECK}$(1,2,c,d,e,f,g,h)=[0, 0, 0, 0, 0, 0, 0, 0, 0, 0, 0, 0]$}
{
$M(i,:)=[1,2,c,d,e,f,g,h]$\\
$i=i+1$ \label{li:10}
}
}
}
}
}
}
}
$s$ $\gets$ the number of rows of $M$\\
\For{$j=1$ to $s$}
{
$c \gets M(j,3)$\\
$d \gets M(j,4)$\\
$e \gets M(j,5)$\\
$f \gets M(j,6)$\\
$g \gets M(j,7)$\\
$h \gets M(j,8)$\\
\If{$\{1,2\}=\{e,f\}$ and $|\{c,d\}\cap \{g,h\}|=1$}
{\If{$(\{c,d\}\cap \{g,h\})\cap \{1,2\}=\emptyset$ or  $(\{c,d\}\oplus \{g,h\}) \cap \{1,2\}=\emptyset$}{
$M(j,:) \gets [0,0,0,0,0,0,0,0]$
}
}

\If{$\{c,d\}=\{g,h\}$ and $|\{1,2\}\cap \{e,f\}|=1$}
{\If{$(\{1,2\}\cap \{e,f\})\cap \{c,d\}=\emptyset$ or  $(\{1,2\}\oplus \{e,f\}) \cap \{c,d\}=\emptyset$}{
$M(j,:) \gets [0,0,0,0,0,0,0,0]$
}
}
}
\caption{\texttt{REDUCIBILITY-INSPECTION}()}\label{algo:reducibility}
\end{algorithm}

\begin{figure}
    \centering
    \includegraphics[width=8cm]{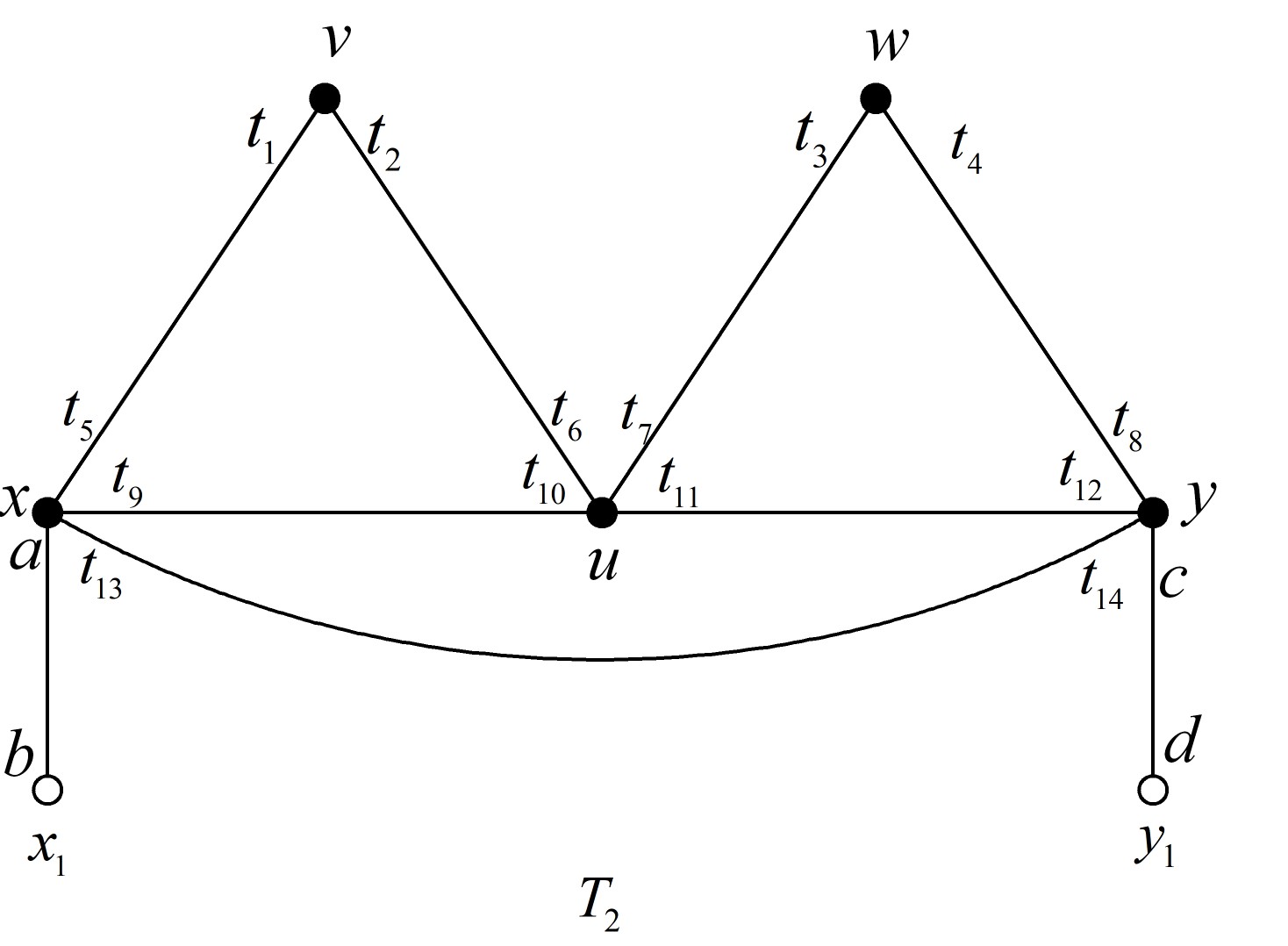}
    \caption{The configuration $T_2$, where we may have $x_1=y_1$.}
    \label{fig:pic2}
\end{figure}

We denote by $T_2$ the configuration described by the picture of Figure \ref{fig:pic2}.

\begin{lem}\label{no-T2}
If $G$ is a $\rho$-$\Delta$-critical graph and $\Delta\geq 4$, then $G$ does not contain the configuration $T_2$.
\end{lem}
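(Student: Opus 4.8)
The plan is to argue by contradiction in close parallel with Lemma~\ref{no-T1}. The one genuinely new feature is that in $T_2$ the vertices $x$ and $y$ are joined by an edge, so a \emph{subgraph} of $G$ (rather than a proper minor) already realises the reduced configuration; this is exactly why the hypothesis here is $\rho$-$\Delta$-criticality instead of $\varrho$-$\Delta$-criticality. Reading Figure~\ref{fig:pic2}, the configuration has $\deg_G(u)=\deg_G(x)=\deg_G(y)=4$, $\deg_G(v)=\deg_G(w)=2$, triangles $uvx$ and $uwy$, and $xy\in E(G)$. Since $v$ is then a degree-$2$ vertex both of whose neighbours $u,x$ have degree $4$, Lemma~\ref{lem:2-no-minus2} forces $\Delta\le 5$ while Lemma~\ref{lem:2-no-twominus1} excludes $\Delta=5$; hence $\Delta=4$, the vertices $u,x,y$ span a triangle, $x$ has a single further neighbour $x_1$ and $y$ a single further neighbour $y_1$, possibly with $x_1=y_1$.

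Because $xy\in E(G)$, the graph $G'=G-\{v,w\}$ is a subgraph of $G$, so by $\rho$-$4$-criticality it admits a conditional incidence $4$-coloring $\varphi'$. Following Lemma~\ref{no-T1}, I would then erase the colors of $\varphi'$ on every incidence meeting $u,v$ or $w$, keeping $\varphi'$ on $G-\{u,v,w\}$ (which still colors the incidences of the edges $xy$, $xx_1$ and $yy_1$), and recolor the $12$ freed incidences on $uv,uw,ux,uy,vx,wy$ by values $t_1,\dots,t_{12}\in[4]$. Recording the fixed colors that $\varphi'$ leaves at $x$ and $y$ as parameters $a,b,\dots$, the requirement that the recoloring satisfy \ref{i}--\ref{iv} of Definition~\ref{def:4} turns, just as before, into a nonvanishing condition $\prod_i\prod_{\lambda\in\Lambda_i}(t_i-\lambda)\ne 0$; I would observe that \ref{iv} is automatic at the three degree-$4$ vertices once \ref{iii} holds, so only \ref{i}, \ref{ii}, \ref{iii} are binding. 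It then suffices to produce a feasible $(t_1,\dots,t_{12})$ for every admissible boundary pattern, which I would settle with \texttt{CHECK} and \texttt{REDUCIBILITY-INSPECTION} (Algorithms~\ref{algo:check} and~\ref{algo:reducibility}) after updating each $\Lambda_i$ to reflect the new adjacencies.

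The main obstacle should be precisely those new adjacencies. In Lemma~\ref{no-T1} the boundary data at $x$ and at $y$ were independent, but the edge $xy$ now makes the strong incidences of $x$ adjacent to those of $y$ and forces their parameter sets to share the two colors carried by $xy$, while the coincidence $x_1=y_1$ couples them further through the common vertex $z=x_1=y_1$ (whose two strong incidences on $zx,zy$ must differ). These extra constraints shrink the colors simultaneously available at $u,x,y$, and the delicate case is the one where they overlap maximally, so that a greedy choice of the $t_i$ is blocked. I would treat it with the device of Subcase~3.3 in Lemma~\ref{lem:2-no-twominus1}: form two competing partial extensions, show that an obstruction to one of them pins the missing colors down exactly, and then exploit that rigidity, together with the slack at the degree-$2$ vertices $v,w$, to complete the other. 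Verifying that the (adjusted) inspection matrix is again identically zero would finish the argument.
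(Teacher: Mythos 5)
Your setup is the same as the paper's: you identify the configuration correctly, you get $\Delta=4$ from Lemmas \ref{lem:2-no-minus2} and \ref{lem:2-no-twominus1} exactly as the paper does, and you correctly note that because $xy\in E(G)$ the reduced graph is a genuine subgraph, so $\rho$-criticality (rather than $\varrho$-criticality) suffices. But from that point on your proposal is a plan rather than a proof. The entire content of the lemma is the claim that \emph{every} admissible boundary pattern is extendable, and you defer that claim to an unexecuted computer search (``I would settle with \texttt{CHECK} \dots after updating each $\Lambda_i$'') plus an unspecified fallback (``form two competing partial extensions \dots exploit that rigidity'') for the failures you anticipate. Neither the updated constraint sets $\Lambda_i$, nor the outcome of the search, nor the case analysis of the fallback is actually produced, so nothing is verified. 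The paper, by contrast, closes the lemma constructively: after normalizing so that $\varphi'(x,xx_1)=1$, $\varphi'(x_1,xx_1)=2$, the boundary reduces to six non-isomorphic choices of $(c,d)=(\varphi'(y,yy_1),\varphi'(y_1,yy_1))$, and for each it exhibits an explicit $14$-tuple of colors; no non-extendable pattern arises and no Subcase-3.3-style device is needed.

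There is also a concrete structural difference that works against you. You propose to keep $\varphi'$ fixed on the edge $xy$ and recolor only the $12$ incidences of $uv,uw,ux,uy,vx,wy$. The paper instead erases and recolors the two incidences of $xy$ as well ($14$ incidences total). This matters twice over: freeing $xy$ reduces the boundary data to just the four colors on $xx_1$ and $yy_1$ (hence only six cases), and it supplies exactly the slack that makes every case extendable by a direct choice. With $xy$ frozen, the strong and weak incidence sets of the degree-$4$ vertices $x$ and $y$ each already contain two fixed colors, so conditions \ref{ii} and \ref{iii} essentially determine the free colors at $x$ and $y$ as sets before you even reach $u$; whether a solution still always exists is precisely the question you would have to settle by computation, and you have not. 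So the gap is twofold: the decisive verification is missing, and the reduction you chose is strictly harder than the one the paper uses.
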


\begin{proof}
Suppose for a contradiction that $G$ contains a copy of $T_2$. 
By Lemmas \ref{lem:2-no-minus2} and \ref{lem:2-no-twominus1}, we have $\Delta=4$.
Since $G$ is $\rho$-$4$-critical, $G-\{u,v,w\}$ has a conditional incidence $4$-coloring $\varphi'$. Assume $\varphi'(x,xx_1)=1$, $\varphi'(x_1,xx_1)=2$, $\varphi'(y,yy_1)=c$, and $\varphi(y_1,yy_1)=d$.
We are to extend $\varphi'$ to $G$ by coloring the remaining 14 incidences as marked in the picture with colors $t_1,t_2,\ldots,t_{14}$. Below we distinguish six non-isomorphic cases and show what those $t_i$'s are.
Let $T=[t_1,t_2,\ldots,t_{14}]$.

If $c=1$ and $d=2$, then choose $T=[3,1,4,1,4,2,3,2,2,1,4,3,3,4]$.

If $c=1$ and $d=3$, then choose $T=[3,1,4,1,4,3,2,3,3,1,4,2,2,4]$.

If $c=2$ and $d=1$, then choose $T=[3,4,3,4,4,3,4,3,2,1,2,1,3,4]$.

If $c=2$ and $d=3$, then choose $T=[1,2,1,4,4,3,2,1,3,4,1,4,2,3]$.

If $c=3$ and $d=2$, then choose $T=[1,3,1,4,2,4,2,1,4,3,1,2,3,4]$.

If $c=3$ and $d=4$, then choose $T=[3,4,2,1,4,2,4,2,3,1,3,1,2,4]$.

One can check that the resulting coloring in each case is a conditional incidence $\Delta$-coloring of $G$.
\end{proof}

\begin{figure}
    \centering
    \includegraphics[width=8cm]{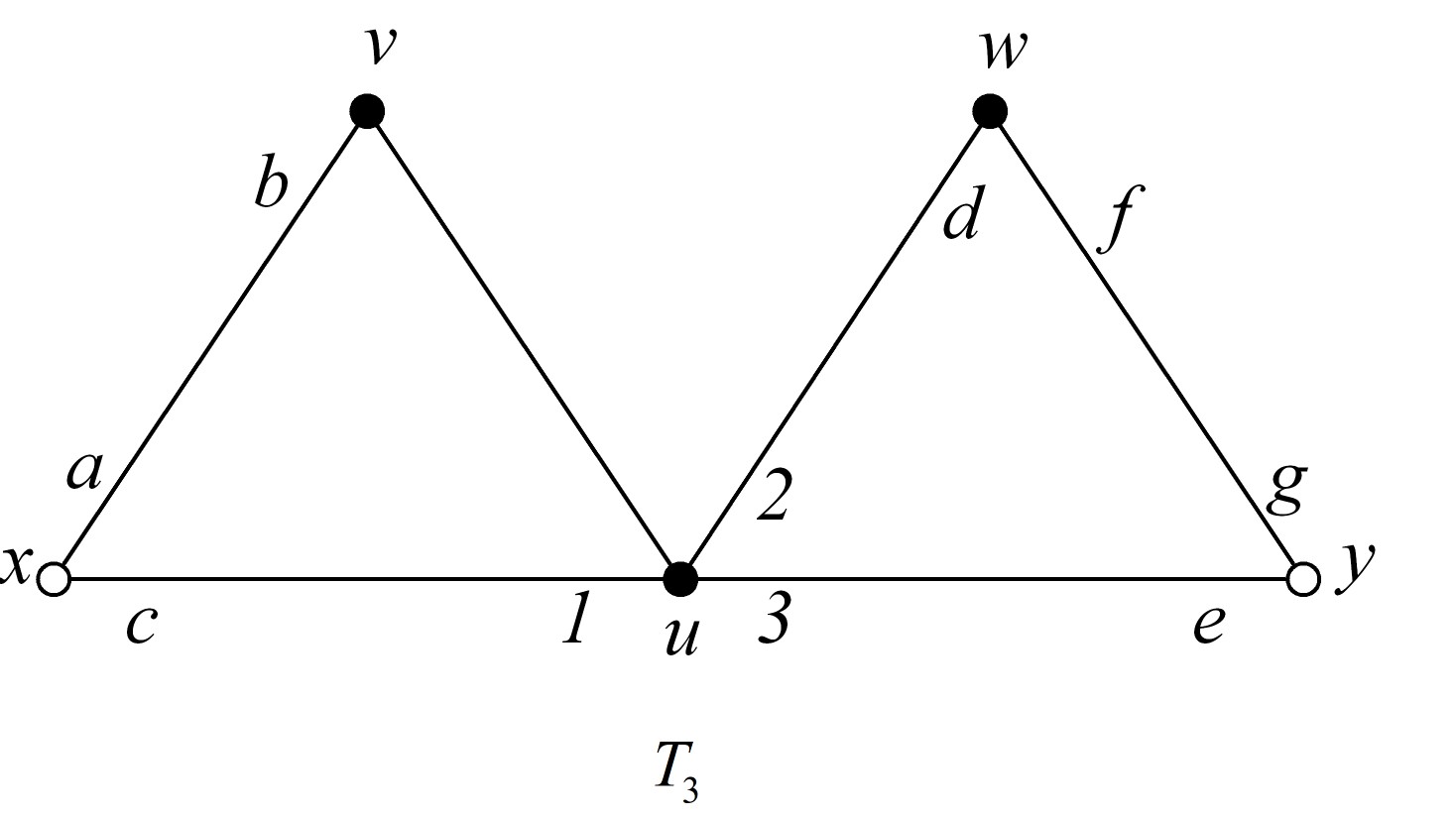}
    \caption{The configuration $T_3$, where $\max\{\deg_G(x),\deg_G(y)\}\leq 5$.}
    \label{fig:pic3}
\end{figure}

We denote by $T_3$ the configuration described by the picture of Figure \ref{fig:pic3}.

\begin{lem}\label{no-T3}
If $G$ is a $\rho$-$\Delta$-critical graph and $\Delta\geq 5$,then $G$ does not contain the configuration $T_3$.
\end{lem}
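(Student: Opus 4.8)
The plan is to argue by contradiction in the same manner as Lemmas \ref{no-T1} and \ref{no-T2}. Assume $G$ contains a copy of $T_3$. First I would determine $\Delta$: the degree-$2$ vertices occurring inside $T_3$ force their neighbours to have large degree via Lemmas \ref{lem:2-no-minus2} and \ref{lem:2-no-twominus1}, and together with the hypothesis $\max\{\deg_G(x),\deg_G(y)\}\leq 5$ this should restrict the admissible degree sequence of the configuration. Since the statement only claims the result for $\Delta\geq 5$, I expect the genuinely constrained regime to be $\Delta\in\{5,6\}$, with all larger values of $\Delta$ handled uniformly.

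Next I would delete from $G$ the internal low-degree vertices of $T_3$ (the degree-$2$ vertices lying on the triangles at $x$ and $y$), obtaining a proper subgraph $G'$. Since $G$ is $\rho$-$\Delta$-critical, $G'$ admits a conditional incidence $\Delta$-coloring $\varphi'$. Restricting $\varphi'$ fixes the colours on all incidences at $x$ and $y$ that survive the deletion, and the remaining work is to colour the handful of deleted incidences so that conditions \ref{i}--\ref{iv} continue to hold. For each deleted edge I must avoid, at each endpoint, the colours already used on its strong incidences (for \ref{iii}) and on its weak incidences (for \ref{ii}), together with the colour of the partner incidence on the same edge (for \ref{i}).

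I would organise the extension as a finite case analysis driven by the colours that $\varphi'$ places on the few boundary incidences at $x$ and $y$ — precisely the role played by the colours $c,d$ in the six cases of Lemma \ref{no-T2} — and in each case either write down an explicit admissible colour vector or, as in Lemma \ref{no-T1}, certify extendability by a finite search (an analogue of \texttt{CHECK} in Algorithm \ref{algo:check}) over all colourings in $[\Delta]$ of the deleted incidences. Because the number of deleted incidences and of boundary colours is bounded, this search is finite and terminates.

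The main obstacle will be condition \ref{iv}. Whenever $\deg_G(x)$ or $\deg_G(y)$ equals $\Delta-1$ or $\Delta$, I cannot simply avoid forbidden colours: I must additionally guarantee that every colour of $[\Delta]$ appears among $A_v\cup I_v$ at that vertex $v$, \ie that the completed colouring is surjective there. This is exactly where the interaction of $\Delta\geq 5$ with $\max\{\deg_G(x),\deg_G(y)\}\leq 5$ is decisive: when $\Delta\geq 6$ the vertices $x,y$ have degree at most $5<\Delta-1$, so \ref{iv} does not constrain them and the extension is routine; the delicate situation is $\Delta=5$ with $x$ or $y$ of degree $4$ or $5$, where surjectivity must be enforced by hand. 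This is the part of the argument I expect to demand the most careful bookkeeping, and the most likely place where an explicit colour vector (rather than a generic availability count) will be needed.
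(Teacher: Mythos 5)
Your overall framework matches the paper's: argue by contradiction, use the degree lemmas to pin down $\Delta$, delete part of the configuration, invoke criticality to obtain a conditional incidence $\Delta$-coloring of the smaller graph, and extend it back while watching condition \ref{iv}. You also correctly identify condition \ref{iv} (surjectivity at vertices of degree at least $\Delta-1$) as the real obstruction, which is indeed where the paper's case analysis does its work. Two differences in the setup are worth noting. First, the paper does not delete vertices: it removes only the single edge $uv$, so that exactly two incidences, $(u,uv)$ and $(v,uv)$, need to be colored (with one recoloring of $(u,ux)$ in a single sub-case). Your vertex-deletion reduction is legitimate under $\rho$-$\Delta$-criticality but creates a strictly larger extension problem than necessary. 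Second, the paper immediately gets $\Delta=5$, not $\Delta\in\{5,6\}$: the center of $T_3$ has degree $4$ and is adjacent to a degree-$2$ vertex, so Lemma \ref{lem:2-no-minus2} forces $4\geq\Delta-1$, hence $\Delta\leq 5$. Your claim that $\Delta\geq 6$ is routine because $\deg_G(x)\leq 5<\Delta-1$ is also arithmetically wrong at $\Delta=6$, where $5=\Delta-1$ and \ref{iv} would still bind; you are saved only because that case cannot occur, which you did not establish.

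The genuine gap is that the case analysis is announced but never performed. The entire content of the lemma is the verification that, for every coloring $\varphi'$ the smaller graph can hand you, the extension exists; the paper carries this out explicitly by splitting on $|\{1,2,3\}\cup\{c,d,e\}|\in\{3,4,5\}$ and, in the hardest sub-case ($a=4$ and $\{b,c,d,e\}=[4]$), escapes only by recoloring an already-colored incidence $(u,ux)$. That a locally greedy choice can fail and must be repaired is exactly the phenomenon that makes the lemma nontrivial (compare the obstacle analysis in Subcase 3.3 of Lemma \ref{lem:2-no-twominus1} and the computer check in Lemma \ref{no-T1}). Writing ``in each case either write down an explicit admissible colour vector or certify extendability by a finite search'' asserts the conclusion rather than proving it: the finite search might return no admissible vector for some boundary coloring, and then you would need either a recoloring argument or a structural contradiction, neither of which your proposal supplies. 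As it stands, the proposal is a correct plan with the decisive step missing.
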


\begin{proof}
Suppose for a contradiction that $G$ contains a copy of $T_3$.
By Lemma \ref{lem:2-no-minus2}, we have $\Delta=5$.
Since $G$ is a $\rho$-$5$-critical graph, $G-uv$ has a conditional incidence $5$-coloring $\varphi'$.
Figure \ref{fig:pic3} shows a partial coloring of $\varphi'$.

If $\{c,d,e\}=\{1,2,3\}$, then we can color $(u,uv)$ with $a'\in \{4,5\}\setminus \{a\}$
and $(v,uv)$ with $b'\in \{4,5\}\setminus \{a'\}$ to complete a conditional incidence $5$-coloring of $G$ provided $b\not\in \{4,5\}$. 
Hence $b\in \{4,5\}$. Assume by symmetry that $b=4$. It follows that $a\neq 4$ and thus we are able to color $(u,uv)$ with 4 and $(v,uv)$ with 5 to finish a conditional incidence $5$-coloring of $G$.


If $|\{1,2,3\}\cup\{c,d,e\}|=4$, then assume by symmetry that $4\in \{c,d,e\}$ and $5\not\in \{c,d,e\}$.
If $a=5$, then $b\neq 5$ and we can color $(u,uv)$ with 4 and $(v,uv)$ with 5 and obtain a conditional incidence $5$-coloring of $G$.
Hence $a\neq 5$.
If we are able to color $(u,uv)$ with $a'\in \{4,5\}\setminus\{a\}$ and $(v,uv)$ with $b'\in [5]\setminus\{b,c,d,e,a'\}$ such that $5\in \{a',b'\}$, then we obtain a conditional incidence $5$-coloring $\varphi$ of $G$ as $\varphi(I_u\cup A_u)=[5]$.
If this is impossible, then $a=4$ and $\{b,c,d,e\}=[4]$. It follows that $b,c\not\in \{4,5\}$.
Recolor $(u,ux)$ with 5 and color $(u,uv)$ with 1 and $(v,uv)$ with 5. This completes a conditional incidence $5$-coloring of $G$.





If $|\{1,2,3\}\cup\{c,d,e\}|=5$, then color $(u,uv)$ with $a'\in \{4,5\}\setminus\{a\}$ and $(v,uv)$ with $b'\in [5]\setminus\{b,c,d,e,a'\}$. 
This is possible as if $\{b,c,d,e,a'\}=[5]$ then $a'\not\in \{c,d,e\}$, which contradicts the assumption that $\{4,5\}\subseteq \{c,d,e\}$.
\end{proof}

\proof[The proof of Theorem \ref{thm:outerplanar}]
Suppose for a contradiction that 
$G$ is a minimal outerplanar graph in terms of $|V(G)|+|E(G)|$ with $\Delta(G)\leq \Delta$ 
such that $G$ is not conditional incidence $\Delta$-colorable.
It follows that $G$ is a $\varrho$-$\Delta$-critical graph.
As an outerplanar graph, $G$ contains one of the four configurations among \ref{c1}, \ref{c2}, \ref{c3}, and \ref{c4} by Lemma \ref{lem:outerplanar}.
However, $G$ does not contain \ref{c1} by Lemmas \ref{lem:relationship} and \ref{lem:minimumdegree2}, \ref{c2} by Lemmas \ref{lem:relationship} and \ref{lem:2-no-minus2}, \ref{c3} by Lemmas \ref{lem:relationship}, \ref{lem:2-no-minus2}, \ref{lem:2-no-twominus1} and \ref{lem:no234triangle}, or 
\ref{c4} by Lemmas \ref{lem:relationship}, \ref{lem:minimumdegree2}, \ref{lem:no234triangle}, \ref{no-T1}, \ref{no-T2} and \ref{no-T3}.
This contradiction completes the proof. \hfill $\square$

To close this section, we prove the following.

\begin{thm}\label{thm:outerplanar3}
$\chi^d(G)=\Delta(G)$ for every integer $d\geq 2$ if $G$ is an outerplanar graph with $\Delta(G)\geq 2$. 
\end{thm}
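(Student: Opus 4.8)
The plan is to reduce the theorem to a single upper bound on $\chi^2$ and then split on the value of $\Delta:=\Delta(G)$. By \eqref{rela} we have $\Delta(G)\le \chi^d(G)\le \chi^2(G)$ for every $d\ge 2$, so it suffices to prove $\chi^2(G)\le \Delta$; this sandwiches every $\chi^d(G)$ with $d\ge 2$ between $\Delta$ and $\chi^2(G)=\Delta$. I would treat three ranges of $\Delta$ separately. For $\Delta\ge 4$ the claim is immediate: Theorem \ref{thm:outerplanar2} gives $\chi^1(G)=\Delta$, and \eqref{rela} yields $\chi^2(G)\le \chi^1(G)=\Delta$. For $\Delta=2$ the graph $G$ is a disjoint union of paths and cycles together with possible isolated edges and vertices; Theorem \ref{thm:path} colors each path or cycle component with two colors, an isolated edge is colored with two colors by \ref{b}, and isolated vertices carry no incidences, so combining these colorings (all incidence constraints being local to components) gives $\chi^2(G)\le 2=\Delta$.

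The real work is the case $\Delta=3$, which I would handle by the critical-configuration method, proving the stronger statement that every outerplanar graph with $\Delta\le 3$ admits a $2$-defective incidence $3$-coloring. Suppose not, and let $G$ minimize $|V(G)|+|E(G)|$ among counterexamples; then every proper subgraph of $G$ has such a coloring. Before the case analysis I would record the simplification special to $\Delta\le 3$ and $d=2$: at a vertex $u$ with $\deg_G(u)\le 2$ we have $|A_u|\le 2$, so condition \ref{c} of Definition \ref{def:3} holds vacuously, whereas at a vertex $u$ with $\deg_G(u)=3$ we have $\varphi(I_u)=[3]$, so \ref{c} with $d=2$ is equivalent to demanding merely that the three weak incidences of $u$ are not all the same color. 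By Lemma \ref{lem:outerplanar}, $G$ contains one of \ref{c1}--\ref{c4}; configuration \ref{c4} requires a vertex of degree $4$ and hence cannot occur when $\Delta\le 3$, so $G$ contains \ref{c1}, \ref{c2}, or \ref{c3}.

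I would then reach a contradiction by showing each of these is reducible: delete the low-degree vertex ($u$ in \ref{c1}, \ref{c3}) or the edge $uv$ in \ref{c2}, take the coloring of the smaller outerplanar graph guaranteed by minimality, and extend it across the at most four uncolored incidences. Only the two endpoints touched by the deletion change, so only \ref{a}, \ref{b}, and the ``not monochromatic'' form of \ref{c} need checking there; in \ref{c1} and \ref{c2} the only set $A$ that can reach size $3$ sits at a degree-$3$ endpoint, and a short count of forbidden colors over the palette $[3]$ always leaves a valid choice.

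The main obstacle is configuration \ref{c3} (a triangle $uvw$ with $\deg_G(u)=2$, $\deg_G(v)=3$, and $\deg_G(w)\le 3$). Deleting $u$ forces the color of $(v,uv)$ to be the one color avoiding the two strong colors already at $v$, and likewise forces $(w,uw)$ when $\deg_G(w)=3$; one must still color $(u,uv)$ and $(u,uw)$ distinctly, obeying \ref{b} against $(v,uv)$ and $(w,uw)$ and the monochromaticity form of \ref{c} at $v$ and $w$. The delicate point is to exclude the single bad event in which these constraints force $(u,uv)$ and $(u,uw)$ to coincide. I expect to resolve it by observing that the defect constraint at $w$ is triggered only when $w$'s two pre-colored weak incidences share the color $\varphi(v,vw)$, which is exactly the color to which $(u,uv)$ is forced; hence that color is already forbidden for $(u,uw)$, so the two forced colors are automatically distinct (equivalently, the bad event would make both incidences of the edge $ww_1$ equal, contradicting \ref{b} in the inherited coloring). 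Verifying this incompatibility cleanly, and confirming the residual choices for $(u,uv)$ and $(u,uw)$ then exist, is the crux; once done, each of \ref{c1}, \ref{c2}, \ref{c3} contradicts the minimality of $G$, settling $\Delta=3$ and thereby the theorem.
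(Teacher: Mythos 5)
Your skeleton matches the paper's: reduce everything to $\chi^2(G)\le\Delta$ via \eqref{rela}, dispatch $\Delta\ge 4$ by Theorem \ref{thm:outerplanar2} and $\Delta=2$ by Theorem \ref{thm:path}, and run a minimal-counterexample argument for subcubic graphs. Where you diverge is in the combinatorial core of the subcubic case. The paper does not use the trichotomy \ref{c1}/\ref{c2}/\ref{c3} at all: it only extracts from Lemma \ref{lem:outerplanar} the weak consequence that $G$ has a vertex $u$ of degree at most $2$, and then reduces by deleting a \emph{single edge} $uv$ (or the vertex, if $\deg_G(u)\le 1$), so only two incidences ever need recoloring and the whole extension is a two-line count. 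Your route instead deletes the vertex $u$ in configuration \ref{c3} and must recolor four incidences, which is why you hit a ``crux.'' That crux does close, but for a cleaner reason than the one you give: if the two pre-existing weak incidences at the degree-$3$ vertex $v$ share a color $q$, then \ref{b} forces both strong incidences at $v$ to avoid $q$, so by \ref{a} they occupy $[3]\setminus\{q\}$ and the forced color of $(v,uv)$ is exactly $q$; hence the defect constraint $\varphi(u,uv)\ne q$ from \ref{c} is already subsumed by \ref{b} on the edge $uv$, and symmetrically at $w$. Each of $(u,uv)$ and $(u,uw)$ therefore always retains two admissible colors in $[3]$, and two $2$-subsets of a $3$-set always admit distinct representatives. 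Your parenthetical justification (``the bad event would make both incidences of $ww_1$ equal, contradicting \ref{b}'') is not right as stated --- the bad event concerns the two \emph{weak} incidences at $w$, namely $(v,vw)$ and $(w_1,ww_1)$, whose equality contradicts nothing --- so you should replace it with the argument above. In short: your proof is repairable and correct in outline, but the paper's single observation that edge-deletion at a degree-$\le 2$ vertex suffices eliminates your cases \ref{c2} and \ref{c3} entirely and is the version worth writing up.
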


\begin{proof}
It is sufficient to prove $\chi^2(G)\leq 3$ for every subcubic outerplanar graph by \eqref{rela} and by Theorems \ref{thm:path} and \ref{thm:outerplanar2}.
Suppose for a contradiction that $G$ is minimal counterexample in terms of $|V(G)|+|E(G)|$ to this result. Clearly, $G$ is connected, so by Lemma \ref{lem:outerplanar}, $G$ contains a vertex $u$ of degree at most 2.

If $\deg_G(u)=1$, then assume $N_G(u)=\{v\}$. By the minimality of $G$, 
$G-u$ has a $2$-defective incidence $3$-coloring $\varphi'$. We extend $\varphi'$ to a $2$-defective incidence $3$-coloring of $G$ by coloring $(v,uv)$ with $a\in [3]\setminus \varphi'(I_v)$ and $(u,uv)$ with $b\in [3]\setminus (S\cup \{a\})$, where $S$ is the set of colors used at most twice among $A_v$ under $\varphi'$. This is possible as $|\varphi'(I_v)|=\deg_G(v)-1\leq 2$ and $|S|\leq 1$.

If $\deg_G(u)=2$, then assume $N_G(u)=\{v,w\}$. By the minimality of $G$, $G-uv$ has a $2$-defective incidence $3$-coloring $\varphi'$.
 Since $|\varphi'(I_v)|=\deg_G(v)-1\leq 2$, we are able to color $(v,uv)$ with $a\in [3]\setminus \varphi'(I_v)$.  
If $a\in \varphi'(A_v)$, then color $(u,uv)$ with $b\in [3]\setminus \{\varphi'(u,uw),a\}$ to complete a 
$2$-defective incidence $3$-coloring of $G$.
If $a\not \in \varphi'(A_v)$, then $|\varphi'(A_v)|=2$ and therefore we are still able to color $(u,uv)$ with $b\in [3]\setminus \{\varphi'(u,uw),a\}$ to compelte the desired coloring.
\end{proof}

Remark \ref{rem1} implies that the condition of $d\geq 2$ in Theorem \ref{thm:outerplanar3} is necessary.
Combining Theorems \ref{thm:path}, \ref{thm:outerplanar2}, and \ref{thm:outerplanar3} together,
we deduce the following.

\begin{cor}
\begin{align*}
  \defi(G)= 
  \begin{cases}
        1
        &\text{if } \Delta\neq 1,3,
        \\
        2
        &\text{if } \Delta=3,
        \\
        \infty
        &\text{if } \Delta=1.
        \end{cases}
\end{align*}
if $G$ is an outerplanar graph with maximum degree $\Delta$.
\end{cor}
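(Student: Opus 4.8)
The plan is to read each line of the formula off the theorems already established, using the definition $\defi(G)=\min\{k\mid \chi^k_i(G)=\Delta(G)\}$ together with the monotonicity \eqref{rela}. The one universal ingredient I would isolate first is a lower bound: if $G$ has at least one edge then $\chi^0_i(G)=\chi_i(G)\ge \Delta+1>\Delta$, so $k=0$ never realizes $\defi(G)$ and hence $\defi(G)\ge 1$ in every finite case. This is exactly what promotes an upper bound of $1$ to the exact value.

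For the line $\defi(G)=1$ (when $\Delta\neq 1,3$) I would split into $\Delta\ge 4$ and $\Delta=2$. When $\Delta\ge 4$, Theorem \ref{thm:outerplanar2} gives $\chi^1_i(G)=\Delta$, so $\defi(G)\le 1$, and with the lower bound above $\defi(G)=1$. When $\Delta=2$, I would use that a graph of maximum degree $2$ is a disjoint union of paths, cycles, and smaller components; since incidences in distinct components never interact, $\chi^1_i$ of the whole graph is the maximum of $\chi^1_i$ over its components, and Theorem \ref{thm:path} gives $\chi^1_i=2$ on every path and cycle component. Hence $\chi^1_i(G)=\Delta=2$ and $\defi(G)=1$ again.

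For $\Delta=3$ the upper bound $\defi(G)\le 2$ is immediate from Theorem \ref{thm:outerplanar3} with $d=2$, which yields $\chi^2_i(G)=3=\Delta$. Sharpness is furnished by the graph $H$ of Remark \ref{rem1}: it is outerplanar with $\Delta=3$ and has no $1$-defective incidence $3$-coloring, so $\chi^1_i(H)>3$ while $\chi^2_i(H)=3$, giving $\defi(H)=2$. Finally, if $\Delta=1$ then $G$ has a $K_2$ component, and since $\chi^d_i(K_2)=2>1$ for every $d$ no finite $k$ achieves $\chi^k_i(G)=1$; thus $\defi(G)=\infty$.

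The main subtlety, and the step I would be most careful about, is the $\Delta=3$ line: it cannot be read as an equality holding for every outerplanar $G$ of maximum degree $3$, because graphs such as $K_{1,3}$ already satisfy $\chi^1_i=3$ and therefore have $\defi=1$. I would accordingly present the $\Delta=3$ conclusion as the extremal statement that $\defi(G)\le 2$ for all such $G$, with equality attained by the graph of Remark \ref{rem1}, whereas the $\Delta\neq 1,3$ and $\Delta=1$ lines are honest equalities valid for every $G$ in their ranges.
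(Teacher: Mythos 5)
Your derivation is correct and follows essentially the same route as the paper, which simply reads the corollary off Theorems \ref{thm:path}, \ref{thm:outerplanar2}, and \ref{thm:outerplanar3} together with the universal lower bound $\chi^0_i(G)=\chi_i(G)\geq\Delta+1$ and the sharpness example of Remark \ref{rem1}; your explicit handling of the $\Delta=2$ case by components and of $\Delta=1$ via $\chi^d_i(K_2)=2$ fills in exactly what the paper leaves implicit. Your caveat about the $\Delta=3$ line is well taken and is in fact a correction to the statement as written: by Theorem \ref{thm:tree} the star $K_{1,3}$ is an outerplanar graph with $\Delta=3$ and $\defi(K_{1,3})=1$, so that line can only be read as the extremal assertion that $\defi(G)\leq 2$ for all such $G$, with equality attained by the graph $H$ of Remark \ref{rem1} --- precisely as you present it.
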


\begin{rem}
Since the proofs in this section are all constructive, they yield a polynomial-time algorithm for outputting a $d$-defective incidence $\Delta$-coloring whenever we input an outplanar graph with maximum degree $\Delta$ and an integer $d$ such that $\Delta\geq 4$ and $d\geq 1$, or $\Delta\in \{2,3\}$ and $d\geq 2$.
\end{rem}

We leave an open problem to close this paper. 

\begin{pblm}\label{problem}
Does every bridgeless subcubic outerplanar graph has a 1-defective incidence 3-coloring?
\end{pblm}

It would be interesting to point out that not every bridgeless subcubic graph is 1-defectively incidence 3-colorable.
In the literature, a \textit{snark} is a simple, connected, bridgeless cubic graph with chromatic index equal to 4.
There are many well-known snarks including the Petersen graph, which is the smallest snark.
We claim that $\chi^1(S)\geq 4$ for every snark $S$.

Suppose for a contradiction that $S$ admits a 1-defective incidence 3-coloring $\varphi$.
For every edge $uv$, let $c(uv):=\varphi(u,uv)+\varphi(v,uv)$ (mod $3$).
For every two edges $ux$ and $uy$ incident with $u$, if $c(ux)=c(uy)$, then 
$\{\varphi(u,ux),\varphi(x,ux)\}=\{\varphi(u,uy),\varphi(y,uy)\}$, which implies $\varphi(u,ux)=\varphi(y,uy)$, $\varphi(u,uy)=\varphi(x,ux)$, and thus 
$\varphi(u,uz)=\varphi(z,uz)$ for the third neighbor $z$ of $u$, a contradiction.
Hence the images under $c$ of every two adjacent edges of $S$ are distinct and thus the mapping $c:~E(S)\longrightarrow \{0,1,2\}$ is a proper edge $3$-coloring of $S$, contradicting the fact that the chromatic index of $S$ is 4.

Note that this argument cannot return a negative answer to Problem \ref{problem}, as every subcubic outerplanar graph has chromatic index 3.



%

\bibliographystyle{abbrv}
\bibliography{references}

\begin{thebibliography}{10}

\bibitem{ALGOR198911}
I.~Algor and N.~Alon.
\newblock The star arboricity of graphs.
\newblock {\em Discrete Mathematics}, 75(1):11--22, 1989.

\bibitem{MR2522464}
S.~D. Andres.
\newblock The incidence game chromatic number.
\newblock {\em Discrete Appl. Math.}, 157(9):1980--1987, 2009.

\bibitem{MR2594489}
S.~D. Andres.
\newblock Erratum to: {T}he incidence game chromatic number [{D}iscrete {A}ppl.
  {M}ath. 157 (9) (2009) 1980--1987] [mr2522464].
\newblock {\em Discrete Appl. Math.}, 158(6):728, 2010.

\bibitem{Bender}
E.~A. Bender.
\newblock {\em Lists, Decisions and Graphs - With an Introduction to
  Probability}.
\newblock S. Gill Williamson, 2010.

\bibitem{MR3981224}
B.~Benmedjdoub, I.~Bouchemakh, and E.~Sopena.
\newblock Incidence choosability of graphs.
\newblock {\em Discrete Appl. Math.}, 265:40--55, 2019.

\bibitem{MR4285045}
H.~Bi and X.~Zhang.
\newblock Incidence coloring of {M}ycielskians with fast algorithm.
\newblock {\em Theoret. Comput. Sci.}, 878(879):1--10, 2021.

\bibitem{MR1246668}
R.~A. Brualdi and J.~J.~Q. Massey.
\newblock Incidence and strong edge colorings of graphs.
\newblock {\em Discrete Math.}, 122(1-3):51--58, 1993.

\bibitem{AIHPB_1967__3_4_433_0}
G.~Chartrand and F.~Harary.
\newblock Planar {Permutation} {Graphs}.
\newblock {\em Annales de l'I.H.P. Probabilit\'es et statistiques},
  3(4):433--438, 1967.

\bibitem{MR3878287}
C.~Duffy, G.~MacGillivray, P.~Ochem, and A.~Raspaud.
\newblock Oriented incidence colourings of digraphs.
\newblock {\em Discuss. Math. Graph Theory}, 39(1):191--210, 2019.

\bibitem{MR1428581}
B.~Guiduli.
\newblock On incidence coloring and star arboricity of graphs.
\newblock {\em Discrete Math.}, 163(1-3):275--278, 1997.

\bibitem{MR3163167}
R.~Janczewski, A.~Ma\l~afiejska, and M.~Ma\l~afiejski.
\newblock Interval incidence coloring of bipartite graphs.
\newblock {\em Discrete Appl. Math.}, 166:131--140, 2014.

\bibitem{KLR}
A.~Kotzig, C.~Lindner, and A.~Rosa.
\newblock Latin squares with no subsquares of order two and disjoint steiner
  triple systems.
\newblock {\em Utilitas Mathematica}, 7:287--294, 1975.

\bibitem{Kotzig1976263}
A.~Kotzig and J.~Turgeon.
\newblock On certain constructions for latin squares with no latin subsquares
  of order two.
\newblock {\em Discrete Mathematics}, 16(3):263--270, 1976.

\bibitem{McLeish}
M.~McLeish.
\newblock On the existence of latin squares with no subsquares of order two.
\newblock {\em Utilitas Mathematica}, 8:41--53, 1975.

\bibitem{Sopena}
E.~Sopena.
\newblock The incidence coloring page.
\newblock \url{https://www.labri.fr/perso/sopena}.
\newblock Page last modified on January 28, 2021.

\bibitem{WZ99}
K.~Z. W.~Wang.
\newblock {$\Delta$}-matchings and edge-face chromatic numbers.
\newblock {\em Acta Math. Appl. Sin.}, 22:236--242, 1999.

\bibitem{MR2976372}
D.~Yang.
\newblock Fractional incidence coloring and star arboricity of graphs.
\newblock {\em Ars Combin.}, 105:213--224, 2012.

\end{thebibliography}

\newpage

\end{document}